\newtheorem{defn}{Definition}[section]
\newtheorem{prop}[defn]{Proposition}
\newtheorem{lem}[defn]{Lemma}
\newtheorem{thm}[defn]{Theorem}
\newtheorem{cor}[defn]{Corollary}
\newtheorem{rem}[defn]{Remark}
\newtheorem {conj}[defn]{Conjecture}
\newcommand {\ZZ}{{\mathds Z}}
\newcommand {\NN}{{\mathds N}}
\newcommand {\C}{{\mathds C}}
\newcommand {\Q}{{\mathds Q}}
\newcommand {\R}{{\mathds R}}
\newcommand {\M}{{\mathcal M}}
\newcommand {\BB}{{\mathbf B}}
\newcommand {\bff}{{\mathbf f}}
\newcommand {\BF}{{\mathbf F}}
\newcommand {\CP}{{\mathds P}}
\newcommand {\D}{{\mathcal D}}
\newcommand {\J}{{ J}}
\newcommand{\MM}{{\mathcal {MM}}}
\newcommand{\w}{{\omega}}
\newcommand{\ngam}{{n_{\epsilon}(\gamma)}}
\def\Pic{\operatorname{Pic}}
\def\Ker{\operatorname{Ker}}
\def\Supp{\operatorname{Supp}}
\def\div{\operatorname{div}}
\def\deg{\operatorname{deg}}
\def\mod{\operatorname{mod}}
\def\Im{\operatorname{Im}}
\def\ord{\operatorname{ord}}
\def\reg{\operatorname{reg}}
\def\Hom{\operatorname{Hom}}
\def\Ext{\operatorname{Ext}}
\title{The Fundamental Group and Extensions of Motives of  Jacobians of Curves}
\author{Subham Sarkar and Ramesh Sreekantan
}
\begin{document}

\baselineskip=17pt
\maketitle

\begin{abstract}
In this paper we construct extensions of mixed Hodge structure coming from the mixed Hodge structure on the graded quotients of the group ring of the Fundamental group of a smooth projective pointed curve which correspond to the regulators of certain motivic cohomology cycles on the Jacobian of the curve essentially  constructed by Bloch and Beilinson. This leads to a new iterated integral expression for the regulator.  This is a generalisation of a theorem  of Colombo \cite{colo} where she constructed the extension corresponding to Collino's cycles in the Jacobian of a hyperelliptic curve. 

\noindent {\bf AMS Classification: 19F27, 11G55, 14C30, 14C35}

\end{abstract}

\tableofcontents
\section{Introduction}

A formula, usually called Beilinson's formula --- though independently due to  Deligne as well --- describes the motivic cohomology group of a smooth projective variety $X$  over a number field as the group of extensions in a conjectured category of mixed motives, $\MM_{\Q}$. If $i$ and $n$ are two integers then \cite{scho2},  
$$\Ext^1_{\MM_{\Q}}(\Q(-n),h^i(X))=\begin{cases} CH^n_{\hom}(X) \otimes \Q & \text{ if } i+1=2n\\ H^{i+1}_{\M}(X,\Q(n)) & \text{ if } i+1\neq 2n\end{cases}$$
Hence, if one had a way of constructing extensions in the category of mixed motives by some other method,  it would provide a way of constructing motivic cycles. 

One way of doing so is by considering the group ring of the fundamental group of the algebraic variety $\ZZ[\pi_1(X,P)]$. If $J_P$ is the augmentation ideal --- the kernel of the map from $\ZZ[\pi_1(X,P)]   \to \ZZ$ ---  then the graded pieces $J_P^a/J_P^b$ with $a<b$ are expected to have a motivic structure. These give rise to natural extensions of motives --- so one could hope that these extensions could be used to construct natural motivic cycles. 

Understanding the motivic structure on the fundamental group is complicated.  However, the Hodge structure on the fundamental group is well understood \cite{hain}.  The regulator of a motivic cohomology cycle can be thought of as the realisation of the  corresponding extension of motives as an extension in the category of mixed Hodge structures. So while we may not be able to construct motivic cycles as extensions of {\em motives} coming from the fundamental group - we can hope to construct their regulators as extensions of {\em mixed Hodge structures} (MHS)  coming from the fundamental group. 

The aim of this paper is to describe this  construction in the case of  the motivic cohomology group of the Jacobian of a curve. The first work in this direction is due to Harris \cite{harr} and Pulte \cite{pult}, \cite{hain}. They  showed that the Abel-Jacobi image of the modified diagonal cycle on the triple product of a pointed curve $(C,P)$, or alternatively the Ceresa cycle in the Jacobian  $J(C)$ of the curve, is the same as an extension class coming from $J_P/J_P^3$, where $J_P$ is the augmentation ideal in the group ring of the fundamental group of $C$ based at $P$.  

In \cite{colo}, Colombo extended this theorem to show that the regulator of a cycle in the motivic cohomology of a Jacobian of a {\em hyperelliptic} curve, discovered by Collino \cite{coll},  can be realised as an extension class coming from $J_P/J_P^4$, where here $J_P$ is the augmentation ideal of a  related curve.

In this paper we extend Colombo's result to more general curves. If $C$ is a smooth projective  curve of genus $g$  with a function $f$ with divisor $\div(f)=NQ-NR$ for some points $Q$ and $R$ and some integer $N$  and such that $f(P)=1$ for some other point $P$,  there is a motivic cohomology  cycle  $Z_{QR,P}$ in $H^{2g-1}_{\M}(J(C),\ZZ(g))$ discovered by Bloch \cite{blocirvine}.  We show that the regulator of this cycle can be expressed in terms of an extensions coming from $J_P/J_P^4$. When $C$ is hyperelliptic and $Q$ and $R$ are ramification points of the canonical map to $\CP^1$, this is Colombo's result.

A crucial step in Colombo's work is to use the fact that  the modified diagonal cycle is {\em torsion} in the Chow group $CH^2_{\hom}(C^3)$ when $C$ is a   hyperelliptic curve. This means the  extension coming from $J_P/J_P^3$ splits and hence does not depend on the base point $P$. This allows her to consider the extension for $J_P/J_P^4$. In general, that is {\em not} true --- in fact the known examples of non-torsion modified diagonal cycles come from the curves we consider - namely modular and Fermat curves. Our main contribution is to use an idea of Rabi \cite{rabi}  to show that Colombo's  arguments can be extended to work in our  case as well. As a result we have a more general situation --- which has some arithmetical applications.  

In Colombo's paper there were errors in Propositions 3.2 and 3.3 which were pointed out by a referee of an earlier version of  this paper. Hence we had to make some revisions. As it turned out the statement of the main result still holds under some restricted conditions and much of the revisions we made were to understand these conditions. Unfortunately this has made the paper a little long. 

We have the following theorem ({\bf Theorem \eqref{mainthm}}):

 \begin{thm} Let $C$ be a smooth projective  curve of genus $g_C$ over $\C$.  Let  $P$, $Q$ and $R$ be three  distinct  points such that there is a function $f_{QR}$ with $\div(f_{QR})=NQ-NR$ for some $N$ and $f_{QR}(P)=1$.  Let    $Z_{QR}=Z_{QR,P}$ be the element of the motivic cohomology group $H^{2g-1}_{\M}(J(C), \ZZ(g))$ constructed below in Section \ref{cycleconstruction}. There exists an extension class  $\epsilon^4_{QR,P}$  in $\Ext^1_{MHS}(\ZZ(-2),\wedge^2 H^1(C))$ constructed from the mixed Hodge structures associated to  the fundamental groups $\pi_1(C\backslash Q,P)$ and $\pi_1(C \backslash R,P)$ such that 
 $$\epsilon^4_{QR,P}=(2g_C+1)N \reg_{\ZZ}(Z_{QR}) $$
 in $\Ext^1_{MHS}(\ZZ(-2),\wedge^2 H^1(C ))$.  
 
 \end{thm}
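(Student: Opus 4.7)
The plan is to follow Colombo's strategy for the hyperelliptic case while accommodating the failure of the length-three piece of the group ring to split in general, employing the construction of Rabi to combine the mixed Hodge structures coming from the two fundamental groups $\pi_1(C \setminus Q, P)$ and $\pi_1(C \setminus R, P)$.  First I would recall from Hain's work the canonical mixed Hodge structure on the truncated group ring $\ZZ[\pi_1(C \setminus T, P)]/J_P^4$ for a finite subset $T$: its weight-graded pieces are built from tensor powers of $H^1(C \setminus T)$, and the extension of the weight $-4$ by the weight $-2$ piece, after the appropriate residue projections at the punctures and an antisymmetrisation, produces the class $\epsilon^4_{QR,P} \in \Ext^1_{MHS}(\ZZ(-2), \wedge^2 H^1(C))$ associated to the combination of the two fundamental groups.

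Next I would give a concrete iterated-integral representation for $\epsilon^4_{QR,P}$.  By Chen's theorem, periods of $J_P/J_P^4$ are length-three iterated integrals of differentials along loops based at $P$.  For $T = \{Q\}$ (respectively $T = \{R\}$) the admissible differentials include the holomorphic one-forms on $C$ together with $df_{QR}/f_{QR}$, which has simple poles at $Q$ and $R$ with residues $\pm N$ and satisfies $f_{QR}(P) = 1$ so that the iterated integrals have natural base points.  Writing $\epsilon^4_{QR,P}$ as the Rabi combination of iterated integrals for $T = \{Q\}$ and $T = \{R\}$ arranges that the Ceresa-type contribution common to both fundamental groups cancels, leaving an expression involving only bulk integrals of length three in forms on $C$.

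On the other side, the Beilinson regulator of $Z_{QR}$ in $H^{2g}_{\D}(J(C), \ZZ(g))$ projects, via the Lefschetz decomposition of $H^{2g-1}(J(C)) \cong \wedge^{2g-1} H^1(C)$, onto $\Ext^1_{MHS}(\ZZ(-2), \wedge^2 H^1(C))$.  I would compute this regulator concretely by representing the current associated to $Z_{QR}$ paired against a test two-form built from pairs of holomorphic differentials on $C$, pushing down along the Abel--Jacobi map to $C$ itself, and expressing the result in terms of $f_{QR}$ and its logarithmic differential.  Matching the two sides then reduces to a comparison of iterated integrals with an explicit boundary term, directly mirroring Colombo's hyperelliptic computation; the constant $N$ emerges from the residues of $df_{QR}/f_{QR}$ at $Q$ and $R$, while $(2g_C + 1)$ comes from the multiplicity of $\wedge^2 H^1(C)$ inside $\wedge^{2g-1} H^1(C)$ under Lefschetz contraction.

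The main obstacle is the precise execution of the Rabi subtraction.  In the hyperelliptic case Colombo was able to ascend from $J_P/J_P^3$ to $J_P/J_P^4$ because the modified diagonal (equivalently, Ceresa) class is torsion, so the length-two extension splits canonically and one has an unambiguous lift.  In the present setting that extension is generally non-trivial --- indeed, for modular and Fermat curves it is known to be non-torsion --- so $\epsilon^4_{Q,P}$ and $\epsilon^4_{R,P}$ individually depend on a choice of splitting.  The delicate step is to show that this dependence is the \emph{same} for both punctures, since it is controlled by the MHS on $J_P/J_P^3$ for $\pi_1(C, P)$, so that the appropriate combination defining $\epsilon^4_{QR,P}$ is canonical and lies in $\Ext^1_{MHS}(\ZZ(-2), \wedge^2 H^1(C))$ without ambiguity.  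Once this cancellation is rigorously justified, the remaining work is a careful iterated-integral manipulation that generalises Colombo's and yields the constant $(2g_C + 1)N$.
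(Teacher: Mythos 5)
Your high-level strategy is the paper's: compute the regulator of $Z_{QR,P}$ as a current, build $\epsilon^4_{QR,P}$ from the $J_P/J_P^4$ extensions of the two punctured curves via Rabi's generalised Baer difference, and match Carlson representatives. But there are two genuine gaps. First, you never address the step that the paper identifies as the actual correction to Colombo's error: the Carlson representative of $e^4_{QR,P}$ is an iterated integral in forms on $C_{QR}=C\setminus\{Q,R\}$ and is therefore a functional on $H_1(C_{QR})$, not on $H_1(C)$; one must choose a splitting of $i_*:H_1(C_{QR})\to H_1(C)$, and an arbitrary choice does not make the comparison with the regulator work. The paper constructs a specific subgroup $V=G/[G,G]$ with $G\subset\Ker(f_*)\subset\pi_1(C_{QR};P)$ generated by $\alpha_i'^N\beta_Q^{-m_i}$, together with the intermediate cover $\tilde C=X/G\to C_{QR}$ on which $q^*(df/f)$ becomes exact. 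It is only on loops $\alpha$ with $[\alpha]\in V$ (so $\int_\alpha df/f=0$) that the key identity $\int_\alpha \frac{df}{f}\psi+W(\psi)=\int_{C\setminus\gamma}\eta_\alpha\wedge\Theta+2\pi i\int_\gamma\eta_\alpha\psi$ holds, and this identity is what converts the fundamental-group side into the regulator side. Without specifying this splitting (which is determined by $f_{QR}$, not canonical in general), your ``comparison of iterated integrals with an explicit boundary term'' cannot be carried out.

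Second, your account of where the constants come from is not correct, and your proposed resolution of the splitting ambiguity would fail. The factor $(2g_C+1)$ does not arise from any multiplicity of $\wedge^2H^1(C)$ in $\wedge^{2g-1}H^1(C)$; it comes from contracting with the polarisation $\Omega=\sum_{i=1}^{2g_C}c(i)\,dx_i\otimes dx_{\sigma(i)}$ in the Carlson representative: the differences $\mu_{i\sigma(i),Q}-\mu_{i\sigma(i),R}=\tfrac{c(i)}{N}\tfrac{df}{f}$ contribute $2g_C$ terms $\int dx_k\tfrac{df}{f}$ and one further term $-\int\tfrac{df}{f}dx_k$, which the shuffle relation (together with $\int_{\alpha_j}df/f=0$) combines into $(2g_C+1)\int_{\alpha_j}\tfrac{df}{f}dx_k$. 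Likewise, the paper does not show that the dependence of $e^4_{Q,P}$ and $e^4_{R,P}$ on a choice of splitting ``cancels canonically''; instead it multiplies the relevant extensions by $N$ so that $N\cdot m^3_{QR,P}$ and $N\cdot\bff_{QR}$ actually split (via Kaenders' injectivity of $\Pic^0(C)\to\Ext(\otimes^2H^1(C),H^1(C))$ and $N(Q-R)=0$), and only then projects to define $e^4_{QR,P}$. The factor $N$ in $(2g_C+1)N\,\reg_{\ZZ}(Z_{QR})$ is a trace of this forced multiplication (and of the normalisation $\int_{i_*(\alpha_i)}dx_j=N\delta_{ij}$), not of the residues of $df_{QR}/f_{QR}$. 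A proof that tried to obtain a canonical class without the multiplication by $N$ would stall exactly where the non-torsion Ceresa class obstructs a splitting.
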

 
In other words our theorem states that the regulator of a natural cycle in the motivic cohomology group of the Jacobian of curves, 
being thought of as an extension class, is the same as  the extension class of a  natural extension of mixed Hodge structures 
coming from the fundamental group of the curve.   

The outline of the proof is as follows. In the second chapter we introduce background material on iterated integrals, regulators, motivic cycles and extensions. We then describe the cycle $Z_{QR,P}$ and give a formula for its regulator. In the next chapter we construct an extension coming  from the MHS on the fundamental group that we expect to be related to the regulator of  our explicit cycle. Finally, we prove that that is indeed the case. This is the place where there was an error in the paper of Colombo \cite{colo}. One has to restrict to a certain subspace of $H_1(C \setminus \{Q,R\})$ which is isomorphic to $H_1(C)$. There is no canonical way in which to choose this in general, but in our case the function $f_{QR}$ determines such as space.  

As a result of this we have a new  iterated  integral  formula for the regulator which is more amenable to computation. In a subsequent paper we apply this in the case of  Fermat curves to get an explicit expression for the regulator in terms of hypergeometric functions --- analogous to the works of Otsubo \cite{otsu1},\cite{otsu2}. As the primary requirement is a nice basis for the homology and the cohomology of the curve, we expect this will also work in the case of modular curve, though that case is quite well understood. 

Darmon-Rotger-Sols  \cite{DRS} have used the modified diagonal cycle to construct points on Jacobians of the curves and used the iterated integral approach to find a formula for the Abel-Jacobi image of these points. Starting with Bloch \cite{blocirvine} and later Collino \cite{coll} and Colombo \cite{colo} it has been known that these null homologous cycles degenerate to higher Chow cycles on related varieties. Recently Iyer and M\"uller-Stach \cite{iymu} have shown that the modified diagonal cycle degenerates to the kind of cycles we consider in some special cases. This degeneration can be understood from the point of view of extensions and in fact the iterated integral expression we have for the regulator show that in a rather natural way in terms of the holomorphic forms degenerating to logarithmic forms.

{\em Acknowledgements:} This work constitutes part of the PhD. thesis of the first author. We would like to thank  Najmuddin Fakhruddin, Noriyuki Otsubo, Satoshi  Kondo, Elisabetta  Colombo,  Jishnu Biswas,  Manish Kumar, Ronnie Sebastian, Ranier Kaenders, Harish Seshadri, Arvind Nair, Shreedhar Inamdar  and Suresh Nayak for their comments and suggestions. We would also like to thank the referee of an earlier version of this manuscript for pointing out numerous errors and specifically for pointing out the error in Colombo's paper which we had to rectify and the referee of this version for their comments and suggestions. 

Finally, it gives us great  pleasure to thank the Indian Statistical Institute, Bangalore  for their support while this work was done. The first author would also like to thank TIFR for their hospitality while these revisions were being made.

\section{Iterated Integrals, Cycles, Extensions and Regulators}

\subsection{Iterated Integrals}

Let $X$ be a smooth projective variety over $\C$. Let $\alpha:[0,1] \to X$ be a path and $\omega_1,\omega_2,\dots,\omega_n$ be $1$-forms on $X$. Suppose $\alpha^*(\omega_i)=f_i(t) dt$. The  {\em iterated integral of length n} of $\w_1,\w_2 \dots \w_n$ is defined to be 
$$\int_{\alpha} \omega_1\omega_2\dots \omega_n:=\int_{0 \leq t_1 \leq t_2\leq \dots \leq t_n \leq 1} f_1(t_1)f_2(t_2)\dots f_n(t_n)dt_1dt_2 \dots  dt_n.$$
An {\em iterated integral of length $\leq n$} is a linear combination of integrals of the form above with lengths $\leq n$. It is said to be a {\em homotopy functional} if it only depends on the homotopy class of the path $\alpha$.  A homotopy functional gives a functional on the group ring of the fundamental group or path space. 

Iterated integrals can be thought of as integrals on simplices and satisfy the following basic properties. Here we have only stated the results for length two iterated integrals, since that is the only type we will encounter in this paper.

\begin{lem}[Basic Properties] Let $\omega_1$ and $\omega_2$ be smooth $1$-forms on $X$ and $\alpha$ and $\beta$ piecewise smooth paths on $X$ with $\alpha(1)=\beta(0)$. Then
\begin{enumerate} 
\item $\displaystyle{\int_{\alpha \cdot \beta} \omega_1 \omega_2=\int_{\alpha} \omega_1\omega_2 + \int_{\beta} \omega_1 \omega_2 + \int_{\alpha} \omega_1 \int_{\beta} \omega_2}$
\item $\displaystyle{\int_{\alpha} \omega_1 \omega_2 + \int_{\alpha} \omega_2 \omega_1 =  \int_{\alpha} \omega_1 \int_{\alpha} \omega_2}$
\item $ \displaystyle{\int_{\alpha}  df \omega_1 = \int_{\alpha} f \omega_1 - f(\alpha(0)) \int_{\alpha} \omega_1}$
\item $\displaystyle{\int_{\alpha} \omega_1 df = f(\alpha(1))\int_{\alpha} \omega_1 -\int_{\alpha} f\omega_1}$
\end{enumerate}
\label{basicproperties}

\end{lem}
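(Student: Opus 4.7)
The plan is to reduce every one of the four identities to an elementary integration-on-the-simplex computation directly from the definition $\int_{\alpha}\omega_1\omega_2=\int_{0\le t_1\le t_2\le 1} f_1(t_1)f_2(t_2)\,dt_1\,dt_2$, where $\alpha^*\omega_i=f_i(t)\,dt$. In each case, once I expand the integral I will only need Fubini, a change of variable, and (for (3) and (4)) the fundamental theorem of calculus applied to $f\circ\alpha$, since $\alpha^*(df)=(f\circ\alpha)'(t)\,dt$.

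For (1), I parameterise $\alpha\cdot\beta$ in the standard way on $[0,1]$ (with $\alpha$ run on $[0,\tfrac12]$ and $\beta$ on $[\tfrac12,1]$, rescaled). The pulled-back integrand $F_1(t_1)F_2(t_2)$ is then piecewise: each $t_i$ either lives in $[0,\tfrac12]$ or in $[\tfrac12,1]$. Partition the simplex $\{0\le t_1\le t_2\le 1\}$ accordingly into three regions — both in the first half, both in the second, and $t_1$ in the first with $t_2$ in the second (the fourth possibility is empty by the ordering). After rescaling each piece back to $[0,1]$ these three regions produce exactly the three terms $\int_\alpha\omega_1\omega_2$, $\int_\beta\omega_1\omega_2$, and the product $\int_\alpha\omega_1\cdot\int_\beta\omega_2$.

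For (2), the shuffle relation, I observe that $\int_\alpha\omega_2\omega_1=\int_{t_1\le t_2} f_2(t_1)f_1(t_2)\,dt_1\,dt_2$, which after swapping the names of the variables equals $\int_{t_2\le t_1} f_1(t_1)f_2(t_2)\,dt_1\,dt_2$. Adding this to $\int_\alpha\omega_1\omega_2$ assembles the integral of $f_1(t_1)f_2(t_2)$ over the whole square $[0,1]^2$ (the diagonal has measure zero), which by Fubini factors as $\int_\alpha\omega_1\cdot\int_\alpha\omega_2$.

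For (3) and (4), set $g(t)=f(\alpha(t))$ so that $\alpha^*(df)=g'(t)\,dt$. In (3), I do the inner integral in $t_1$ first: $\int_0^{t_2} g'(t_1)\,dt_1 = g(t_2)-g(0)=f(\alpha(t_2))-f(\alpha(0))$. The remaining integral in $t_2$ splits as $\int_\alpha f\omega_1 - f(\alpha(0))\int_\alpha\omega_1$, giving (3). Identity (4) is symmetric: integrate first in $t_2$ over $[t_1,1]$ to get $f(\alpha(1))-f(\alpha(t_1))$, then integrate in $t_1$ to obtain $f(\alpha(1))\int_\alpha\omega_1-\int_\alpha f\omega_1$. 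There is no substantive obstacle here; the only place to be careful is the parametrisation conventions in (1), which just need to be fixed once at the start so that the three subregions of the simplex line up with the three terms on the right-hand side.
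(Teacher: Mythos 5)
Your proof is correct and complete: all four identities follow exactly as you describe from the simplex definition via the subdivision of $\{0\le t_1\le t_2\le 1\}$ in (1), the variable swap in (2), and the fundamental theorem of calculus applied to $f\circ\alpha$ in (3) and (4). The paper itself gives no argument and simply cites Hain's article; your computation is precisely the standard verification carried out in that reference, so there is nothing further to reconcile.
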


\begin{proof} This can be found in any article on iterated integrals, for instance Hain's excellent article \cite{hain}.\end{proof}

\subsection{Motivic Cohomology Cycles}
\label{motiviccycles}

Let $X$ be a smooth projective algebraic variety of dimension $g$  defined over $\C$. The motivic cohomology group $H^{2g-1}_{\M}(X,\ZZ(g))$  has the following presentation: Generators  are represented by finite sums 
$$Z=\sum_i (C_i,f_i)$$
where $C_i$ are curves  on $X$ and $f_i:C_i \longrightarrow \CP^1$ are functions on them subject to the co-cycle condition 
$$\sum_i \div(f_i)=0.$$
Relations in this group are defined as follows. If $Y$ is a surface on $X$ and  $f$ and $g$ are functions on $Y$, one has the Steinberg element  $\{f,g\}$ in $K_2(\C(Y))$, where $\C(Y)$ is the function field of $Y$. To such an element one can consider the sum, called the tame symbol of $\{f,g\}$, 
$$\tau(\{f,g\})=\sum_{W\in Y^{(1)}} (W,(-1)^{\ord_W(f)\ord_W(g)}\frac{f^{\ord_W(g)}}{g^{\ord_W(f)}}).$$
where $Y^{(1)}$ is the  collection of curves on $Y$. This is a finite sum and satisfies the co-cycle condition, hence lies in the above group.  An element is said to be $0$ in $H^{2g-1}_{\M}(X,\ZZ(g))$  if it lies in the image of the free abelian group generated by the tame symbols of elements of $K_2(\C(Y))$ for some surface $Y \subset X$. The group $H^{2g-1}_{\M}(X,\ZZ(g)) \otimes \Q$ is the same as the higher Chow group $CH^g(X,1) \otimes \Q$.

In the group $H^{2g-1}_{\M}(X,\ZZ(g))$ there are certain {\em decomposable} cycles coming from the product 
$$H^{2g-1}_{\M}(X,\ZZ(g))_{dec}= \Im \left( H^{2g-2}_{\M}(X,\ZZ(g-1) \otimes  H^1_{\M}(X,\ZZ(1))   ) \longrightarrow H^{2g-1}_{\M}(X,\ZZ(g))\right).$$ 
This is simply $CH^1(X) \times \C^*$. The group of  {\em indecomposable} cycles  is defined as the quotient 
$$H^{2g-1}_{\M}(X,\ZZ(g))_{ind}=H^{2g-1}_{\M}(X,\ZZ(g))/H^{2g-1}_{\M}(X,\ZZ(g))_{dec}.$$
In general it is not easy to find non trivial  elements in this group.

\subsection{The Cycle $Z_{QR,P}$ on  on $J(C)$}
\label{cycleconstruction}

In this section we construct a motivic cohomology cycle on $J(C)$, where $C$ is a smooth projective curve over $\C$. This was  first constructed by Bloch \cite{blocirvine} in the case when $C$ is the modular curve $X_0(37)$. The cycle is similar, in fact, generalises, the cycle constructed by Collino \cite{coll}. This section generalises the work of Colombo \cite{colo} on constructing the extension corresponding to the Collino cycle and hence many of the arguments are adapted from her paper. 

Let $C$ be a smooth projective curve defined over $\C$. Let $Q$ and $R$ be two distinct  points on $C$ such that there is a function  $f=f_{QR}$ with divisor 
$$\div(f_{QR})=NQ-NR$$
for some $N \in \NN$.  To determine the function precisely, we choose a distinct third point $P$ and assume $f_{QR}(P)=1$. 

There exist notable examples of curves where such functions can easily be found. For instance, {\em modular curves} with $Q$ and $R$ being cusps, {\em Fermat curves} with the two points being among  the `trivial' solutions of Fermat's Last Theorem, namely the points with one of the coordinates being $0$, and  {\em hyperelliptic curves} with the two points being Weierstrass points. 

Let $C^Q$ denote the image of $C$ under the map $C \rightarrow J(C)$ given by $x \rightarrow x-Q$. Similarly, let $_RC$ denote the image of $C$ under the map $x \rightarrow R-x$ and let $f^Q$ and $_Rf$ denote the function $f$ being considered as a function on $C^Q$ and $_RC$ respectively. 

Consider the cycle in $J(C)$ given by 
$$Z_{QR,P}=(C^Q,f^Q)+(_RC,_Rf)$$ 
we have 
$$\div_{C^Q} (f^Q) + \div_{_RC}(_Rf)=N(0)-N(R-Q) + N(R-Q)-N(0)=0.$$
Hence the cycle $Z_{QR,P}$ gives an element of $H^{2g-1}_{\M}(J(C),\ZZ(g))$.

\subsection{Regulators}

Let $X$ be a smooth projective  variety of dimension $g$  over $\C$. The regulator map of Beilinson is a  map from the motivic cohomology group to the Deligne cohomology group.
$$\reg_{\ZZ}: H^{2g-1}_{\M}(X, \ZZ(g)) \rightarrow H^{2g-1}_{\D}(X, \ZZ(g))= \frac {(F^1H^2(X,\C))^*}{H_2(X,\ZZ(1))}.$$
where $*$ denotes the $\C$-linear dual and $F^{\bullet}$ denotes the Hodge filtration. The group $H^{2g-1}_{\D}(X,\ZZ(g))$ is a generalised torus. 

The map is defined as follows \cite{coll}: Let   $Z=\sum_i (C_i,f_i)$ be a cycle   in $H^{2g-1}_{\M} (X,\ZZ(g))$, so $C_i$ are curves on $X$ and $f_i$ are functions on them satisfying the cocycle condition. Let $[0,\infty]$ denote the positive real axis in $\CP^1$ and $\gamma_i=f_i^{-1}([0,\infty])$. Then $\sum_i \div(f_i)=0$ implies that the $1$-chain $\sum_i \gamma_i$ is closed and  in fact torsion.  If $H_1(X,\ZZ)$ has no torsion -- as is in the case of the Jacobian of  curves, it is exact.  Assuming that, we have  
$$\sum_i \gamma_i=\partial(D)$$
for some  $2$-chain $D$. For   a closed $2$-form  $\omega$ whose cohomology class lies in $F^1H_{DR}^2(X,\C)$,
\begin{equation}
\reg_{\ZZ}(Z)(\omega):=\sum_i \int_{C_i \setminus \gamma_i } \log(f_i) \omega + 2\pi i  \int_D \omega.
\label{regulatorformula}
\end{equation}
Here $C_i\setminus \gamma_i$ is the Riemann surface with boundary obtained as follows. Let $n_{\epsilon}(\gamma_i)$ be an open tubular neighbourhood of $\gamma_i$ in $C_i$ which is homeomorphic to $(-\epsilon,\epsilon) \times \gamma_i$. $C_i\backslash n_{\epsilon}(\gamma_i)$ is a closed subset of $C_i$ with the structure of a manifold with boundary. The boundary $\partial(C_i \setminus n_{\epsilon}(\gamma_i))$ is made up of two copies of $\gamma_i$ with opposite orientation as well as $(-\epsilon,\epsilon) \times \gamma_i(0)$ 
and $(-\epsilon,\epsilon) \times \gamma_i(1)$ with opposite orientations. $C_i \setminus \gamma_i$ is the manifold obtained by letting $\epsilon \rightarrow  0$.  
For a decomposable element $(C,a)$, where $a \in \C^*$,  the regulator is particularly simple:
$$\reg_{\ZZ}((C,a))(\omega)=\int_{C} \log(a)\omega=\log(a) \int_C \omega.$$

\subsection{The Regulator of $Z_{QR,P}$} 

Let $Z_{QR,P}$ be the motivic cohomology cycle in $H^{2g-1}_{\M}(J(C), \ZZ(g))$.  We now obtain a formula for its regulator.  The regulator is a current on forms in $F^1(H^2(J(C)_{\C}))$. Since $H^2(J(C))=\wedge^2 H^1(C)$ elements are of the form $\phi \wedge \psi$ where $\phi$ and $\psi$ are closed $1$-forms on $C$ and one of $\phi$ or $\psi$ is of type $(1,0)$. We have the following theorem: 
 \begin{thm} Let $Z_{QR,P}$ be the motivic cohomology cycle in  $H^{2g-1}_{\M}(J(C),\ZZ(g))$,  $\phi$ and  $\psi$ two closed harmonic  $1$-forms in  $H^1(J(C))=H^1(C)$ with $\psi$ holomorphic. Then 
 $$
\reg_{\ZZ}(Z_{QR,P})( \phi \wedge \psi) = 2 \int_{C\setminus \gamma} \log(f)\phi \wedge \psi   + 2\pi i  \int_{\gamma} (\phi\psi -  \psi\phi )
\label{regform}
$$
\end{thm}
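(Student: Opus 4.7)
The plan is to apply the general regulator formula \eqref{regulatorformula} to $Z_{QR,P}=(C^Q,f^Q)+({_RC},{_Rf})$ and identify each of the resulting pieces in terms of data on $C$. First choose a $2$-chain $D\subset J(C)$ with $\partial D=\gamma^Q+{_R\gamma}$, which exists because $H_1(J(C),\ZZ)$ is torsion-free and I will verify below that $\gamma^Q\cdot{_R\gamma}$ is null-homologous. Then \eqref{regulatorformula} reads
\[
\reg_{\ZZ}(Z_{QR,P})(\phi\wedge\psi)=\int_{C^Q\setminus\gamma^Q}\log(f^Q)\,\phi\wedge\psi+\int_{{_RC}\setminus{_R\gamma}}\log({_Rf})\,\phi\wedge\psi+2\pi i\int_D\phi\wedge\psi.
\]
Here $\phi\wedge\psi$ denotes the translation-invariant $2$-form on $J(C)$ representing the given class in $\wedge^2H^1(C)=H^2(J(C))$; since $C$ has real dimension $2$, its restriction to $C^Q$ and ${_RC}$ is a genuine top-degree form and the first two integrals make sense.

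For the first two terms I would pull back via the isomorphisms $i^Q\colon C\to C^Q$, $x\mapsto x-Q$, and ${_Ri}\colon C\to{_RC}$, $x\mapsto R-x$; both are holomorphic and hence orientation-preserving. Translation invariance of $\phi,\psi$ on $J(C)$ gives $(i^Q)^*\phi=\phi$ and $(i^Q)^*\psi=\psi$, while the factorization ${_Ri}=[-1]\circ i_R$ together with the fact that $[-1]^*$ acts by $-1$ on $H^1(J(C))$ gives $({_Ri})^*\phi=-\phi$ and $({_Ri})^*\psi=-\psi$, so $({_Ri})^*(\phi\wedge\psi)=\phi\wedge\psi$ after the two sign flips cancel. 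Combined with $(i^Q)^*f^Q=f$, $({_Ri})^*{_Rf}=f$, and the fact that both $\gamma^Q$ and ${_R\gamma}$ pull back to $\gamma$ under these maps, each of the first two integrals equals $\int_{C\setminus\gamma}\log(f)\,\phi\wedge\psi$, giving the coefficient $2$ in the theorem.

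For the boundary term, I would first show that $\alpha:=\gamma^Q\cdot{_R\gamma}$ is null-homotopic in $J(C)$. Let $\tilde J=H^0(C,\Omega^1)^*$ be the universal cover and let $w=\int_\gamma\in\tilde J$ be the Abel--Jacobi displacement of $\gamma$. The lift of $\gamma^Q$ starting at a chosen preimage $\widetilde{R-Q}$ ends at $\widetilde{R-Q}+w$; because $({_Ri})^*$ negates $1$-forms, the subsequent lift of ${_R\gamma}$ has displacement $-w$ and returns to $\widetilde{R-Q}$. Hence $\alpha$ bounds a disk $\tilde D$ in $\tilde J$, whose image we take for $D$. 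Writing $\tilde\phi=dG$ and $\tilde\psi=dH$ on $\tilde J$ with $G(\tilde\alpha(0))=0$, Stokes' theorem gives
\[
\int_D\phi\wedge\psi=\int_{\tilde D}dG\wedge dH=\int_{\tilde\alpha}G\,dH=\int_\alpha\phi\,\psi,
\]
where the last equality uses the iterated-integral convention of Section 2.1. Applying Lemma \ref{basicproperties}(i) to the concatenation $\gamma^Q\cdot{_R\gamma}$ and the pullback identities above (noting that the two sign flips cancel inside the length-two iterated integral over ${_R\gamma}$, so $\int_{{_R\gamma}}\phi\psi=\int_\gamma\phi\psi$, while $\int_{{_R\gamma}}\psi=-\int_\gamma\psi$), I get $\int_\alpha\phi\psi=2\int_\gamma\phi\psi-\int_\gamma\phi\cdot\int_\gamma\psi$, which the shuffle relation Lemma \ref{basicproperties}(ii) reduces to $\int_\gamma(\phi\psi-\psi\phi)$.

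Assembling the three contributions yields the claimed identity. The one delicate step is the Stokes identification $\int_D\phi\wedge\psi=\int_\alpha\phi\psi$: it requires passing to the universal cover so that $\phi$ and $\psi$ become exact and then matching orientation and integration conventions with those of Section 2.1. I expect this to be the only place where care is needed, with everything else reducing to routine pullback computations and the algebraic properties of iterated integrals recorded in Lemma \ref{basicproperties}.
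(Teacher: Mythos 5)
Your proposal is correct and its overall skeleton is forced to coincide with the paper's: plug $Z_{QR,P}=(C^Q,f^Q)+({_RC},{_Rf})$ into the regulator formula \eqref{regulatorformula}, identify each of the two curve integrals with $\int_{C\setminus\gamma}\log(f)\,\phi\wedge\psi$ by pulling back along $x\mapsto x-Q$ and $x\mapsto R-x$ (the sign from $[-1]^*$ cancelling on the $2$-form), and convert $2\pi i\int_D\phi\wedge\psi$ into the iterated integral $\int_\gamma(\phi\psi-\psi\phi)$. Where you genuinely diverge is in the boundary term. The paper takes $D$ to be Colombo's explicitly parametrized $2$-chain $F_i(s,t)=\gamma^i(a(s,t))-\gamma^i(b(s,t))$ with $\partial D_i=\gamma^{i,Q}\cdot{_R\gamma^i}$, and then simply cites her computation of $\int_{D_i}\phi\wedge\psi$ as an iterated integral. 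You instead observe that each loop $\gamma^{i,Q}\cdot{_R\gamma^i}$ lifts to a closed loop in the universal cover $\tilde J$ (the Abel--Jacobi displacements $w$ and $-w$ cancel), bound it by any disk there, and use exactness of translation-invariant forms on $\tilde J$ plus Stokes to get $\int_D\phi\wedge\psi=\int_\alpha\phi\psi$, after which the concatenation and shuffle relations of Lemma \ref{basicproperties} give $\int_\gamma(\phi\psi-\psi\phi)$. This is self-contained and avoids the explicit homotopy $b(s,t)$ entirely; what it gives up is a canonical choice of $D$ --- your disk may differ from Colombo's chain by a $2$-cycle, so your formula agrees with the paper's only modulo periods $2\pi i\int_{H_2}\omega$, which is harmless since the regulator lives in $(F^1H^2)^*/H_2(X,\ZZ(1))$, but should be said explicitly since the theorem is stated as an equality of numbers.

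Two small repairs. First, $\gamma=f^{-1}([0,\infty])$ is a union of $N$ paths $\gamma^i$ from $Q$ to $R$, so $\gamma^Q\cdot{_R\gamma}$ is not a single loop; run your lifting and Stokes argument on each $\gamma^{i,Q}\cdot{_R\gamma^i}$ separately and sum over $i$, exactly as the paper does with its $D_i$. Second, the lift of $\gamma^{i,Q}$ should start at a preimage of $0=Q-Q$ (its actual initial point), not of $R-Q$; the displacement bookkeeping is unaffected.
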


\begin{proof}
Recall that $f=f_{QR}$ is a function on $C$ with divisor $NQ-NR$ for some $N$. Let  $\omega=\phi \wedge \psi$ and $\gamma=f^{-1}([0,\infty])$. As $f$ is of degree $N$, $\gamma$ is the union of $N$ paths each lying on a different sheet with only the points $Q$ and $R$ in common. We will denote them by $\gamma^i$, $1\leq i \leq N$. Each $\gamma^i$ is a path from $Q$ to $R$.  Let $\gamma^Q$ and $_R\gamma$ denote the path $\gamma$ on $C^Q$, $_RC$ respectively and similarly for the components $\gamma^i$. Then from the co-cycle condition one has 
$$  \gamma^Q \cdot _R\gamma=\partial(D)$$
where $D$ is a $2$-chain on $J(C)$. 

From equation \eqref{regulatorformula}  one has 
\begin{equation}
\reg_{\ZZ}(Z_{QR,P})(\omega )=\int_{C^Q \setminus \gamma^Q} \log(f_Q ) \omega + \int_{_RC \setminus _R\gamma} \log(_Rf) \omega+ 2\pi i \int_{D} \omega.
\label{regfor1}
\end{equation}
Our aim is to find a more explicit expression for $\reg_{\ZZ}(Z_{QR,P})$. For this we need an explicit description of $D$. This was done by Colombo \cite{colo}, Lemma 1.2.
\begin{lem} Let 
\[
 a(s,t)=t \hspace{2cm} {\rm and }  \hspace{2cm}  b(s,t)=\frac{t(1-s)}{1-s(1-t)}. \]
Define $F_i:[0,1] \times [0,1] \longrightarrow J(C)$ by 
$$F_i(s,t)=\gamma^i(a(s,t)) - \gamma^i(b(s,t))$$
for $1 \leq i \leq N$ and let 
$$D_i=\Im(F_i).$$
Then, orienting counterclockwise, 
\[
\partial(D_i)= \gamma^{i,Q} \cdot  _R\gamma^{i}.
\]
In particular,  if $D=\cup_{i=1}^{N} D_i$ then 
\[
\partial(D)=  \gamma^Q \cdot _R\gamma
\]
\end{lem}

\begin{proof} The proof is essentially identical to Colombo's Lemma 1.2 - the only change is that she does it for $N=2$ - so we do not repeat it here.

\end{proof}
We can compute the last integral as an iterated integral as follows. 

\begin{lem} Let $\phi$ and $\psi$ be closed harmonic $1$-forms on $C$  and let $D_i$ be a disc as   in the above lemma. Then 
$$\int_{D_i} \phi \wedge \psi = \int _{\gamma^{i,Q}}  \phi \psi - \int_{{_R\gamma}^i} \psi \phi=\int_{\gamma} (\phi \psi - \psi \phi)$$
\label{secondintegral}
\end{lem}

\begin{proof} This again is a slightly modified version of Colombo \cite{colo}, Lemma 13.  
 \end{proof}
 
 This completes the proof of Theorem \ref{regform}.

\section{Extensions}

As stated in the introduction, conjecturally, there is a canonical  description of the motivic cohomology group as an extension in the category of mixed motives. From now on, $\Ext$ will denote $\Ext^1$. Further, we will use $H^*(X)$ to denote the group $H^*(X(\C),\ZZ)$, the singular (Betti) cohomology group with integral coefficients  and $H^*(X)_A$ to denote $H^*(X)\otimes_{\ZZ} A$,  where $A$ is typically $\Q, \R$ or $\C$. 

In our case, if one has a suitable category of mixed motives over $\Q$,  $\MM_{\Q}$,  one expects  for a variety $X$ \cite{scho2} and $i,n$ non-negative numbers  with $i<2n-1$, 
\begin{equation}
H^{i+1}_{\M} (X,\Q(n))\simeq \Ext_{\MM_{\Q}} (\Q(-n),h^i(X)). 
\label{extmot}
\end{equation}
where $\Q(-n)$ denotes the twist of the Tate motive and $h^i(X)$ denotes the motive whose Hodge realisation is $H^i(X)$. 

One {\em knows} that the Deligne cohomology can be considered as an extension in the category of integral mixed Hodge structures, 
$$H^{i+1}_{\D} (X,\ZZ(n))\simeq \Ext_{MHS} (\ZZ(-n),H^i(X)) $$

Assuming \eqref{extmot} holds at the level of integer coefficients,  the regulator map  above then has a canonical description as the map induced by the realisation map from the category of mixed motives to the  category of mixed Hodge structures, 
$$\Ext_{\MM_{\Q}} (\ZZ(-n),h^i(X)) \stackrel{\reg_{\ZZ}}{\longrightarrow} \Ext_{MHS}(\ZZ(-n),H^i(X)).$$

\subsection{Extensions of Mixed Hodge structures coming from the Fundamental Group}

The key point of this paper is that,  in some cases, one can also obtain extensions of mixed Hodge structures in other ways. For instance, if $(X,P)$ is a pointed algebraic variety, it was shown by Hain \cite{hain}  that the graded quotients $J_P^a/J_P^b$, with $a\leq b$,  where $J_P$ is the augmentation ideal of the  group ring of the fundamental group $\ZZ[\pi_1(X,P)]$, carry mixed Hodge structures. Hence natural exact sequences involving them lead to extensions of mixed Hodge structures. 

Our aim is to first construct some natural motivic cohomology cycles in the case when $X=J(C)$, the Jacobian of a curve of genus $g=g_C$. Their regulators will give rise to extensions of mixed Hodge structures.  We will show that there are natural extensions of mixed Hodge structures coming from the Hodge structure on the graded pieces of  $\ZZ[\pi_1(C,P)]$  for some suitable point $P$  which give the {\em same}  extensions.  In particular, since the constructions can be carried out in at the level of mixed motives, if we had a good category of mixed motives  the  cycle {\em itself} would be an extension in the conjectured category of mixed motives coming from the fundamental group.

\subsection{The Mixed Hodge structure on the Fundamental Group} 

Let $C$ be a smooth projective  curve over $\C$  and $P$, $Q$ and $R$ be three distinct  points on $C$. 
Consider the open curve $C_Q=C -\{Q\}$. Let $\ZZ[\pi_1(C_Q,P)]$ be the group ring of the fundamental group of $C_Q$ based at $P$.  Let $J_{Q,P}:=J_{C_Q,P}$ denote the augmentation ideal 
$$J_{Q,P}:=J_{C_Q,P}=\Ker \{ \ZZ[\pi_1(C_Q,P)] \stackrel{\deg}{\longrightarrow} \ZZ \}.$$
Let $H^0({\mathcal B}_r(C_Q;P))$ denote the $F$-vector space, where $F$ is $\R$ or $\C$, of homotopy invariant iterated integrals of length $\leq r$.  Chen\cite{chen} showed that    
$$H^0({\mathcal B}_r(C_Q;P)) \simeq  \Hom_{\ZZ}(\ZZ[\pi_1(C_Q,P)]/J_{C_Q.P}^{r+1},F)$$
under the map 
$$ I \longrightarrow   I(\gamma)=\int_{\gamma} I .$$
Using this Hain \cite{hain} was able to put  a natural  mixed Hodge structure on the graded pieces $J_{Q,P}/J_{Q,P}^r$. 

\subsection{The Extension $E^3_{Q,P}$}

From this point on, we will use the following notation. For an extension $E$ of mixed Hodge structures, 
$$E:0 \longrightarrow  B \longrightarrow H \longrightarrow A \longrightarrow 0$$
we use $m$ to denote its class in $\Ext^1_{MHS}(A,B)$ and $H$ to denote the middle term. We will also use the notation $N\cdot E$ to denote $N$ times the extension with respect to the Baer sum, 
use $N \cdot m$ to denote its class of this extension in the $\Ext$ group and $N\cdot H$ to denote its middle term .

For $r\geq3$, one can consider the extensions of mixed Hodge structures 
$$E^r_{Q,P}: 0 \longrightarrow (J_{Q,P}/J_{Q,P}^{r-1})^* \longrightarrow (J_{Q,P}/J_{Q,P}^{r})^*\longrightarrow (J_{Q,P}^{r-1}/J_{Q,P}^{r})^*\longrightarrow 0$$
where for a module $M$, $M^*=\Hom(M,\ZZ)$. 

The simplest non-trivial case is when $r=3$.  In this case $(J_{Q,P}/J_{Q,P}^2)^* \simeq H^1(C_{Q})\simeq H^1(C)$ and  $(J_{Q,P}^2/J_{Q,P}^3)^* \simeq  \otimes^2 H^1(C)$ and the exact sequence becomes 
$$E^3_{Q,P}: 0 \longrightarrow H^1(C) \longrightarrow (J_{Q,P}/J_{Q,P}^3)^* \longrightarrow \otimes^2 H^1(C) \longrightarrow 0.$$
Hence $E^3_{Q,P}$ gives an element $m^3_{Q,P}$  in $\Ext(\otimes^2 H^1(C),H^1(C))$. A similar construction with $R$ in the place of $Q$ gives us the extension $E^3_{R,P}$, which also lies in the same $\Ext$ group.

There is  a surjection $\cup:\otimes^2 H^1(C) \longrightarrow  H^2(C)  \simeq \ZZ(-1)$ coming from the cup product. Let $K$ be the kernel of this map. The exact sequence  of Hodge structures 
$$0 \longrightarrow K \longrightarrow \otimes^2 H^1(C) \stackrel{\cup}{\longrightarrow} \ZZ(-1)  \longrightarrow 0$$
splits over $\Q$  but {\em not} over $\ZZ$.  This happens as follows: There is a bilinear form \cite{kaen}
$$b:\otimes^2 H^1(C) \times \otimes^2 H^1(C) \longrightarrow \ZZ$$
defined by 
$$b(x_1 \otimes x_2,y_1 \otimes y_2)=(x_1 \cup y_2) \cdot (x_2 \cup y_1).$$
Let $S$ denote the orthogonal complement of $K$ in $\otimes^2 H^1(C)$ with respect to this bilinear form. Then, under the cup product  $S$ projects to  $2g_C \ZZ(-1)$ where $g_C$ is the genus of $C$  and 
$$\otimes^2 H^1(C)_{\Q}=K_{\Q} \oplus S_{\Q}.$$
Let $\bar{m}^3_{Q,P}$ denote the class  in $\Ext_{MHS}(S,H^1(C))$ corresponding to the extension 
$$0 \longrightarrow H^1(C) \longrightarrow \bar{E}^3_{Q,P} \longrightarrow S \longrightarrow 0$$
obtained by restricting $E^3_{Q,P}$ to the extension of $S$ by $H^1(C)$. From Kaenders \cite{kaen} one knows there is a covering map of complex tori, 
$$\Ext(\otimes^2 H^1(C),H^1(C)) \stackrel{\phi}{\longrightarrow} \Ext(K \oplus S, H^1(C))=\Ext(K,H^1(C)) \times \Ext(S,H^1(C)).$$
It is well known that $\Ext(S,H^1(C))=\Ext(\ZZ(-1),H^1(C))\simeq \Pic^0(C)$.  To understand the other term, from the work of
 Hain \cite{hain}, Pulte \cite{pult}, Kaenders \cite{kaen} and Rabi \cite{rabi} one has the following theorem 
\begin{thm} The image of the  class  $m^3_{Q,P}$ of $E^3_{Q,P}$ in $\Ext(\otimes^2 H^1(C) ,H^1(C))$  is given by 
 $$\phi(m^3_{Q,P})=(m^3_P, \bar{m}^3_{Q,P})$$ 
 where $m^3_P \in \Ext(K,H^1(C))$ depends only on $P$ and $\bar{m}^3_{Q,P}$ is given by 
 
$$2 g_C Q  - 2P -\kappa_C \in \Pic^0(C)$$
where $\kappa_C$ is the canonical divisor of  $C$ and $g_C$ is the genus of $C$. 
\label{moddiag}
\end{thm}

Recall that in the group $\Ext$,  addition is given by the Baer sum. We will denote this by $\oplus_B$  (or $\ominus_B$ if we are taking differences). Let $m^3_{QR,P}$ denote the Baer difference $m^3_{Q,P} \ominus_{B} m^3_{R,P}$.
\begin{lem} Under the hypothesis that there is a function with divisor $\div(f_{QR})=NQ-NR$  the extension class $m^3_{QR,P}$ is torsion in $\Ext(H^1C) \otimes H^1(C),H^1(C))$.  Precisely, 
$$N\cdot H^3_{QR,P} \simeq H^1(C) \bigoplus \otimes^2 H^1(C)$$
where by $N\cdot H^3_{QR,P}$ we mean the middle term of the exact sequence obtained by adding the sequence $E^3_{QR,P}$  to itself $N$-times using the Baer sum. 
\label{splitting}
\end{lem}

\begin{proof} This follows from \cite{kaen}[Theorem 2.5] which states that the map 
$$\Pic^0(C) \longrightarrow \Ext( H^1(C) \otimes H^1(C),H^1(C))$$ 
given by 
$$Q-R \longrightarrow m^3_{Q,P}-m^3_{R,P}$$
is well defined and  injective. Hence, since $N(Q-R)=0$ in $\Pic^0(C)$, $N(m^3_{Q,P}-m^3_{R,P})=N(m^3_{QR,P})=0$ in $\Ext( H^1(C) \otimes H^1(C),H^1(C))$. 
\end{proof}

A consequence of this is that there is a morphism of integral mixed Hodge structures 
$$r_{3}: N \cdot H^3_{QR,P} \longrightarrow  H^1(C)$$
given by the projection.

\begin{rem}
This extension represents the class $Q-R$, at least up to a integral multiple, and is hence the first example of the theme of this paper - namely the Abel-Jacobi image of a  null-homologous cycle is described in terms of  extensions coming from the fundamental group. 
\end{rem}

\end{proof}

\subsection{The Extensions $E^4_{Q,P}$ and $E^4_{R,P}$}

From the work of Hain, Pulte, Harris and others one knows that the class $m^3_P$ in $\Ext(K,H^1(C))$ corresponds to the extension of mixed Hodge structures determined by the Ceresa cycle in $J(C)$, or alternately,  the modified diagonal cycle in $C^3$. We would like to construct a similar class corresponding to the motivic cohomology cycle $Z_{QR,P}$.  To that end,  we now consider, with $C, P,Q$ and $R$ as before, the extension corresponding to $r=4$
$$E_{Q,P}^4: 0 \longrightarrow (J_{Q,P}/J_{Q,P}^3)^*  \longrightarrow (J_{Q,P}/J_{Q,P}^4)^* \longrightarrow (J_{Q,P}^3/J_{Q,P}^4)^* \longrightarrow 0.$$
We have that $(J_{Q,P}^3/J_{Q,P}^4)^*\simeq \otimes^3 H^1(C)$ and this does not depend on $P, Q$ or $R$. However, from Theorem \ref{moddiag},  $(J_{Q.P}/J_{Q,P}^3)^*$ depends on $Q$ and $P$. Similarly  $(J_{R.P}/J_{R,P}^3)^*$ depends on $R$ and $P$.  Hence we get classes in $\Ext(\otimes^3 H^1(C),(J_{Q.P}/J_{Q,P}^3)^*)$ and $\Ext(\otimes^3 H^1(C),(J_{R.P}/J_{R,P}^3)^*)$ -- which are different groups - hence we cannot take their difference. 

When $C$ is hyperelliptic  the extension classes $m^3_{Q,P}$ and $m^3_{R,P}$  are 2-torsion in  $\Ext(\otimes^2 H^1(C),H^1(C))$. Hence one gets two classes 
$$2m^4_{Q,P},  2m^4_{R,P}  \in \Ext(\otimes^3 H^1(C),\otimes^2 H^1(C) \oplus H^1(C))$$
and one can project to get two classes $e^4_{Q,P}$ and $e^4_{R,P}$ in $\Ext(\otimes^3 H^1(C), H^1(C))$.  Colombo \cite{colo} shows that the class 
$$e^4_{QR,P}=e^4_{Q,P} \ominus_B e^4_{R,P} \in \Ext(\otimes^3 H^1(C), H^1(C))$$
corresponds to the extension determined by  the  cycle  $Z_{QR}$ --- after pulling back and pushing forward with some standard maps.

Unfortunately, in general the extension classes  $m^3_{Q,P}$ and $m^3_{R,P}$ are {\em not} torsion in the $\Ext$ group. They correspond to the instances where the Ceresa cycle is non-torsion --- which is the generic case. In fact,  the instances where it is  known that the cycles are non-torsion are precisely the cases we have in mind --- modular curves and Fermat curves \cite{harr},\cite{blocspec}. Hence we cannot use this argument immediately. However, since we know from Lemma \ref{splitting} that their difference $m^3_{QR,P}$ is torsion, we would like to get an extension of the form  
$$0 \longrightarrow H^3_{QR,P}\otimes \Q \longrightarrow ``H^4_{QR,P}" \otimes \Q \longrightarrow \otimes^3 H^1(C)\otimes \Q \longrightarrow 0$$
where $``H^4_{QR,P}"$ is the middle term of a sort of  generalised Baer difference of the two extensions $E^4_{Q,P}$ and $E^4_{R,P}$. We could then  push-forward this extension using the splitting to get a class in $\Ext(\otimes^3 H^1(C)_{\Q},H^1(C)_{\Q})$. We cannot simply consider $E^4_{QR,P}=E^4_{Q,P} \ominus_{B} E^4_{R,P}$ as the two extensions lie in different $\Ext$ groups.   So we have to consider a generalisation of Baer sums to not necessarily exact sequences which we came across in a paper of Rabi \cite{rabi}.

\subsection{The Baer sum}

This is well known but we recall it to fix notation in order to describe Rabi's work. Recall that if  we have two exact sequences of modules 
$$\begin{CD}
E_j: 0 @>>> A @>f_j>> B_j @>p_j>> C @>>>0 
\end{CD}$$
for $j \in \{1,2\}$, the Baer difference  $E_1 \ominus_B E_2$  is constructed as follows. We have 
$$\begin{CD}
0 @>>> A \oplus A @>f_1\oplus f_2>> B_1 \oplus B_2 @>p_1\oplus p_2>> C\oplus C @>>> 0.
\end{CD}
$$  
Let  $\psi: B_1\oplus B_2 \longrightarrow C$ be the map 
$$\psi(b_1,b_2)=p_1(b_1)-p_2(b_2)$$
and let  $H=\Ker(\psi)=\{(b_1,b_2)| \; p_1(b_1)=p_2(b_2)\}$.  Let $D$ be the image of $\tilde{f}: A \longrightarrow A \oplus A  \longrightarrow H$
$$\tilde{f}(a)=(f_1(a),f_2(a))$$ 
Let $B=H/D$. The map $f: A \oplus A \longrightarrow B$ given by 
$$f(a_1,a_2)=(f_1(a_1),f_2(a_2))$$
factors through $(A \oplus A)/A \simeq A$ and so one has a map $\bar{f}:A \longrightarrow B$, 
$$a \longrightarrow (f_1(a),0)=(0,-f_2(a))$$ 
and an exact sequence 
$$\begin{CD}
0 @>>> A @>\bar{f}>> B@>p_1( or \;p_2) >> C @>>> 0
\end{CD}
$$
The class of this exact sequence  in $\Ext(C,A)$ is the Baer difference  $E_1 \ominus_B E_2$.  The Baer sum $E_1 \oplus_B E_2$  is the sequence obtained when one of the maps $f_2$ or $p_2$ is replaced by its negative.  The Baer sum is essentially the push-out over $A$ in the category of modules.

\subsection{Rabi's  generalisation} Now suppose we  have diagrams of the  following type:
\[
\begin{CD}
 &&0\\&&@VVV\\
 &&A_1\\
 &&@VVi_jV\\
 0 @>>> B_1^j @>f_j>> B_2^j @>p_j>> B_3 @>>>0  \\
&&@VV\pi_j V \\
&&C_1\\
&&@VVV\\
&&0
\end{CD}
\]
 where the vertical and horizontal sequences are exact for  $j\in \{1,2\}$.   Let $E_j$ denote the horizontal exact sequences:
$$\begin{CD}
E_j: 0 @>>> B_1^j @>f_j>> B_2^j @>p_j>> B_3 @>>>0.
\end{CD}$$
 We  would like to take the Baer  difference of the $E_j$ --- but since they do not lie in the same $\Ext$ group we cannot quite do that. However, we can still salvage something. 
 
One gets two types of extension classes in $\Ext$ groups which do not depend on $j$. The vertical exact sequences give  classes  in $\Ext(C_1,A_1)$. We can form their Baer difference to get an exact sequence 
$$
\begin{CD} 
0 @>>> A_1 @>>> \BB_1 @>>> C_1 @>>>0.
\end{CD}
$$
The horizontal exact sequences give extensions in $\Ext(B_3,B_1^j)$. These depend on $j$ but their push forward under  $\pi_j$ give classes ${\mathbf f}_{B_2^j}$ in  $\Ext(B_3,C_1)$.  

Define $\BB_2$ as follows: Let $H_2=\Ker(\psi)$, where $\psi$ is the `difference'  map 
 $$ \psi: B_2^1\oplus B_2^2 \longrightarrow B_3 $$
 $$ \psi((b_2^1,b_2^2)) = (p_1(b_2^1)-p_2(b_2^2))$$
 Let $D_2$ be the image of the map 
 $$A_1 \longrightarrow B_1^1 \oplus B_1^2 \longrightarrow H_2$$
 $$ a \longrightarrow (f_1(i_1(a)),f_2(i_2(a)))$$
Define $\BB_2=H_2/D_2$. We call this the {\em generalised} Baer difference of $E_1$ and $E_2$ and denote it by $\tilde{\ominus}_B$.  Observe that this is almost the Baer difference  of $E_1$ and $E_2$ in the sense that if $B_1=B_1^1=B_1^2$, then we could take the difference  in $\Ext(B_3,B_1)$. Since that is not the case, we do the best we can ---  we take the difference  of the {\em inexact} sequences 
 $$
 \begin{CD}
 0 @>>>A_1 @>>> B_2^j @>>>B_3 @>>> 0.
 \end{CD}
 $$
 As a result of this one has a complex
 $$
 \begin{CD}
 0 @>>> \BB_1 @>f_1 \oplus f_2 >> \BB_2 @>p_1( or \; p_2)>> B_3 @>>>0
 \end{CD}
 $$
 However, this complex is {\em not} exact since $\Ker(p_1)$ is  larger than $(f_1\oplus f_2)(\BB_1)$.  The next lemma describes this difference. 
 \begin{lem}[Rabi\cite{rabi}] Let $\BF=\BF_{B^1_2 \tilde{\ominus}_B B^2_2} = \BB_2/\BB_1$. Then one has the following diagram, in which the horizontal and vertical sequences are exact. 
 $$
 \begin{CD}
 &&&&&&0\\
 &&&&&&@VVV\\
 &&&&&&C_1\\
 &&&&&&@VV\phi V\\
 0 @>>> \BB_1 @>f>> \BB_2 @>\eta>> \BF @>>>0  \\
&&&&&&@VV\bar{p}V \\
&&&&&&B_3\\
&&&&&&@VVV\\
&&&&&&0
\end{CD}
 $$
 \label{rabilemma}
\end{lem}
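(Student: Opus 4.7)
The plan is to construct the three structural maps $f: \BB_1 \hookrightarrow \BB_2$, $\bar p: \BF \twoheadrightarrow B_3$, and $\phi: C_1 \to \BF$, and then verify the two exact sequences by diagram chase. The horizontal sequence $0 \to \BB_1 \to \BB_2 \to \BF \to 0$ is essentially tautological once $f$, induced by $(b_1^1, b_1^2) \mapsto (f_1(b_1^1), f_2(b_1^2))$, is shown to be well-defined and injective. Well-definedness uses $p_j \circ f_j = 0$ (so the image lies in $H_2$) together with the fact that generators $(i_1(a), i_2(a))$ of the Baer-difference equivalence on $\BB_1$ map to generators of $D_2$; injectivity is immediate from injectivity of $f_1$ and $f_2$. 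Exactness of the row then follows from the very definition $\BF := \BB_2 / \BB_1$.

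For the vertical maps, I would define $\bar p$ by $(b_2^1, b_2^2) \mapsto p_1(b_2^1) = p_2(b_2^2)$. It descends through $D_2$ and through $f(\BB_1)$ because $p_j \circ f_j = 0$, and is surjective by lifting $b_3 \in B_3$ independently under each $p_j$. The map $\phi$ sends $c \in C_1$ to $[(f_1(b_1^1), 0)]_{\BF}$ for any lift $\pi_1(b_1^1) = c$; two such lifts differ by $i_1(a)$, and the resulting discrepancy $[(f_1(i_1(a)), 0)]$ is the image under $f$ of $[(i_1(a), 0)] \in \BB_1$ (permissible because $\pi_1(i_1(a)) = 0 = \pi_2(0)$), hence vanishes in $\BF$.

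The substantive part is exactness at $\BF$. That $\bar p \circ \phi = 0$ is immediate. The inclusion $\ker \bar p \subset \mathrm{im}\,\phi$ rests on a symmetric description of $\phi$: for any $b_1^2 \in B_1^2$ with $\pi_2(b_1^2) = c$, one has $[(0, f_2(b_1^2))]_{\BF} = \phi(-c)$, because the difference between the ``first-coordinate'' and ``second-coordinate'' representatives of $\phi(c)$ is precisely the image under $f$ of an element of $\BB_1$. Given a class in $\ker\bar p$, horizontal exactness of the two rows lifts it to some $(f_1(b_1^1), f_2(b_1^2))$, and splitting this as $(f_1(b_1^1), 0) + (0, f_2(b_1^2))$ inside $H_2$ identifies the class with $\phi(\pi_1(b_1^1) - \pi_2(b_1^2))$. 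For injectivity of $\phi$: if $[(f_1(b_1^1), 0)] \in f(\BB_1)$, then by injectivity of $f_1$ and $f_2$ the witness in $\BB_1$ must have the form $(b_1^1 - i_1(a), -i_2(a))$, and the constraint $\pi_1 = \pi_2$ built into $\BB_1$ forces $\pi_1(b_1^1) = 0$. The main obstacle is producing the symmetric formula for $\phi$ cleanly; this is where the $C_1$-factor, measuring the failure of the sequence $0 \to \BB_1 \to \BB_2 \to B_3 \to 0$ to be exact, genuinely materialises.
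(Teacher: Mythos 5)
Your proposal is correct and follows essentially the same route as the paper's proof: the map $\phi$ is the same lift-and-push construction (the paper packages it as a map on $(C_1\oplus C_1)/\Delta_{C_1}$, you as a first-coordinate representative together with the symmetric formula $[(0,f_2(b_1^2))]=\phi(-\pi_2(b_1^2))$), and exactness at $\BF$ is established by the same diagram chase using horizontal exactness of both rows to lift elements of $\ker\bar{p}$. Your write-up is in fact slightly more complete, since you also verify injectivity of $\phi$ and that $\bar{p}\circ\phi=0$, points the paper leaves implicit.
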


\begin{proof} \cite{rabihodge}, Appendix B. We repeat the proof here as that is unpublished. The horizontal sequence is exact by definition. To show the vertical sequence is exact we have to first describe be map $\phi$.
It is defined as follows. One has maps $\pi_j:B_1^{j} \longrightarrow C_1$. Consider the natural map 
$$\begin{CD}
\tilde{\phi}:C_1\oplus C_1 @>>> (B_1^1 \oplus B_1^2 )/\Delta_{A_1}@>(f_1,f_2)>>\BB_2=H_2/D_2
\end{CD}
$$
$$\tilde{\phi}(c_1,c_2) \rightarrow (\pi_1^{-1}(c),\pi_2^{-1}(c)) \rightarrow (f_1(\pi_1^{-1}(c_1)),f_2(\pi_2^{-1}(c_2)))$$
where $\Delta_{A_1}=\{(i_1(a),i_2(a))| a \in A_1\}$. $\phi$  gives a  well defined map 
$$(C_1\oplus C_1)/\Delta_{C_1} \longrightarrow \BB_2/ \tilde{\phi}(\Delta_{C_1})$$
where  $\Delta_{C_1}=\{(c,-c) | c \in C_1\}$ is the anti-diagonal.  This is well defined as if $(b_1,b_2)$ and $(b'_1,b'_2)$ are in $(\pi_1^{-1}(c_1),\pi_2^{-1}(c_2))$ we have to show 
$$(f_1(b_1),f_2(b_2)) \equiv (f_1(b'_1),f_2(b'_2)) \mod \tilde{\phi}(\Delta_{C_1})$$
or 
$$(f_1(b_1-b'_1),f_2(b_2-b'_2)) \in \tilde{\phi}(\Delta_{C_1}).$$
From exactness, we have $b_1-b'_1=i_1(a_1)$ and $b_2-b'_2=i_2(a_2)$  with $a_i \in A_1$. The image of $\Delta_{C_1}$ under $(\pi_1^{-1},\pi_2^{-1})$ consists of $(b,b')$ such that $\pi_1(b)=\pi_2(b')$. $(i_1(a_1),i_2(a_2))$ lie in this image, hence
$$(f_1(i_1(a_1)),f_2(i_2(a_2))) = (f_1(b_1-b'_1),f_2(b_2-b'_2)) \in  \tilde{\phi}(\Delta_{C_1}).$$
Note that the pre-image  $(\pi_1^{-1},\pi_2^{-1})(\Delta_{C_1})$ in $B_1^1 \oplus B_1^2)/\Delta_{A_1}$  is the Baer difference $\BB_1$. Further, $(C_1\oplus C_1)/\Delta_{C_1}) \simeq C_1$. Hence one has a  map $\phi: C_1 \rightarrow \BF=\BB_2/\BB_1$ and  we get a exact sequence 
$$0 \longrightarrow  C_1 \stackrel{\phi}{\longrightarrow} \BF_{B^1_2 \tilde{\ominus}_B B^2_2}  \stackrel{\bar{p}}{\longrightarrow} B_3 \longrightarrow 0$$
This sequence is exact as if $b=(b^1_2,b^2_2)$ is in $\BF_{B^1_2 \tilde{\ominus}_B B^2_2}$ and $\bar{p}(b)=0$, then $p_1(b^1_2)=p_2(b^2_2)=0$. So $b^1_2$ and $b^2_2$ lie in the image of  $B_1^1\oplus B_1^2$ --- say $b_2^1=f_1(b_1^1)$ and $b_2^2=f_2(b_1^2)$.  Let $c_i=\pi_1(b_1^1)$ and $c_2=\pi_2(b_1^2)$.  Then 
$$b=\phi(c_1,c_2)$$
so it lies in the image of $\phi$.

\end{proof}

In general,  for any $\ZZ$-linear combination $m \cdot B^1_2 \tilde{\ominus}_B \;n \cdot B^2_2$ of $B_2^1$ and $B_2^2$ we get an extension class ${\mathbf f}_{m\cdot  B^1_2 \tilde{\ominus}_B \;n \cdot B^2_2}$  in $\Ext(B_3,C_1)$ corresponding to $\BF_{m \cdot B^1_2 \tilde{\ominus}_B \;n \cdot B^2_2}$.  The relation between this and the extension classes constructed above is given as follows:
\begin{cor} Let ${\mathbf f}_{B_2^j}$ and ${\mathbf f}_{m \cdot B_2^1 \tilde{\ominus}_B \; n \cdot B^2_2}$ be the extensions in $\Ext(B_3,C_1)$  described  above. Then, 
$${\mathbf f}_{m \cdot B_2^1 \tilde{\ominus}_B\; n \cdot B^2_2}=m\cdot {\mathbf f}_{B_2^1} \ominus_B \; n\cdot {\mathbf f}_{B_2^2}.$$
\label{newextensions}
\end{cor}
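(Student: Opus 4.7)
The plan is to deduce the corollary from Lemma~\ref{rabilemma} by first reducing to the basic case $m=n=1$ and then exhibiting an explicit equivalence of extensions. Since push-forward along $\pi_j$ defines a group homomorphism on $\Ext^1$, and integer scaling of an extension is realized by iterated Baer sum with itself, both sides of the claim are $\ZZ$-bilinear in the pair $(B_2^1, B_2^2)$ with respect to the (generalized) Baer sum structure. Hence it suffices to prove the unweighted statement $\mathbf{f}_{B_2^1 \tilde{\ominus}_B B_2^2} = \mathbf{f}_{B_2^1} \ominus_B \mathbf{f}_{B_2^2}$ in $\Ext(B_3, C_1)$, and the $m,n$-weighted version follows by additivity.

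For $m=n=1$ I would unfold both extensions concretely. On the right-hand side, each push-forward $\mathbf{f}_{B_2^j}$ has middle term
$$P_j := (B_2^j \oplus C_1) / \{ (f_j(b), -\pi_j(b)) : b \in B_1^j \},$$
and the Baer difference has middle term $\{([b_2^1, c^1],[b_2^2, c^2]) \in P_1 \oplus P_2 : p_1(b_2^1) = p_2(b_2^2)\}$ modulo the anti-diagonal copy of $C_1$. On the left-hand side, Lemma~\ref{rabilemma} realises the middle term of $\mathbf{f}_{B_2^1 \tilde{\ominus}_B B_2^2}$ as $\BF = \BB_2/\BB_1$ with $\BB_2 = H_2/D_2$. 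The natural candidate for a morphism between the two middle terms sends a class $([b_2^1, c^1],[b_2^2, c^2])$ to $[(b_2^1, b_2^2)] + \phi(c^1 - c^2)$ in $\BF$, where $\phi$ is the map constructed in the proof of Rabi's lemma. Composition with the projection to $B_3$ is $b_2^1 \mapsto p_1(b_2^1) = p_2(b_2^2)$, and the restriction to $C_1$ is $c \mapsto \phi(c)$, so both structure maps of the extension are preserved tautologically.

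The main work is to verify well-definedness modulo the quotients on both sides. The push-out relation $(f_j(b), -\pi_j(b))$ defining $P_j$ is absorbed by the quotient $\BB_2 \to \BB_2/\BB_1$, because $\BB_1$ is precisely the ordinary Baer difference of the vertical exact sequences and its image in $\BB_2$ under $f_1 \oplus f_2$ contains exactly such classes; the anti-diagonal $C_1$-relation on the source corresponds to the anti-diagonal quotient already built into $\phi$'s domain $(C_1 \oplus C_1)/\Delta_{C_1}$; and the $D_2$-relation on the target absorbs the ambiguity in choosing preimages $\pi_j^{-1}(c^j)$, so that $\phi$ is independent of lifts. Once well-definedness is established, the five-lemma applied to the resulting morphism of short exact sequences upgrades it to an equivalence of extensions in $\Ext(B_3, C_1)$.

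The most delicate point, which I expect to be the main obstacle, is keeping signs and quotient relations aligned throughout. The Baer difference is built by negating one of the structure maps, and this sign must match Rabi's convention for $\phi$ on the anti-diagonal $\Delta_{C_1} = \{(c, -c)\}$. Simultaneously, the identification of $\BB_1 \subset \BB_2$ with the image of the ordinary Baer difference of the vertical extensions requires careful tracking, since $\BB_1 = (B_1^1 \oplus B_1^2)/\Delta_{A_1}$ must project to $\BB_2 = H_2/D_2$ via $(b_1^1, b_1^2) \mapsto (f_1(b_1^1), f_2(b_1^2))$ and the resulting quotient $\BF = \BB_2/\BB_1$ must kill precisely the push-out relations coming from $P_1$ and $P_2$. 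This bookkeeping is routine but unavoidable.
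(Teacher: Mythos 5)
Your proposal is correct and is essentially the expanded version of the paper's own one-line proof, which simply defers to the construction of the map $\phi$ in Lemma \ref{rabilemma}: the reduction to $m=n=1$ by additivity, the explicit morphism from the Baer difference of the push-forwards $P_j=(B_2^j\oplus C_1)/\{(f_j(b),-\pi_j(b))\}$ to $\BF=\BB_2/\BB_1$, and the five lemma are exactly the content behind that remark. The only caveat is the sign bookkeeping you already flag: with the stated formula $\phi(c^1-c^2)$ the push-out relation $(f_2(b),-\pi_2(b))$ in $P_2$ is absorbed only after fixing the identification $(C_1\oplus C_1)/\Delta_{C_1}\simeq C_1$ consistently (equivalently, writing the correction term as the class of $\tilde{\phi}(c^1,-c^2)$), a point on which the paper itself is not careful, but this does not affect the validity of the approach.
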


\begin{proof} This follows from the construction of the map $\phi$. 
\end{proof}
In the next section we apply these constructions in our particular case to get the extension class we want.

\subsection{The Extension $e^4_{QR,P}$}

In this section we construct an extension $e^4_{QR,P}$ in $\Ext(\otimes^3 H^1(C),H^1(C))$ which generalises the element $e^4_{Q,P} \ominus_B e^4_{R,P}$ constructed by Colombo. Recall that we have an exact sequence 
 $$E^3_{Q,P}: 0 \longrightarrow H^1(C) \longrightarrow (J_{Q,P}/J_{Q,P}^3)^* \longrightarrow \otimes^2 H^1(C) \longrightarrow 0$$ 
 and a similar one  $E^3_{R,P}$. Also, we have the  exact sequence 
 $$E_{Q,P}^4: 0 \longrightarrow (J_{Q,P}/J_{Q,P}^3)^*  \longrightarrow (J_{Q,P}/J_{Q,P}^4)^* \longrightarrow (J_{Q,P}^3/J_{Q,P}^4)^* \longrightarrow 0$$
 and a similar  $E^4_{R,P}$.  This gives us  diagrams as in Lemma \ref{rabilemma}, with $B_1^1=(J_{Q,P}/J_{Q,P}^3)^*$,  $B_1^2= (J_{R,P}/J_{R,P}^3)^* $, $B_2^1= (J_{Q,P}/J_{Q,P}^4)^* $, $B_2^2= (J_{R,P}/J_{R,P}^4)^*$ and $A_1=H^1(C)$, $B_3=\otimes^2 H^1(C)$ and finally $C_1=\otimes^2 H^1(C)$. 

Let $\bff_Q$, $\bff_R$ and $\bff_{QR}$ denote the classes in $\Ext(\otimes^3 H^1(C),\otimes^2 H^1(C))$ with middle terms $H^{23}_{Q,P}, H^{23}_{R,P}$ and $H^{23}_{QR,P}$  corresponding to the diagrams for $Q$, $R$ and their generalised Baer difference.  $\bff_Q$ and $\bff_R$ are the push-forwards of $m^4_{Q,P}$ and $m^4_{R,P}$ respectively. From Corollary \ref{newextensions} one has 
$$\bff_{QR}=\bff_Q-\bff_R.$$

\begin{lem}\label{rabilemma3.3}  $\bff_{QR}$ is $N$-torsion in $\Ext(\otimes^3 H^1(C),\otimes^2 H^1(C))$. Namely,  
$$ N\cdot H^{23}_{QR,P}=\otimes^2 H^1(C) \oplus \otimes^3 H^1(C).$$
 \end{lem}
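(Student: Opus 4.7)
The plan is to propagate the torsion-splitting of Lemma \ref{splitting} from the $E^3$-level to the $E^4$-level via Rabi's construction. By Corollary \ref{newextensions}, the class $N\cdot \bff_{QR}$ equals $\bff_{N\cdot B_2^1\, \tilde{\ominus}_B\, N\cdot B_2^2}$, the extension obtained by applying the generalized Baer difference to the $N$-fold Baer sums $N\cdot E^4_{Q,P}$ and $N\cdot E^4_{R,P}$. So proving $N\cdot \bff_{QR}=0$ in $\Ext(\otimes^3 H^1(C),\otimes^2 H^1(C))$ amounts to splitting the vertical sequence of Rabi's lemma
$$0 \longrightarrow \otimes^2 H^1(C) \stackrel{\phi}{\longrightarrow} N\cdot H^{23}_{QR,P} \stackrel{\bar p}{\longrightarrow} \otimes^3 H^1(C) \longrightarrow 0$$
supplied by this data.

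To produce such a splitting I will enlarge the Rabi diagram by additionally replacing the vertical sequences $E^3_{Q,P}$, $E^3_{R,P}$ by their $N$-fold Baer sums $N\cdot E^3_{Q,P}$, $N\cdot E^3_{R,P}$. In the enlarged diagram the Baer difference of the verticals is $N\cdot H^3_{QR,P}$, which by Lemma \ref{splitting} splits as $H^1(C) \oplus \otimes^2 H^1(C)$. A chosen section $s:\otimes^2 H^1(C)\to N\cdot H^3_{QR,P}$ gives a retraction $r_3: N\cdot H^3_{QR,P}\to H^1(C)$ of the $N$-fold Baer sum of $E^3_{QR,P}$. I then trace through the definition of $\phi$ in Rabi's lemma, which lifts $c\in C_1$ to a chosen $\pi_j^{-1}(c)\in B_1^j$ and applies $f_j$. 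The splitting $s$ canonically picks out such lifts on the $N$-fold level, and one checks that for any choice of $c$ the resulting element of $\BB_2$ vanishes modulo $\BB_1$, so $\phi$ admits a retraction after multiplication by $N$. Equivalently this produces the required section of $\bar p$.

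The main obstacle I anticipate is organizing the compatibility between the horizontal $N$-fold Baer sums (which enter in Corollary \ref{newextensions} to identify the extension class as $N\cdot \bff_{QR}$) and the vertical $N$-fold Baer sums (which enter to exploit Lemma \ref{splitting}). Since the middle terms $B_1^j$ are shared between horizontal and vertical sequences in Rabi's diagram, the enlarged diagram has to be constructed as the pushout/pullback of the two $N$-fold operations, and one must verify that the resulting Rabi $\BF$ still represents the class $N\cdot \bff_{QR}$. Once this bookkeeping is in place, the construction of the splitting is a straightforward diagram chase following the proof of Lemma \ref{rabilemma}, mirroring the way Lemma \ref{splitting} uses the Kaenders injection to translate $N(Q-R)=0$ in $\Pic^0(C)$ into an $\Ext$-group splitting.
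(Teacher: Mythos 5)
There is a genuine gap. Your argument tries to deduce the splitting of $N\cdot\bff_{QR}$ purely formally from the splitting of $N\cdot H^3_{QR,P}$ by chasing Rabi's diagram, but the two splittings live in different sequences and there is no formal implication between them. Lemma \ref{splitting} splits (after multiplying by $N$) the sequence $0\to A_1\to \BB_1\to C_1\to 0$, i.e. it gives a retraction of $H^1(C)\hookrightarrow N\cdot H^3_{QR,P}$. The class $\bff_{QR}$, by contrast, is the class of the \emph{vertical} sequence $0\to C_1\xrightarrow{\phi}\BF\xrightarrow{\bar p}B_3\to 0$ of Lemma \ref{rabilemma}, and $\bff_Q$, $\bff_R$ are push-forwards of the genuinely degree-four classes $m^4_{Q,P}$, $m^4_{R,P}$ along $\pi_j:(J_{\bullet,P}/J_{\bullet,P}^3)^*\to\otimes^2H^1(C)$. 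Knowing that the degree-three extensions become trivial after multiplication by $N$ says nothing, by itself, about these push-forwards: even if $E^3_{Q,P}$ and $E^3_{R,P}$ were each completely split, $\bff_Q\ominus_B\bff_R$ could a priori be any class, since it depends on $J/J^4$. Your intermediate claim that ``for any choice of $c$ the resulting element of $\BB_2$ vanishes modulo $\BB_1$'' cannot be right as stated: $\phi$ is injective into $\BF=\BB_2/\BB_1$ (that is the exactness of the vertical sequence), so its image does not vanish; and a retraction of an injective $\phi$ exists precisely when the class $\bff_{QR}$ (or its $N$-multiple) is trivial, which is the statement to be proved. The diagram chase is therefore circular.

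The missing input, which is what the paper actually uses, is Rabi's Corollary 3.3: the push-forward $\bff_{Q}=\pi_{*}(m^4_{Q,P})$ is explicitly identified with the Baer sum $H^1(C)\otimes E^3_{Q,P}\oplus_B E^3_{Q,P}\otimes H^1(C)$ in $\Ext(\otimes^3H^1(C),\otimes^2H^1(C))$, and likewise for $R$. This is a nontrivial structural fact about the mixed Hodge structure on $(J/J^4)^*$ (reflecting how the length-three iterated integrals are built from length-two data), not a consequence of the general Baer/Rabi formalism. Once it is in hand, linearity gives $\bff_{QR}=H^1(C)\otimes m^3_{QR,P}\oplus_B m^3_{QR,P}\otimes H^1(C)$, and Lemma \ref{splitting} ($N\cdot m^3_{QR,P}=0$) immediately yields $N\cdot\bff_{QR}=0$. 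You need to either invoke this corollary or prove an equivalent computation of $\pi_*m^4_{\bullet,P}$; the bookkeeping with $N$-fold Baer sums that you describe does not substitute for it.
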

\begin{proof} From  Rabi \cite{rabi}, Corollary 3.3,  one has that the class $H^{23}_{Q,P}$ in $\Ext(\otimes^3 H^1(C),\otimes^2 H^1(C))$ is given by 
$$H^{23}_{Q,P}=H^{1}(C)\otimes E^{3}_{QP} \oplus_B E^{3}_{QP} \otimes H^1(C).$$
Similarly, 
$$H^{23}_{R,P}=H^{1}(C)\otimes E^{3}_{RP} \oplus_B E^{3}_{RP} \otimes H^1(C).$$
Taking their difference gives 
$$H^{23}_{R,P}-H^{23}_{Q,P}=H^{1}(C)\otimes(E^{3}_{R,P}-E^{3}_{Q,P}) \oplus_B (E^{3}_{RP}-E^{3}_{QP})\otimes H^{1}(C)$$
From Lemma \ref{splitting} we have  $E^{3}_{R,P}-E^{3}_{Q,P}=(0, 2g(Q-R)) \in \Ext_{MHS}(\otimes^{2}H^{1}(C),H^{1}(C))$. As $Q-R$ is $N$-torsion, we have  
$$N \cdot(H^{23}_{R,P}-H^{23}_{Q,P})\cong H^{1}(C)\oplus\otimes^{2}H^{1}(C).$$
  \end{proof}
 We also know from Lemma \ref{splitting} that $m^3_{QR,P}$ is $N$-torsion. Hence from Lemma \ref{rabilemma} we get an exact sequence 
 $$\begin{CD}
 0@>>> \otimes^2 H^1(C) \oplus H^1(C) @>>> N \cdot H^4_{QR,P} @>>> \otimes^3 H^1(C) \oplus \otimes^2 H^1(C) @>>> 0.
 \end{CD}$$ 
 which gives a class  in $\Ext( \otimes^3 H^1(C) \oplus  \otimes^2 H^1(C), \otimes^2 H^1(C) \oplus H^1(C))$. 
From the K\"unneth theorem, 
$$\Ext( \otimes^3 H^1(C) \oplus  \otimes^2 H^1(C), \otimes^2 H^1(C) \oplus H^1(C))= \prod_{ i \in \{2,3\}, j \in\{1,2\}}  \Ext(\otimes^i H^1(C),\otimes^j H^1(C)).$$
Define
\begin{equation}\label{fref}
e^4_{QR,P}  \in \Ext(\otimes^3 H^1(C), H^1(C))  
\end{equation}
to be the projection onto that component. Note that if $C$ is hyperelliptic, this class $e^4_{QR,P}$ is precisely the class $e^4_{QR,P}=e^4_{Q,P} \ominus_B e^4_{R,P}$ constructed by Colombo.

\subsection{Statement of the Main Theorem}

Armed with the class $e^4_{QR,P} \in \Ext(\otimes^3 H^1(C), H^1(C))$ we can proceed as in Colombo.

Let $\Omega$ denote the pullback of the polarisation on $J(C)$ in $H^2(J(C))(1)$ to $\otimes^2 H^1(C)(1)$. There is  an injection obtained by tensoring with $\Omega$,
$$J_{\Omega}=\otimes \Omega:H^1(C)(-1) \longrightarrow \otimes^3 H^1(C).$$
We first pull back the class using the map $J_{\Omega}$ to get a class in
$$J_{\Omega}^*(e^4_{QR,P}) \in \Ext(H^1(C)(-1),H^1(C)).$$
 Tensoring with $H^1(C)$ we get a class 
 $$J_{\Omega}^*(e^4_{QR,P}) \otimes H^1(C) \in  \Ext( \otimes^2 H^1(C)(-1), \otimes^2 H^1(C)).$$
 Once again pulling back using the map $\beta:\ZZ(-1) \rightarrow \otimes^2 H^1(C)$ gives us a class 
\begin{equation}\label{fref1}
\epsilon_{QR,P}^4 \in \Ext(\ZZ(-2),\otimes^2 H^1(C)) \subset \Ext(\ZZ(-2),H^2(C \times C))
\end{equation}

Our main theorem is 
 
 \begin{thm} Let $C$ be a smooth projective  curve of genus $g_C$ and $P$, $Q$ and $R$ be three distinct  points. Let  $Z_{QR,P}$ be the element of the motivic cohomology group $H^3_{\M}(J(C), \ZZ(2))$ constructed above. Let $\epsilon^4_{QR,P}$ be the extension in $\Ext_{MHS}(\ZZ(-2),\wedge^2 H^1(C))$ constructed above. Then 
 $$\epsilon^4_{QR,P}= (2g_C+1)N \reg_{\ZZ}(Z_{QR}) $$
 in $\Ext_{MHS}(\ZZ(-2),\wedge^2 H^1(C ))$.  
 \end{thm}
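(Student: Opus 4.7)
The plan is to compare both sides as linear functionals on the $\C$-vector space $F^1 H^2(J(C),\C) \cap \wedge^2 H^1(C)_{\C}$, working modulo the image of $H_2(J(C),\ZZ(1))$. For the right-hand side, we already have the explicit iterated-integral formula
\[
\reg_{\ZZ}(Z_{QR,P})(\phi \wedge \psi) = 2\int_{C\setminus\gamma}\log(f)\,\phi\wedge\psi + 2\pi i\int_\gamma(\phi\psi - \psi\phi).
\]
The task is to produce a parallel formula for $\epsilon^4_{QR,P}(\phi\wedge\psi)$ and then match them.

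First I would unravel the Hodge realisations. Using Chen's theorem, $(J_{Q,P}/J_{Q,P}^4)^*\otimes\C$ is the space of length-$\leq 3$ iterated integrals on $C_Q=C\setminus\{Q\}$ based at $P$, and likewise for $R$. An element of the middle term $H^4_{QR,P}$ of the generalised Baer difference pairs with loops in $\pi_1$ via the difference of iterated integrals along the $C_Q$- and $C_R$-realisations of the loop. After multiplying by $N$ (using Lemmas \ref{splitting} and \ref{rabilemma3.3}) and performing the pullbacks $J_\Omega$, $\otimes H^1(C)$, and $\beta$, the functional $\epsilon^4_{QR,P}$ becomes a linear combination of length-$3$ iterated integrals on punctured curves, evaluated on a cycle built from a symplectic basis coupled by the polarisation $\Omega = \sum \alpha_j \wedge \beta_j$.

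Next I would use the presence of the function $f_{QR}$ to fix the splitting issue that marred Colombo's original argument. On $C\setminus\{Q,R\}$, the form $d\log f_{QR}$ spans a canonical complement in $H^1(C\setminus\{Q,R\})$ to a subspace isomorphic to $H^1(C)$; this is exactly the retraction needed to interpret the length-$3$ iterated integrals as length-$2$ iterated integrals on $C$ modified by a single $d\log f$ factor. Applying the basic iterated-integral identities of Lemma \ref{basicproperties} (specifically the shuffle and the $\int df\,\omega$ formulas) reduces each length-$3$ integrand involving $d\log f$ to the sum of a boundary term $\log(f)\,\phi\wedge\psi$ on $C\setminus\gamma$ and a length-$2$ iterated integral of $\phi$ and $\psi$ along $\gamma$, paralleling the regulator formula.

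Finally I would assemble the pieces, carefully tracking the multiplicities. The factor $2g_C$ arises from the Kaenders identification in Theorem \ref{moddiag} where $\bar{m}^3_{Q,P}$ corresponds to $2g_C Q - 2P - \kappa_C$, together with the $+1$ coming from the direct summand contributions of $\Omega$-coupling on the two tensor factors in $H^{23}_{Q,P} = H^1(C)\otimes E^3_{Q,P}\oplus_B E^3_{Q,P}\otimes H^1(C)$; the factor $N$ is the torsion exponent from Lemma \ref{splitting}. The main obstacle will be the bookkeeping in this last step: showing that after the generalised Baer construction and the three pullbacks $J_\Omega$, $\otimes H^1(C)$, $\beta$, the mixed terms (e.g. the decomposable $H^1(C)$ factor coming from $m^3_P$ and the contribution of $\otimes^2 H^1(C)$ in $H^{23}_{QR,P}$) either cancel or project out, leaving exactly the iterated integrals that match the regulator; and that the restriction to $\wedge^2 H^1(C)\subset \otimes^2 H^1(C)$ is compatible with the holomorphicity assumption on $\psi$ so that the antisymmetrisation $\phi\psi-\psi\phi$ appears naturally.
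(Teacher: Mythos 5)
Your plan follows the paper's own route: evaluate both sides on $F^1\wedge^2H^1(C)$, realise $\epsilon^4_{QR,P}$ via Chen's theorem and its Carlson representative as a difference of length-three iterated integrals on $C_Q$ and $C_R$, use $f_{QR}$ to single out the subspace of $H_1(C_{QR})$ that splits $i_*$, and match the result against the formula of Theorem \ref{regform}. The sources of the constants are also essentially right, although in the paper the full factor $(2g_C+1)$ emerges inside the Carlson-representative computation of Theorem \ref{imthm} — $2g_C$ from summing $dx_k(\mu_{i\sigma(i),Q}-\mu_{i\sigma(i),R})=\tfrac{1}{N}dx_k\tfrac{df}{f}$ over the $2g_C$ terms of $\Omega$, and $+1$ from the single surviving term $(\mu_{ki,Q}-\mu_{ki,R})dx_{\sigma(i)}$ recombined by the shuffle relation — rather than directly from Kaenders' formula in Theorem \ref{moddiag}.

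The genuine gap is the sentence claiming that the identities of Lemma \ref{basicproperties} "reduce each length-3 integrand involving $d\log f$ to a boundary term $\log(f)\,\phi\wedge\psi$ on $C\setminus\gamma$ and a length-2 iterated integral along $\gamma$." Three things are missing there. First, Lemma \ref{basicproperties}(3) requires a single-valued primitive, and $\tfrac{df}{f}$ is not exact on $C_{QR}$; one must first pass to the covering $q:\tilde C\to C_{QR}$ of Proposition \ref{coverlem}, built from the subgroup $G\subset\ker(f_*)$ generated by the loops $\alpha_i={\alpha'_i}^N\beta_Q^{-m_i}$, on which $q^*(\tfrac{df}{f})$ is exact and the $\alpha_i$ lift — this integral, covering-space construction is precisely why $V$ cannot be taken to be just $\ker(f_*)$ on rational homology. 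Second, the shuffle and integration-by-parts identities only ever produce line integrals over the loop $\alpha$, whereas the regulator is a surface integral over $C\setminus\gamma$ plus a line integral over $\gamma=f^{-1}([0,\infty])$; the bridge is the separate Stokes-theorem argument of Proposition \ref{colomboprop} and Corollary \ref{nocompact}, which replaces $\alpha$ by a compactly supported Poincar\'e dual $\eta_\alpha$, cuts $C$ along $\gamma$ where $\log f$ jumps by $2\pi i$, and yields $\int_\alpha\tfrac{df}{f}\psi+W(\psi)=\int_{C\setminus\gamma}\eta_\alpha\wedge\Theta+2\pi i\int_\gamma\eta_\alpha\psi$. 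Third, your reduction silently drops the length-one correction terms $\mu_{ijk,Q}-\mu_{ijk,R}$ of the Carlson representative, which assemble into the form $W(\psi)$ with $dW(\psi)=\psi\wedge\tfrac{df}{f}$; this form is needed to make $\Theta=\log(f)\psi+W(\psi)$ closed on the cut surface, and it only disappears from the final answer because of the holomorphicity statements $dz_i\wedge W(dx_j)=0$ and $W(dz_i)=0$ of Propositions \ref{colombolemma} and \ref{wdz}. Without these three ingredients the comparison with $\reg_{\ZZ}(Z_{QR,P})$ does not close up.
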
  
 
In other words our theorem states that the regulator of a natural cycle in the motivic cohomology group of a product of curves, being thought of as an extension class  is the same as that as a natural extension of MHS coming from the fundamental group of the curve.  In fact, it is an extension of {\em pure} Hodge structures. 

\begin{rem}[Dependence on $P$] This is not so serious. If we do not normalise $f_{QR}$ with the condition that $f_{QR}(P)=1$ then one has to add an  expression of the form  $\log(f_{QR}(P)) \int_{C} \cdot$ to the term --- and this corresponds to adding a  {\em decomposable} element of the form $(\Delta_C,\log(f_{QR}(P)))$ to our element $Z_{QR}$. 

\end{rem}

\section{Carlson's Representatives}

The proof of the main theorem will follow by showing that both the algebraic cycle constructed earlier and the extension 
class constructed above  induce the same current. For that we have to understand the how an extension class induces a current. This comes from understanding the Carlson representative. In the section we once again follow Colombo \cite{colo} and adapt her arguments  to our situation.  

If $V$ is a MHS all of whose weights are negative, then the {\em Intermediate Jacobian of $V$} is defined  to be 
$$J_{0}(V) = \frac{ V_{\C}}{F^0V_{\C} \oplus V_{\ZZ}}.$$
This is a generalised torus - namely a group of the form $\C^a/\ZZ^b \simeq (\C^*)^{b} \times (\C)^{a-b}$ for some $a$ and $b$. 

An extension of mixed Hodge structures 
$$0 \longrightarrow A \stackrel{\iota}{\longrightarrow} H \stackrel{\pi}{\longrightarrow} B \longrightarrow 0$$
is called {\em separated}  if the lowest non-zero weight of $B$ is greater than the largest non-zero weight of $A$. This implies that $\Hom_{MHS}(B,A)$ has negative weights.  Carlson \cite{carl} showed that 
$$\Ext_{MHS}(B,A) \simeq J_0(\Hom(B,A)).$$
This is defined as follows. As an extension of {\em Abelian groups}, the extension splits. So  one has a map $r_{\ZZ}:H \rightarrow  A$ which is a retraction, $r_{\ZZ}\circ \iota=id$.  Let $s_F$ be a section in $\Hom(B_{\C},H_{\C})$ preserving the Hodge filtration. Then the Carlson representative of an extension is defined to be the class of
$$r_{\ZZ} \circ s_F \in J_0(Hom(B,A)).$$

We now describe explicitly the Carlson representative of the extension $\epsilon^4_{QR,P}$ constructed in the previous section. This is done in a few steps, first we describe the representative  for $e^4_{QR,P}$ and then  for its various pullbacks and push forwards to obtain that for $\epsilon^4_{QR,P}$. 

\subsection{Preliminaries}

As before, let $C$ be a smooth projective curve of genus $g=g_C$. We first describe  the Carlson representative of the extension 
$$e^4_{QR,P} \in \Ext_{MHS}(\otimes^3 H^1(C),H^1(C)).$$
From the above discussion this is an element of 
$$J_0(\Hom(\otimes^3 H^1(C),H^1(C))=\frac{\Hom(\otimes^3 H^1(C)_{\C},H^1(C)_{\C})}{F^0 \Hom(\otimes^3 H^1(C)_{\C},H^1(C)_{\C}) \oplus  \Hom(\otimes^3 H^1(C),H^1(C))}$$
so explicitly, given an element of $\otimes^3 H^1(C)_{\C}$ we get an element of $H^1(C)_{\C}$ which we can think of as a functional on $H_1(C)_{\C}$

Let $C_{QR}$ denote the open curve $C \setminus \{Q,R\}$. In fact we will describe the functional as an iterated integral made up of forms in  $H^1(C_{QR})_{\C}$ and will naturally be a functional on $H_1(C_{QR})$.  We have a natural inclusion 
$$i:C_{QR} \hookrightarrow C$$
which induces $i_*$ on homology and $i^*$ on cohomology.  In order to consider the iterated integral as a functional on $H_1(C)$ we have to make a choice of an embedding $H_1(C) \hookrightarrow H_1(C_{QR})$ which splits the map $i_*$. There are many ways of doing this, but for our formula to work, we need to make a particular choice. In this section we first construct a `natural' splitting of the map $i_*$ -- namely a subgroup of $H_1(C _{QR})$ which maps isomorphically to $H_1(C)$ under $i_*$. 

Consider the group $\pi_1(C_{QR};P)$. This is a free group on $2g+1$ generators. The generators have the following description. The fundamental polygon of $C$ is a $4g$ sided polygon with the edges $e_i$ and $e_{i+g}$ identified. The end  points of the edges are identified and so they give $2g$ loops $\alpha'_i$ in $C_{QR}$ which we consider as loops based at $P$. Let $\beta_Q$ be a small simple loop around $Q$ based at $P$. Then $\pi_1(C_{QR};P)=<\alpha'_1,\dots,\alpha'_{2g},\beta_Q>$. 

The map $f:C \rightarrow \CP^1$ restricts to give $f:C_{QR} \rightarrow \CP^1-\{0,\infty\}$ and this induces 
$$f_*:\pi_1(C_{QR};P) \longrightarrow  \pi_1( \CP^1-\{0,\infty\};1)$$
One knows $\pi_1(\CP^{1}-\{0,\infty\}) \simeq \ZZ$. Let $\beta_0$ denote the generator. Let $H=\Ker(f_*)$. $f_*(\pi_1(C_{QR};P))$ is a subgroup of $\ZZ$. In a deleted neighbourhood of $0$ the map looks like $z \rightarrow z^N$ where $N$ is the degree. Hence loop $\beta_Q$ is taken to $\beta_0^N$. Let $f_*(\alpha'_i)= \beta_0^{m_i}$ for some $m_i \in \ZZ$. Then $\alpha_i={\alpha'_i}^N \beta_Q^{-m_i}$ satisfies $f_*(\alpha_i)=0$.  Let $G$ denote the subgroup of $H=\Ker(f_*)$ generated by the $\{\alpha_i\}$.

The inclusion map $i$ also induces $i_*$ on the fundamental groups. Since $i_*(\beta_Q)=0$, $i_*(\alpha_i)=i_*(\alpha'_i)^N$. The fundamental group of $C$  is $\pi_1(C;P)=\{<i_*(\alpha'_1),\dots,i_*(\alpha'_{2g})>\slash \prod[i_*(\alpha'_i),i_*(\alpha'_{i+g})]=0\}$. Hence one has a map $G \rightarrow \pi_1(C;P)$ whose image is the subgroup generated by the $N^{th}$-powers of $\alpha'_i$. 

\begin{lem} The abelianization of $G$ is isomorphic to the subgroup of index $N^{2g}$ of the abelianization of $\pi_1(C;P)$.
$$G/[G,G] \simeq N \cdot \pi_1(C)/[\pi_1(C),\pi_1(C)]$$
where $N\cdot$ denotes multiplication by $N$. 
\label{commutator}
\end{lem}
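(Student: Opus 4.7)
The plan is to compute $G^{ab}$ by tracking the generators $\alpha_i$ through the chain
$$G \hookrightarrow \pi_1(C_{QR},P) \xrightarrow{i_*} \pi_1(C,P)$$
at the level of abelianizations. Writing $a_i := [\alpha'_i]$ and $b := [\beta_Q]$, the freeness of $\pi_1(C_{QR},P)$ on $\{\alpha'_1,\dots,\alpha'_{2g},\beta_Q\}$ gives $H_1(C_{QR}) \simeq \ZZ^{2g+1}$ with basis $\{a_1,\dots,a_{2g},b\}$; the standard presentation of $\pi_1(C,P)$ gives $H_1(C) \simeq \ZZ^{2g}$ with basis $\{[\alpha'_1],\dots,[\alpha'_{2g}]\}$; and the map induced by inclusion, $i_*\colon H_1(C_{QR}) \to H_1(C)$, is the projection killing $b$ and sending $a_i$ to $[\alpha'_i]$.

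From $\alpha_i = (\alpha'_i)^N\beta_Q^{-m_i}$ I read off that the class of $\alpha_i$ is $Na_i - m_i b$ in $H_1(C_{QR})$, which maps under $i_*$ to $N[\alpha'_i]$ in $H_1(C)$. Because $G$ is generated by the $2g$ elements $\alpha_1,\dots,\alpha_{2g}$, there is a canonical surjection $\pi\colon \ZZ^{2g} \twoheadrightarrow G^{ab}$, and composing with the induced map $G^{ab} \to H_1(C)$ produces the homomorphism $\ZZ^{2g} \to \ZZ^{2g}$ sending the $i$-th standard basis vector to $N[\alpha'_i]$. Since $\{[\alpha'_i]\}$ is a $\ZZ$-basis of $H_1(C)$, this composition is injective with image exactly the subgroup $N\cdot H_1(C)$, which has index $N^{2g}$.

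The injectivity of the composition forces $\pi$ to be injective, hence an isomorphism $\ZZ^{2g} \simeq G^{ab}$, and simultaneously shows that the induced map $G^{ab} \to H_1(C)$ is injective with image $N\cdot H_1(C)$. This yields the claimed isomorphism $G/[G,G] \simeq N\cdot \pi_1(C)/[\pi_1(C),\pi_1(C)]$. The argument is essentially formal once the abelianized images of the $\alpha_i$ are written down, and I do not see any real obstacle; as a byproduct one obtains that $G$ is free of rank exactly $2g$ with free basis $\{\alpha_1,\dots,\alpha_{2g}\}$, since $G$ is a subgroup of a free group (hence free by Nielsen--Schreier) and the rank of a free group equals the rank of its abelianization.
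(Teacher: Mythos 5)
Your proof is correct and follows essentially the same route as the paper: both identify $G/[G,G]$ by pushing the generators $\alpha_i$ into $H_1(C)$, where they become $N[\alpha'_i]$, and both conclude that the abelianization is free of rank $2g$ mapping onto $N\cdot H_1(C)$. The only difference is in how the key point $\ker\bigl(G\to H_1(C)\bigr)=[G,G]$ is established --- the paper rearranges words explicitly modulo commutators to show that an element with all exponent-sums zero lies in $[G,G]$, whereas you obtain it formally from the observation that the surjection $\ZZ^{2g}\twoheadrightarrow G/[G,G]$ followed by the injective map $e_i\mapsto N[\alpha'_i]$ is injective; your packaging is cleaner and equally valid.
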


\begin{proof}  Let $\alpha=\prod \alpha_{a_i}^{b_i}$ be a word in $G$. For a generator $\alpha_i$ of $G$ define 
$$\ord_{\alpha_i} (\alpha)=\sum_{a_i=i} b_i$$
namely, the number of times $\alpha_i$ appears in the word. Define
$$\Psi:G \rightarrow \ZZ^{2g}$$
$$\Psi(\alpha) = \left(\ord_{\alpha_i}(\alpha),\dots,\ord_{\alpha_{2g}}(\alpha)\right)$$
Let $K=\ker(\Psi)$. Clearly $[G,G] \subset K$. Further, the map $\Psi$ factors through $i_*$ and is surjective. 
We claim $K=[G,G]$. To see this, observe that if $a,b \in G$ 
$$ab \equiv ba \, \mod \,[G,G]$$
Repeatedly applying this one can see that any word 
$$\alpha=\prod \alpha_{a_i}^{b_i} \equiv \prod_{i=1}^{2g} \alpha_i^{\ord_{\alpha_i}(\alpha)} \, \mod \,[G,G]$$
In particular, if $\ord_{\alpha_i}(\alpha)=0$ for all $i$, $\alpha \in [G,G]$. Hence $K=[G,G]$. Hence 
$$G/[G,G] \simeq  \ZZ^{2g}$$
The map $i_*$ takes $\alpha_i$ to ${\alpha'}_i^N$. One has a similar map $\Psi':\pi_1(C;P) \rightarrow \ZZ^{2g}$ using $\alpha'_i$ instead of $\alpha_i$ which shows that the abelianization of $\pi_1(C;P)$ is $\ZZ^{2g}$ as well. However, under this map $\Psi'(\alpha_i)=N$ and hence $G/[G,G]$ is carried to the subgroup $N \cdot \ZZ^{2g}$. Multiplication by $N$ is an isomorphism so the map 
 $$i_N:=\frac{1}{N} \circ i_*:G/[G,G] \longrightarrow \pi_1(C)/[\pi_1(C),\pi_1(C)]$$
 is an isomorphism between the two abelianizations.

\end{proof}

Let $V=G/[G,G]$. The abelianization of the fundamental group of $C_{QR}$ is $H_1(C_{QR})$ and so $V$ is a subgroup of $H_1(C_{QR})$. The abelianization of $\pi_1(C)$ is $H_1(C)$. Hence the map $i_N$ is  an isomorphism between $V$  and $H_1(C)$.  Let $j_N:H_1(C) \longrightarrow V$  be the inverse isomorphism. This gives an embedding of $H_1(C)$ in $H_1(C_{QR})$.  As discussed above, the Carlson representative is a functional on  $H_1(C)$. However,  we will obtain a functional on $H_1(C_{QR})$ which will be the Carlson representative of the extension when considered as a functional on $V$.

Let $[\alpha]$ denote the homology class of a loop $\alpha$. The collection $\{[\alpha_i']\}$ has the property that their images $\{[i_*(\alpha'_i)]\}$ in $H_1(C)$ form a symplectic basis.  Since $i_*([\beta_Q])=0$, $i_*([\alpha_i])=N i_*([\alpha'_i])$. Hence under the isomorphism, $i_N([\alpha_i])=[i_*(\alpha'_i)]$. Let $\{dx_i\}$ be the dual  basis of harmonic forms in  $H^1(C,\C)$ satisfying
$$\int_{[i_*(\alpha_i')]} dx_j=\delta_{ij}$$
where $\delta_{ij}$ is the Kronecker Delta function. With this choice of $\{[\alpha'_i]\}s$ and $\{dx_i\}s$, the volume form on $H^2(C)$ can be expressed as follows. Let 
$$c(i)=\begin{cases} 1 &\text{ if } i\leq g_C\\
                                         -1 & \text{ if } i>g_C\end{cases}
                                                $$
and $\sigma(i)=i+c(i)g_C$.  The volume form is 
$$\sum_{i=1}^{2g_C} c(i) dx_i \wedge dx_{\sigma(i)}.$$
From that one gets that a Poincar\'e dual of $c(i)dx_{\sigma(i)}$ is  $[i_*(\alpha_i')]$. 

The group $V$ is a subgroup of $H_1(C_{QR})$. Recall that for the non-compact manifold $C_{QR}$,  Poincar\'e duality states that 
$$H^1_c(C_{QR}) \simeq H_1(C_{QR}),$$ 
where $H^1_c(C_{QR})$ is the cohomology with  compact support of $C_{QR}$.  This group has a mixed Hodge structure determined by identifying it with the relative cohomology group $H^1(C_{QR},\{Q,R\})$. Unlike $H^1(C_{QR})$ which has nontrivial weight $1$ and weight $2$ pieces, cohomology with compact support has weight $0$ and weight $1$ pieces and is covariant. However, we have an isomorphism of the weight $1$ graded pieces, 
$$Gr^{W}_1 H^1_c(C_{QR})_{\Q} \simeq Gr^W_1 H^1(C_{QR})_{\Q} \simeq H^1(C)_{\Q}$$
Here the first isomorphism is induced by $id_*$, the identity map  and the second by $i^*$. 

The space $V$ determines a splitting of the Hodge structure on $H_1(C_{QR})$. The space $V^*$ of Poincar\'e duals of element of $V$ is a subspace of $H^1_c(C_{QR})$ which determines a splitting of the Hodge structure on $H^1_c(C_{QR})$. Further, $V^*$ is isomorphic to $H^1(C)$. Hence if $\eta$ is a form in $H^1(C)$ it is cohomologous in $C_{QR}$ to a compactly supported form in $V^*  \subset H^1_c(C_{QR})$. One has 
$$H^1_c(V_{QR})_{\Q}=V_{\Q} \oplus \Q\cdot \omega_Q$$
where $\omega_Q$ is a Poincar\'e dual of $\beta_Q$. $\Q \cdot \omega_Q \simeq \Q(0)$. Note that 
$$\int_{[\alpha_j]} \omega_Q=\int_{C_{QR}} i^*(c(j)dx_{\sigma(j)})  \wedge \omega_Q=$$
$$= - \int_{C_{QR}} \omega_Q \wedge i^*(c(j) dx_{\sigma(j)}) = - \int_{\beta_Q} i^*(c(j)dx_{\sigma(j)})=-\int_{i_*([\beta_Q])} c(j) dx_{\sigma(j)}=0$$
since $i_*(\beta_Q)=0$.  Further 
$$\int_{[\alpha_i]} i^*(dx_j)=\int_{i_*([\alpha_i])} dx_j=\int_{Ni_*([\alpha'_i])} dx_j=N \delta_{ij}.$$
Hence the dual of $[\alpha_i]$ is $\frac{i^*(dx_i)}{N}$ and under the dual map $dx_i$ is taken to $\frac{dx_i}{N}$ in  $V^*=\Hom(V,\ZZ)$.  Further, note that 
$$\int_{[\alpha_i]} \frac{df}{f}=\int_{f_*(c(k)\alpha_{\sigma(k)})} \frac{dz}{z}=0$$
since $[\alpha_i] \in \Ker(f_*)$.
 
We now construct a cover of $C_{QR}$ which has the property that its first  homology group is $G/[G,G]$ and the form $\frac{df}{f}$ is {\em exact}.  Further, the loops $\alpha_i$ lift to loops on this cover. We do that as follows.  Let  $u:X \rightarrow C_{QR}$ denote the universal cover of $C_{QR}$. The group $G$ acts on $X$ as a group of deck transformations. Let $\tilde{C}=X/G$ denote the quotient and $q:\tilde{C} \rightarrow C_{QR}$ denote the covering map. This is a cover
\begin{equation}\label{cover}
q:(\tilde{C}, \tilde{P}) \longrightarrow (C_{QR},P) 
\end{equation}
such that $\pi_1(\tilde{C};\tilde{P}) = G$, where $\tilde{P}$ is a point in $q^{-1}(P)$. Now by homotopy lifting (\cite{hatc}, Proposition $1.31$), loops based at $P$ whose homotopy class lie in $G\subset \pi_1(C_{QR})$ will lift to loops in $\tilde{C}$ based at $\tilde{P}$. Thus $\alpha_i \in G$ will lift to a unique, upto homotopy loop $\tilde{\alpha}_i$ based at $\tilde{P}$ such that $q_*(\tilde{\alpha}_i)=\alpha_i$. 

\begin{prop}\label{coverlem}
$q^*(\frac{df}{f}) = 0$ in $H^1(\tilde{C})$. Hence there is a function, which we call $\log(q^*(f))$, defined on $\tilde{C}$ such that $d\log(q^*(f))=q^*(\frac{df}{f})$. 

\end{prop}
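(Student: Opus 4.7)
The plan is to reduce the vanishing of $q^*(df/f)$ in $H^1(\tilde{C})$ to the covering-theoretic fact that $\pi_1(\tilde{C}, \tilde{P}) \subseteq \Ker(f_*)$ built into the construction of $q$, and then to produce the primitive $\log(q^*(f))$ as a lift through the universal cover of $\C^*$.

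First I would observe that $df/f = f^*(dz/z)$, where $dz/z$ generates $H^1(\CP^1 \setminus \{0,\infty\}; \C) \cong \C$, so
\[
q^*\!\left(\frac{df}{f}\right) \;=\; (f\circ q)^*\!\left(\frac{dz}{z}\right),
\]
and the period of this form over any loop $\tilde{\gamma}$ in $\tilde{C}$ reads off (up to $2\pi i$) the winding number of the image loop $(f\circ q)_*(\tilde{\gamma})$ around $0 \in \C^*$. By the very definition of the cover in \eqref{cover}, $q_*$ identifies $\pi_1(\tilde{C}, \tilde{P})$ with $G \subseteq \Ker(f_*)$, so $(f\circ q)_* = f_* \circ q_*$ is the zero homomorphism on $\pi_1(\tilde{C}, \tilde{P})$. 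Hence every such winding number vanishes, $q^*(df/f)$ is a closed $1$-form with trivial periods on the connected complex manifold $\tilde{C}$, and therefore exact.

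To name the primitive explicitly, the same triviality of $(f\circ q)_*$, combined with the standard lifting criterion, lets me lift the holomorphic map $f\circ q \colon \tilde{C} \to \C^*$ through the universal covering $\exp \colon \C \to \C^*$ to a holomorphic function $L \colon \tilde{C} \to \C$ with $\exp(L) = q^*(f)$. Setting $\log(q^*(f)) := L$ then yields $d\log(q^*(f)) = dL = q^*(df/f)$, as required, and also pins down the primitive up to an additive constant in $2\pi i\,\Z$. There is no substantive obstacle here: the proposition is essentially a reformulation of the defining property of $q$ as the cover corresponding to $\Ker(f_*) \cap \,\langle \alpha_i\rangle$, and the only choice to be made is the branch of $\log$ at the base point $\tilde{P}$.
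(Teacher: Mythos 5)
Your proposal is correct and rests on exactly the same key fact as the paper's proof: every period of $q^*(df/f)$ over a loop in $\tilde{C}$ equals $\int_{f_*q_*(\tilde\gamma)} dz/z$, which vanishes because $q_*\pi_1(\tilde{C},\tilde{P})=G\subseteq\Ker(f_*)$ by construction of the cover. The only cosmetic difference is that the paper verifies vanishing on the basis $[\tilde\alpha_i]$ of $H_1(\tilde C)$ via the adjunction of $q^*$ and $q_*$ and then defines the primitive as $\int_{\tilde P}^{x}q^*(df/f)$, whereas you phrase the vanishing via winding numbers and obtain the primitive by lifting $f\circ q$ through $\exp\colon\C\to\C^*$; these are equivalent.
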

\begin{proof}  From Lemma \ref{commutator}, 
$$H_1(\tilde{C}) \simeq G/[G,G] \simeq V \simeq  N \cdot \pi_1(C)/[\pi_1(C;P),\pi_1(C;P)] \simeq N \cdot H_1(C) \simeq H_1(C)$$ 
By the de Rham isomorphism,  $H^1(\tilde{C}) \simeq H^1(C)$. 
 
The maps $q^*$ and $q_*$ are adjoint with respect to the de Rham pairing -- namely if $\sigma \in H_1(\tilde{C})$ and $\omega \in H^1(C_{QR},\C)$ then 
 $$\int_{q_*(\sigma)} \omega = \int_{\sigma} q^*(\omega).$$
 Further,  $q^*(\omega)$ is $0$  in $H^1(\tilde{C})$  if and only if $\int_{q_*(\sigma)} \omega=0$ for all $\sigma \in H_1(\tilde{C})$.  Applying this to $\omega=\frac{df}{f}$  and using the fact that $[\tilde{\alpha}_i], 1 \leq i \leq 2g$ give a basis for $H_1(\tilde{C})$,  we have 
$$q^*(\frac{df}{f})=0 \in H^1(\tilde{C})  \Leftrightarrow \int_{[\tilde{\alpha}_i]} q^*(\frac{df}{f})=0  \text { for all } i \Leftrightarrow  \int_{[\alpha_i]} \frac{df}{f}=0 \text{ for all } i.$$ 
The map $f$ induces 
$$f_*:H_1(C_{QR}) \longrightarrow H_1(\CP^1-\{0,\infty\}).$$
The form $\frac{df}{f}=f^*(\frac{dz}{z})$. Hence, one has 
$$\int_{[\alpha_i]} \frac{df}{f}=\int_{[\alpha_i]} f^*(\frac{dz}{z})=\int_{f_*([\alpha_i])} \frac{dz}{z}.$$
However, since $\alpha_i \in G$ and by choice $G \subset \Ker(f_*)$, we have $f_*(\alpha_i)=0$ so $f_*([\alpha_i])=0$ and finally  
$$\int_{f_*([\alpha_i])} \frac{dz}{z}=0$$
Hence $q^*(\frac{df}{f})=0 \in H^1(\tilde{C})$. Therefore integration of $\frac{df}{f}$ is path independent and we have a well defined function 
$$\log(q^*(f))(x)=\int_{\tilde{P}}^x q^*(\frac{df}{f})$$
on $\tilde{C}$. Note  that $\log(q^*(f)(\tilde{P}))=0$. 

\end{proof}

Hence the space $V$ can be understood as the homology of the space $\tilde{C}$ and the map $q_*$ gives a rational splitting of the map $i_*$.  We also have the following description of $V_{\Q}$. 

\begin{lem} Let $f:C_{QR} \longrightarrow \CP^1-\{0,\infty \}$ be the map with divisor $\div(f)=NQ-NR$ and $f(P)=1$ and $V=G/[G,G]$ as above. Let $W_{\Q}=\Ker(f_*:H^ 1(C_{QR})_{\Q} \longrightarrow H^1(\CP^1-\{0,\infty\}))$. Then $V_{\Q}=W_{\Q}$
\end{lem}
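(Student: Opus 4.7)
My plan is to proceed by a direct dimension count after first establishing the containment $V_{\Q} \subseteq W_{\Q}$. (Here I interpret $W_{\Q}$ via the homology pushforward, which is the natural parallel to the definition of $V$ in terms of $G = \ker f_*$ on fundamental groups; the cohomological formulation in the statement agrees with this under Poincar\'e--Lefschetz duality.)

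First, for the containment: by construction of $G$, every loop representing an element of $V$ is the abelianization of a loop $\alpha \in \pi_1(C_{QR};P)$ with $f_*(\alpha) = 1$ in $\pi_1(\CP^1 \setminus \{0,\infty\};1)$. By naturality of the Hurewicz map, the homological pushforward $f_*\colon H_1(C_{QR})_{\Q} \to H_1(\CP^1 \setminus \{0,\infty\})_{\Q}$ factors through $f_*$ on $\pi_1$, so the image of each generator $[\alpha_i]$ of $V$ vanishes under $f_*$ on $H_1$. This gives $V_{\Q} \subseteq W_{\Q}$.

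Next, I compare dimensions over $\Q$. By Lemma \ref{commutator}, $V \cong \Z^{2g_C}$. To see that the natural map $V \to H_1(C_{QR})$ is injective over $\Q$, I would use the basis $\{[\alpha'_i]\}_{i=1}^{2g_C} \cup \{[\beta_Q]\}$ of $H_1(C_{QR})$ (which has rank $2g_C + 1$, as $C_{QR}$ is a closed genus $g_C$ surface with two punctures) and the explicit relation $\alpha_i = (\alpha'_i)^N \beta_Q^{-m_i}$ established earlier. This gives $[\alpha_i] = N[\alpha'_i] - m_i[\beta_Q]$ in $H_1(C_{QR})$; these $2g_C$ classes are manifestly linearly independent, so the image of $V_{\Q}$ has dimension $2g_C$. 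Meanwhile, $H_1(\CP^1 \setminus \{0,\infty\})_{\Q} \cong \Q$ and $f_*[\beta_Q] = N \neq 0$, so $f_*$ is surjective on $H_1$ over $\Q$; hence $\dim_{\Q} W_{\Q} = (2g_C + 1) - 1 = 2g_C$.

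Since $V_{\Q} \subseteq W_{\Q}$ and both have dimension $2g_C$ over $\Q$, they coincide. There is no real obstacle in this argument; the only point requiring a moment of care is the bookkeeping that identifies the cohomological kernel in the statement with the homological kernel, which one may do either by Poincar\'e--Lefschetz duality between $H_1(C_{QR})$ and $H^1_c(C_{QR})$ (the natural dual home of the Poincar\'e duals $V^*$ of elements of $V$ introduced just before the lemma), or by noting that $f$ is a finite map so that $f^*$ and $f_*$ on cohomology are Poincar\'e dual to the corresponding maps on homology.
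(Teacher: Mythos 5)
Your proof is correct and follows essentially the same route as the paper: establish $V_{\Q}\subseteq W_{\Q}$ from $G\subseteq\Ker(f_*)$, then conclude by a dimension count showing both are codimension-one subspaces of the $(2g_C+1)$-dimensional space $H_1(C_{QR})_{\Q}$. You supply more detail than the paper (the explicit linear independence of the classes $N[\alpha'_i]-m_i[\beta_Q]$ and the surjectivity of $f_*$ via $f_*[\beta_Q]=N$), and you correctly flag the homology/cohomology notational slip in the statement, but the argument is the same.
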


\begin{proof} Since $V \subset \Ker(f_*)$, $V_{\Q} \subset W_{\Q}$. However, both $V_{\Q}$ and $W_{\Q}$ are  subspaces of codimension $1$ in $H_1(C_{QR})_{\Q}$. Hence they are isomorphic. 
\end{proof}

Note that it does not appear to be true that $V=\Ker(f_*)$ as $\ZZ$-modules. Intrinsically, the reason why there is such a $V$ is the following. If $C$ and $C_{QR}$ are as above, there is an exact sequence of mixed Hodge structures 
$$ 0 \longrightarrow \ZZ(1) \longrightarrow H_1(C_{QR}) \longrightarrow H_1(C) \longrightarrow 0$$
induced by the inclusion map. Hence $H_1(C_{QR})$ determines a class in $\Ext(H_1(C),\ZZ(1))$. From the Carlson isomorphism one knows 
$$\Ext_{MHS} (H_1(C),\ZZ(1)) \simeq \Ext_{MHS}(\ZZ(-1),H^1(C)) \simeq CH^1_{hom}(C)$$
and the class determined by  $H_1(C_{QR})$ is nothing but the class of $Q-R$ in $CH^1_{hom}(C)$. Since there exists a function $f$ with $\div(f)=NQ-NR$ it implies that this sequence splits rationally. Hence there is a map 
$$p:H_1(C_{QR})_{\Q} \longrightarrow  \Q(1)$$ 
which splits the exact sequence. This map can be seen to be 
$$p(\sigma)= \int_{\sigma}  \frac{df}{f} = \int_{f_*(\sigma)} \frac{dz}{z} $$
and if $V_{\Q}$ is the kernel, then $V_{\Q} \simeq H_1(C)_{\Q}$. Clearly $\sigma \in \Ker(p) \Leftrightarrow \sigma \in \Ker(f_*)$.  Hence $\Ker(f_*)$ is isomorphic to $H_1(C)_{\Q}$. 
The  $V$ defined above is only contained in $\Ker(f_*)$  but is a subgroup of  the integral homology $H_1(C)$ -- so has a little more information.

\subsection{The Carlson Representative of $e^4_{QR,P}$}

The Carlson representative of $e^4_{QR,P}$  is given by 
$$p_1 \circ r_{\ZZ} \circ s_F \circ i_3,$$
where 

\begin{itemize}
\item  $p_1$ is the projection of $N \cdot  H^3_{QR,P} \simeq H^1(C) \oplus \otimes^2 H^1(C)  \stackrel{p_1}{\longrightarrow} H^1(C)$.
\item  $i_3$ is the inclusion map $\otimes^3 H^1(C) \stackrel{i_3}{\hookrightarrow} \otimes^3 H^1(C) \oplus \otimes^2 H^1(C)$. 
\end{itemize}
To describe $s_F$ we need a little more. Let $C_{\bullet}=C-\{\bullet\}$ for $\bullet \in \{Q,R\}$. The inclusion map 
$$i_{\bullet}:C_{\bullet} \hookrightarrow C$$
induces isomorphisms on the first homology and cohomology groups -- and in what follows we will identify elements of $H_1(C_{\bullet})$  with their images in $H_1(C)$ and similarly elements of  $H^1(C)$ with their images in $H^1(C_{\bullet})$.

Recall  $\tilde{\ominus}_B$ denotes the generalised Baer difference. Let 
$$s_F \circ i_3: \otimes^3 H^1(C) \longrightarrow N\cdot H^4_{QR,P} \simeq  N\cdot \left( (J_{Q,P}/J_{Q,P}^4)^*   \tilde{\ominus}_B (J_{R,P}/J_{R,P}^4)^* \right)$$
 be the  section preserving the Hodge filtration given by 
 $$s_F(dx_i \otimes dx_j \otimes dx_k )=(I^{ijk}_Q,I^{ijk}_R).$$
Here  $I^{ijk}_{\bullet} \in (J_{\bullet,P}/J_{\bullet,P}^4)^*$ for $\bullet \in \{Q,R\}$  are iterated integrals with 
\begin{equation}\label{iteratedformula}
I^{ijk}_{\bullet}= N \left(\int dx_i dx_j dx_k +  dx_i \mu_{jk,\bullet}+\mu_{ij,\bullet}dx_k + \mu_{ijk,\bullet}  \right).
\end{equation}
where $\mu_{ij,\bullet}$,  $\mu_{jk,\bullet}$ and $\mu_{ijk,\bullet}$ are smooth, logarithmic $1$-forms on $C_{\bullet}$ such that 
\begin{enumerate}\label{it123}
\item $d\mu_{jk,\bullet}+dx_j \wedge dx_k=0$
\item $d\mu_{ij,\bullet}+dx_i \wedge dx_{j}=0$
\item $dx_i \wedge \mu_{jk,\bullet} + \mu_{ij,\bullet}\wedge dx_k +d\mu_{ijk,\bullet}=0.$
\end{enumerate}

There are inclusion maps of $C_{QR}$ into $C_{Q}$ and $C_{R}$ and we can pull back the forms $dx_i$, $\mu_{ij,\bullet}$ and $\mu_{ijk,\bullet}$ to $C_{QR}$ and consider all the forms as forms on $C_{QR}$.   To compute the element of $\Hom(\otimes^3 H^1(C)_{\C}, H^1(C)_{\C})$ obtained as the projection under $p_1$,  we describe it as an element of $H_1(C)_{\C}^*=\Hom(H_1(C),\C)=H^1(C,\C)$.  The integrands $I^{ijk}_{\bullet}$ are made up of forms on $C_{QR}$ and so to compute it on elements of $H_1(C)$ we have to choose an embedding of $H_1(C)$ in $H_1(C_{QR})$. This is precisely what the subgroup $V$ gives us. 

Hence from now on if $\alpha$ is a homology class in $H_1(C)$ we think of it as an element of $H_1(C_{QR})$ by identifying it with its image in $V=q_*(H_1(\tilde{C}))$. Let $V^*$ denote its dual in $H^1_c(C_{QR})$. The Poincar\'e dual of a element of $V$ also lies in $V^*$.

The map from 
$$H^1(C) \longrightarrow (H^1(C) \oplus H^1(C))/\Delta_{H^1(C)}$$ 
is given by 
$$x \longrightarrow (x,-x).$$ 
Further, if $\alpha$ is a loop based at $P$ on $C_{QR}$,   the class in $H_1(C)=J_{\bullet,P}/J_{\bullet,P}^{2}$ corresponding to it is $1-\alpha$. So one has $p_1 \circ r_{\ZZ} \circ s_F \circ i_3 \in \Hom(\otimes^3 H^1(C)_{\C},H^1(C)_{\C})$. As an integral, it is 
$$p_1 \circ r_{\ZZ} \circ s_F \circ i_3(dx_i\otimes dx_j\otimes dx_k)(\alpha)=\int_{1-\alpha} I^{ijk}_Q - \int_{1-\alpha} I^{ijk}_R$$
where the first $1-\alpha$ is the class in $H_1(C_Q)$ and the second is the class in $H_1(C_R)$. They are both carried to the same class in $V$ under the isomorphism, so we can take the difference of the integrals when we consider $\alpha$ as a loop in $C_{QR}$ whose corresponding homology class lies in $V$. This resulting expression is 
$$\int_{1-\alpha} I^{ijk}_Q - \int_{1-\alpha} I^{ijk}_R= N \left(\int_{1-\alpha} dx_i \left(\mu_{jk,Q}-\mu_{jk,R}\right)+\left(\mu_{ij,Q} -\mu_{ij,R}\right) dx_k + \left(\mu_{ijk,Q}-\mu_{ijk,R}\right) \right).$$  
We can choose the logarithmic forms 
$\mu_{ij,\bullet}$ and $\mu_{ijk,\bullet}$, for $\bullet \in \{Q,R\}$,  satisfying the following 
\begin{itemize}
\item $\mu_{ij,\bullet}=-\mu_{ji,\bullet}$.
\item For $|i-j|\neq g_{C}$, $\mu_{ij,\bullet}$ is  smooth on $C$,  as $d\mu_{ij,\bullet}=dx_j \wedge dx_i=0$. As $H^2(C_{QR},\ZZ)=0$ and $\mu_{ij,\bullet}$  is  smooth, it is  orthogonal to all closed forms, that is, $\mu_{ij,\bullet}\wedge dx_k$ is exact. 
\item $\mu_{i\sigma(i),\bullet}$  has a logarithmic singularity at $\bullet$ with residue $c(i)$. 
\item $\mu_{ij,Q}-\mu_{ij,R}=0$ if $|i-j| \neq g_C$ as forms on $C_{QR}$. 
\item $\mu_{i\sigma(i),Q}-\mu_{i\sigma(i),R}=\frac{c(i)}{N}\frac{df}{f}$, where  $f=f_{QR}$ is a function such that $\div(f_{QR})=NQ-NR$. We can normalise $f_{QR}$ once again by requiring that $f_{QR}(P)=1$. 
\label{properties}
\end{itemize}

In terms of our basis of  forms of  $H^1(C)$,  $\Omega \in \otimes^2 H^1(C)$ is 
$$\Omega=\sum_{i=1}^{g_C} dx_i \otimes dx_{(i+g_C)} - dx_{(i+g_C)} \otimes dx_i=\sum_{i=1}^{2g_C} c(i)dx_i \otimes dx_{\sigma(i)}$$
With these choices of $\mu_{ij,\bullet}$ and $\mu_{ijk,\bullet}$, we have the following theorem:

\begin{thm}\label{imthm} 
Let  ${\bf G}_{QR,P} \in \Hom(H^1(C)(-1)_{\C},H^1(C)_{\C})$ be the Carlson representative  corresponding to the extension class $J_{\Omega}^*(e^4_{QR,P})$. It is given by 
$${\bf G}_{QR,P}(dx_k)(\alpha_j)=p_1 \circ r_{\ZZ} \circ s_F \circ i_3( dx_k \otimes  \Omega) (\alpha_j)= (2g_C+1)  \int_{\alpha_j}  \frac{df}{f}dx_k - N \int_{\alpha_j} W(dx_k).$$
in $J(\Hom(H^1(C)(-1),H^1(C)))$, where 
$$W(dx_k)=\sum_{i=1}^{2g_C} c(i)( \mu_{k i \sigma(i),Q} -\mu_{ki\sigma(i),R})$$
is a $1$-form on $C_{QR}$ which satisfies 
$$d W(dx_k)=(2g+1) \frac{dx_k}{N} \wedge \frac{df}{f}$$
\end{thm}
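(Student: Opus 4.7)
The plan is to unwind the Carlson representative explicitly, reduce everything to a computation with the $\mu$-forms, and conclude via the shuffle relation for iterated integrals. Since $J_{\Omega}$ sends $dx_{k}$ to $dx_{k}\otimes\Omega=\sum_{i=1}^{2g_{C}}c(i)\,dx_{k}\otimes dx_{i}\otimes dx_{\sigma(i)}$, linearity of the Carlson representative gives
$${\bf G}_{QR,P}(dx_{k})(\alpha)=\sum_{i=1}^{2g_{C}}c(i)\,\bigl(p_{1}\circ r_{\ZZ}\circ s_{F}\circ i_{3}\bigr)(dx_{k}\otimes dx_{i}\otimes dx_{\sigma(i)})(\alpha).$$
For each summand I substitute formula \eqref{iteratedformula} with the relabelling $(i,j,k)\mapsto (k,i,\sigma(i))$ and take the $Q$-minus-$R$ difference; the purely holomorphic length-three piece $\int dx_{i}dx_{j}dx_{k}$ cancels, and only three contributions involving the differences of $\mu$-forms survive.

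I then apply the normalisations listed after \eqref{iteratedformula}. Because $\mu_{ki,Q}-\mu_{ki,R}$ vanishes except when $i=\sigma(k)$, in which case it equals $\tfrac{c(k)}{N}\tfrac{df}{f}$, the middle contribution collapses to a single term, and the identity $c(\sigma(k))=-c(k)$ converts it into $-\tfrac{1}{N}\tfrac{df}{f}\,dx_{k}$. The first contribution uses $\mu_{i\sigma(i),Q}-\mu_{i\sigma(i),R}=\tfrac{c(i)}{N}\tfrac{df}{f}$ together with $c(i)^{2}=1$ summed over $2g_{C}$ indices to give $\tfrac{2g_{C}}{N}\,dx_{k}\tfrac{df}{f}$. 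The third piece is by definition $W(dx_{k})$. Since $\int_{1-\alpha}=-\int_{\alpha}$ on iterated integrals of length $\geq 1$ (the constant loop kills all such), this produces
$${\bf G}_{QR,P}(dx_{k})(\alpha)=-2g_{C}\int_{\alpha}dx_{k}\tfrac{df}{f}+\int_{\alpha}\tfrac{df}{f}\,dx_{k}-N\int_{\alpha}W(dx_{k}).$$

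The decisive step is the shuffle identity of Lemma \ref{basicproperties}(2), which gives $\int_{\alpha}dx_{k}\tfrac{df}{f}+\int_{\alpha}\tfrac{df}{f}\,dx_{k}=\bigl(\int_{\alpha}dx_{k}\bigr)\bigl(\int_{\alpha}\tfrac{df}{f}\bigr)$. Here the hypothesis $\alpha\in V$ is essential: since $V_{\Q}=\Ker(f_{*})$, one has $\int_{\alpha}\tfrac{df}{f}=\int_{f_{*}\alpha}\tfrac{dz}{z}=0$, and hence $\int_{\alpha}dx_{k}\tfrac{df}{f}=-\int_{\alpha}\tfrac{df}{f}\,dx_{k}$. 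Substituting, the two length-two integrals collapse into $(2g_{C}+1)\int_{\alpha}\tfrac{df}{f}\,dx_{k}$, yielding the claimed formula. This is precisely the point at which the subgroup $V$, forced by the function $f_{QR}$, is indispensable — without that rational splitting there is no way to merge the $\tfrac{2g_{C}}{N}$ and $\tfrac{1}{N}$ pieces into a single $(2g_{C}+1)$-multiple.

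For the identity $dW(dx_{k})=\tfrac{2g_{C}+1}{N}\,dx_{k}\wedge\tfrac{df}{f}$, I differentiate $W(dx_{k})$ term by term and invoke property (3) of the $\mu$-forms to replace $d\mu_{ki\sigma(i),\bullet}$ by $-(dx_{k}\wedge\mu_{i\sigma(i),\bullet}+\mu_{ki,\bullet}\wedge dx_{\sigma(i)})$. Taking the $Q-R$ difference and summing with weights $c(i)$ invokes exactly the same combinatorics as in the integral computation, now at the level of $2$-forms: the first sum contributes $\tfrac{2g_{C}}{N}\,dx_{k}\wedge\tfrac{df}{f}$, and the unique surviving index $i=\sigma(k)$ in the second sum contributes $\tfrac{1}{N}\,dx_{k}\wedge\tfrac{df}{f}$. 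The main obstacle throughout is the bookkeeping: tracking simultaneously the anti-symmetry $\mu_{ij}=-\mu_{ji}$, the identity $c(\sigma(k))=-c(k)$, the sign from pairing against the augmentation-ideal element $1-\alpha$, and wedge anticommutation; the only conceptual input is the vanishing of $\int_{\alpha}\tfrac{df}{f}$ on $V$.
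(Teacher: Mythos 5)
Your proposal is correct and follows essentially the same route as the paper's proof: expand $dx_k\otimes\Omega$ into the $c(i)\,dx_k\otimes dx_i\otimes dx_{\sigma(i)}$ summands, substitute the explicit section \eqref{iteratedformula}, let the normalisations of the $\mu_{ij,\bullet}$ collapse the differences to multiples of $\frac{df}{f}$, and merge the two length-two terms via the shuffle relation together with $\int_{\alpha}\frac{df}{f}=0$ for $\alpha\in V$; the computation of $dW(dx_k)$ from property (3) is likewise the paper's argument. No gaps.
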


\begin{proof} Let $S_F$ denote the map $S_F=s_F \circ i_3 \circ J_{\Omega}:H^1(C)(-1) \rightarrow N \cdot H^4_{QR,P}$. This is given by 
$$S_F(dx_k)=\sum_{i=1}^{2g_C} c(i) s_F( dx_k \otimes dx_i \otimes dx_{\sigma(i)})$$
From \eqref{iteratedformula} one has 
$$S_F(dx_k)=\left( \sum_{i=1}^{2g_C} c(i) \int I_Q^{ki\sigma(i) },\sum_{i=1}^{2g_C} c(i) \int  I_R^{ki\sigma(i)} \right)$$
Evaluating on a loop $\alpha_j$ based at $P$ using the maps described above, this is 
$$\sum_{i=1}^{2g_C} \int_{1-\alpha_j} c(i)   \left( I_Q^{ki\sigma(i)} -  I_R^{k i \sigma(i)} \right)$$
$$\sum_{i=1}^{2g_C} N  \left( \int_{1-\alpha_j} c(i) dx_k (\mu_{i\sigma(i),Q} - \mu_{i,\sigma(i),R}) + (\mu_{ki,Q}-\mu_{ki,R}) dx_{\sigma(i)} + (\mu_{k i \sigma(i),Q}-\mu_{k i \sigma(i),R})\right)$$
From the choice of the forms $\mu_{ij,\bullet}$ and $\mu_{ijk,\bullet}$ above,  the leading terms and several of the lower order terms cancel out and
$$ \mu_{ki,Q}-\mu_{ki,R}=c(k) \delta_{k\sigma(i)} \frac{1}{N} \frac{df}{f} $$
 and 
 $$ \mu_{i\sigma(i),Q}-\mu_{i\sigma(i),R}=c(i)\frac{1}{N}\frac{df}{f}.$$
 Since $c(i)^2=1$ what remains is 
$$   \sum_{i=1}^{2g_C}  \int_{1-\alpha_j} dx_k \frac{ df}{f}  -  \int_{1-\alpha_j} \frac{df}{f} dx_k  +  N \sum_{i=1}^{2g_C} c(i) \int_{1-\alpha_j} \left(\mu_{k i \sigma(i),Q}-\mu_{k i \sigma(i),R} \right).$$
Let 
$$W(dx_k)=  \sum_{i=1}^{2g_C} c(i)   \left(\mu_{k i \sigma(i),Q}-\mu_{k i \sigma(i),R} \right).$$
Since integration over a point, which corresponds to the constant loop 1, is $0$ and $\int_{\alpha_j} \frac{df}{f}=0$ by choice of $\alpha_j$, using Lemma \ref{basicproperties} (2)  the integral becomes 
$${\bf G}_{QR,P}(dx_k)(\alpha_j)=    2g_C \int_{1-\alpha_j} dx_k \frac{df}{f} - \int_{1-\alpha_j} \frac{df}{f} dx_k  + N \int_{1-\alpha_j} W(dx_k).$$ 
$$=-(2g_C+1)\int_{\alpha_j}  dx_k \frac{df}{f}  -  N  \int_{\alpha_j}W(dx_k).  $$
Now consider 
$$dW(dx_k)= \sum_{i=1}^{2g_C} c(i)  d \left(\mu_{k i \sigma(i),Q}-\mu_{k i \sigma(i),R} \right).$$
From the choice of $\mu_{ijk,\bullet}$, one has 
$$d\mu_{ijk,\bullet}=-dx_i \wedge \mu_{jk,\bullet} - \mu_{ij,\bullet}\wedge dx_k.$$
So the sum becomes 
$$dW(dx_k)=\sum_{i=1}^{2g_C}  -c(i)\left( \left(dx_k \wedge \mu_{i\sigma(i),Q} +\mu_{ki,Q} \wedge dx_{\sigma(i)} \right) -\left( dx_k \wedge \mu_{i\sigma(i),R} + \mu_{ki,R} \wedge dx_{\sigma(i)} \right)\right) $$
$$=\sum_{i=1}^{2g_C} -c(i) \left(  dx_k\wedge (\mu_{i\sigma(i),Q}-\mu_{i\sigma(i),R}) + (\mu_{ki,Q}-\mu_{ki,R}) \wedge dx_{\sigma(i)} \right).$$
In the second sum, only one term survives and one has 
$$=-c(\sigma(k))(\mu_{k\sigma(k),Q}-\mu_{k\sigma(k),R}) \wedge dx_k + \sum_{i=1}^{2g_C} -c(i) \left(dx_k \wedge \frac{c(i)}{N} \frac{df}{f}\right)$$
$$=-c(\sigma(k))(\frac{c(\sigma(k))}{N} \frac{df}{f}) \wedge dx_k + \sum_{i=1}^{2g_C} -c(i) \left( dx_k \wedge \frac{c(i)}{N} \frac{df}{f}\right)$$
$$=-\frac{(2g_C+1)}{N} \frac{df}{f} \wedge dx_k=\frac{(2g_C+1)}{N} dx_k \wedge \frac{df}{f}.$$

\end{proof}

We have computed the Carlson representative $\bf{G}_{QR,P}$ of our class  in $\Ext(H^1(C)(-1),H^1(C))$.  We now tensor with $H^1(C)$ and pull back using the map $\otimes \Omega:\ZZ(-1)\longrightarrow \otimes^2 H^1(C)$.  This gives us an element  of $\Ext(\ZZ(-2), \otimes^2 H^1(C))$. We denote its Carlson representative by ${\bf F}_{QR,P}$.                                                  
\begin{lem}\label{fqrplemma} The Carlson representative of the class in $\Ext(\ZZ(-2),\otimes^2 H^1(C))$ is given by 

$${\bf F}_{QR,P} = ({\bf G}_{QR,P} \otimes Id) \circ \otimes \Omega$$ 
in $(\otimes^2 H^1(C)_{\C})^*$. On an element $\alpha_j \otimes \alpha_k$ 
 it is given by
\begin{equation}
{\bf F}_{QR,P}(\Omega)(\alpha_j \otimes \alpha_k)=c(\sigma(k))N \left(\int_{\alpha_j} (2g_C+1)\frac{df}{f}dx_{\sigma(k)} - NW(dx_{\sigma(k)})\right) 
\label{fqrp}
\end{equation}
\end{lem}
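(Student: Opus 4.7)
The plan is to exploit the functoriality of the Carlson representative under the two standard operations appearing in the construction of ${\bf F}_{QR,P}$: tensor product with an object and pullback along a morphism of mixed Hodge structures. By definition (see \eqref{fref1}), $\epsilon^4_{QR,P}$ is obtained from $J_{\Omega}^*(e^4_{QR,P})$ by first tensoring the extension with $H^1(C)$ and then pulling it back along the morphism $\otimes\Omega:\ZZ(-1)\to\otimes^2 H^1(C)$. Since tensoring an extension with a fixed object replaces its Carlson representative by its tensor with the identity, and pulling back precomposes with the structural map, the first identity
$${\bf F}_{QR,P} = ({\bf G}_{QR,P}\otimes \mathrm{Id})\circ \otimes\Omega$$
follows at once from Theorem \ref{imthm}.

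To get the explicit formula, I would expand $\Omega$ in the harmonic basis as $\Omega=\sum_{i=1}^{2g_C} c(i)\, dx_i\otimes dx_{\sigma(i)}$, apply $({\bf G}_{QR,P}\otimes \mathrm{Id})$ term by term, and evaluate at $\alpha_j\otimes \alpha_k$. This produces the sum
$$\sum_{i=1}^{2g_C} c(i)\, {\bf G}_{QR,P}(dx_i)(\alpha_j)\cdot \langle dx_{\sigma(i)},\alpha_k\rangle,$$
where the last pairing is the de Rham pairing between $H^1(C)$ and $H_1(C_{QR})$ in the sense used throughout the previous subsection: because $\alpha_k$ is identified with its image in $V\subset H_1(C_{QR})$ under $j_N$, and $i_*[\alpha_k]=N\cdot i_*[\alpha'_k]$, one has $\int_{\alpha_k} i^*(dx_{\sigma(i)})=N\delta_{k,\sigma(i)}$. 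Since $\sigma$ is an involution, only the single index $i=\sigma(k)$ survives, producing the prefactor $c(\sigma(k))N$. Substituting the expression for ${\bf G}_{QR,P}(dx_{\sigma(k)})(\alpha_j)$ from Theorem \ref{imthm} then yields exactly \eqref{fqrp}.

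The only genuine obstacle is bookkeeping: one has to verify that the Tate twists track correctly, so that the pullback by $\otimes\Omega$ actually lands in $\Ext(\ZZ(-2),\otimes^2 H^1(C))$, and that the factor $N$ arising from the $N$-fold nature of the cover $q:\tilde{C}\to C_{QR}$ (equivalently, from the fact that $V$ is the index-$N^{2g}$ subgroup $N\cdot H_1(C)$ of $H_1(C)$ under $i_N$) is not accidentally double-counted against the $N$ already absorbed into the normalisation of the iterated integrals $I^{ijk}_\bullet$ in \eqref{iteratedformula}. Once these compatibilities are checked, no further analytic work is required and the lemma follows by direct substitution.
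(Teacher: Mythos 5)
Your proposal is correct and follows essentially the same route as the paper: the first identity is taken as the definitional functoriality of the Carlson representative under tensoring and pullback, and the explicit formula is obtained by expanding $\Omega=\sum_i c(i)\,dx_i\otimes dx_{\sigma(i)}$, using $\int_{\alpha_k}i^*(dx_{\sigma(i)})=N\delta_{k\sigma(i)}$ so that only the $i=\sigma(k)$ term survives with prefactor $c(\sigma(k))N$, and then substituting the expression for ${\bf G}_{QR,P}(dx_{\sigma(k)})(\alpha_j)$ from Theorem \ref{imthm}. The bookkeeping caveats you raise (Tate twists, the normalisation $N$ in $I^{ijk}_\bullet$ versus the $N$ from the pairing on $V$) are sensible but do not alter the argument, which the paper carries out in exactly this way.
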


\begin{proof} Recall that 
$$\Omega=\sum_1^{2g_C} c(i)dx_i \otimes dx_{\sigma(i)}.$$
From above we have 
$$({\bf G}_{QR,P} \otimes Id )(\Omega)(\alpha_j \otimes \alpha_k) = \sum_1^{2g_C}  c(i) {\bf G}_{QR,P}(dx_i)(\alpha_j) \cdot  Id(dx_{\sigma(i)}) (\alpha_k).$$
From the choice of $\alpha_k$ one has 
$$Id(dx_{\sigma(i)}) (\alpha_k) =N\delta_{k\sigma(i)}.$$
Hence, in the sum above,  precisely one term survives, at $i=\sigma(k)$. Therefore 
$$({\bf G}_{QR,P} \otimes Id ) (\Omega)(\alpha_j \otimes \alpha_k)=N c( \sigma(k)) {\bf G}_{QR,P}(dx_{\sigma(k)})(\alpha_j).$$
In particular
\begin{align*}
{\bf F}_{QR,P}(\Omega)(\alpha_j \otimes \alpha_k)=& N c(\sigma(k)) {\bf G}_{QR,P}(dx_{\sigma(k)})(\alpha_j)\\
=&N c(\sigma(k))\left(\int_{\alpha_j} (2g_C+1)\frac{df}{f}dx_{\sigma(k)} - NW(dx_{\sigma(k)}) \right) 
\end{align*}

\end{proof}

We now use Proposition \ref{coverlem} to convert the iterated integral in to an ordinary integral.   The iterated integral term in \eqref{fqrp} is 
$$Nc(\sigma(k)) (2g_C+1)\int_{\alpha_j} \frac{df}{f} dx_k$$
which we can evaluate using Lemma \ref{basicproperties}(3) if $\frac{df}{f}$ is exact. However, $\frac{df}{f}$ is {\em not} exact on $C_{QR}$ but it is exact on $\tilde{C}$ using Proposition \ref{coverlem}.  So we do the integration on  $\tilde{C}$. Precisely, we do that as follows.

Let $\alpha$ be a loop such that $[\alpha] \in q_*(H_1(\tilde{C}))$, where $q:\tilde{C} \longrightarrow C_{QR}$ is the cover.  Let $\alpha=q_*(\tilde{\alpha})$, where $\tilde{\alpha}$ is a loop based at $\tilde{P}$ lying over the basepoint $P$ of $\alpha$. Let $\psi$ be another closed $1$-form on $C_{QR}$. 

We have 
$$\int_{\alpha} \frac{df}{f} \psi=\int_{\tilde{\alpha}} q^{*}(\frac{df}{f}) q^*(\psi)$$
From Proposition \ref{coverlem},   $q^*(\frac{df}{f})$ is exact on $\tilde{\alpha}$. In other words $q^*(\frac{df}{f})=d\log(q^*(f))$. Choose a primitive $\log(q^*(f))$ such that $\log(q^*(f)(\tilde{P}))=0$.  Using Lemma \ref{basicproperties}(3) and the fact that we have chosen $\log(q^*(f))$ with $\log(q^*(f)(\tilde{P}))=0$,
$$\int_{\tilde{\alpha}} q^*({\frac{df}{f}})q^*(\psi)= \int_{\tilde{\alpha}} \log(q^*(f))q^*(\psi)$$
Hence we have 
$$\int_{\alpha} \frac{df}{f} \psi=\int_{\tilde{\alpha}} \log(q^*(f))q^*(\psi)$$
Applying this to the case at hand  we have 
\begin{equation}
\label{eq123}
{\bf F}_{QR,P}(\Omega)(\alpha_j \otimes \alpha_k)=N c(\sigma(k))\left(\int_{\tilde{\alpha}_j} (2g_C+1) \log(q^*(f))q^*(dx_\sigma(k))- Nq^*(W(dx_{\sigma(k)}))\right).
\end{equation}
We have made a choice of $\tilde{\alpha_j}$. If we chose a different basepoint, the value of $\log(q^*(f))$ will change by $2 \pi i M$ for some $M \in \ZZ$. This will change the integral by  $2 \pi i M \int_{\alpha_j} dx_{\sigma(k)}$.  This does not affect the class in the intermediate Jacobian. 

We would like to connect the expression above, which is the Carlson representative of the  extension class $\epsilon^4_{QR,P}$, to the regulator of an explicit cycle on the Jacobian of the curve. This is done by the following revised version of Colombo's Proposition 3.3. Recall that by $C\setminus \gamma$ we mean
$$\lim_{\epsilon \rightarrow 0}  C \setminus \ngam$$
where $\ngam$ is an open tubular neighbourhood of $\gamma$ homeomorphic to $(-\epsilon,\epsilon) \times \gamma$. 

\begin{prop}[Colombo Proposition 3.3]
Let  $f=f_{QR}$ be as before and  $\psi$ a  closed $1$-form on $C_{QR}$. Let $W(\psi)$ be a $1$-form such that  $dW(\psi)=\psi \wedge \frac{df}{f}$. Hence $\Theta=\log(f)\psi+W(\psi)$ is a closed $1$ form on $C_{QR} \setminus \gamma$. Let $\alpha$  be a loop in $C_{QR}$ such that $[\alpha] \in V$ and  let $\eta_{\alpha} \in H^1_c(C_{QR})$ be the Poincar\'e dual of $[\alpha]$ constructed below. Then we have the iterated integral 
$$ \int_{\alpha} \frac{df}{f} \psi + W(\psi) =\int_{C \setminus \gamma} \eta_{\alpha} \wedge \Theta + 2\pi i  \int_{\gamma} \eta_{\alpha} \psi \,  \left(\mod\, 2\pi i \int_{\alpha} \psi \ZZ\right)$$
\label{colomboprop}
\end{prop}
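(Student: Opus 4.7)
The strategy is to convert the length-two iterated integral $\int_{\alpha}\frac{df}{f}\psi$ into an ordinary line integral on $\alpha$ by lifting to the cover $\tilde{C}$ of Proposition~\ref{coverlem} (where $df/f$ becomes exact), and then to relate the resulting line integral on $C\setminus\gamma$ to the surface integral $\int_{C\setminus\gamma}\eta_{\alpha}\wedge\Theta$ by an application of Stokes' theorem on $M_{\epsilon}=C\setminus n_{\epsilon}(\gamma)$, carefully tracking the boundary contribution produced by the $2\pi i$ jump of $\log f$ across $\gamma$.

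\textbf{Step 1 (iterated to ordinary integral).} By Proposition~\ref{coverlem}, $q^{*}(df/f)=d\log(q^{*}f)$ on $\tilde{C}$, with the primitive normalised so that $\log(q^{*}f)(\tilde{P})=0$. Lifting $\alpha$ to $\tilde{\alpha}$ and applying Lemma~\ref{basicproperties}(3),
\[
\int_{\alpha}\frac{df}{f}\psi \;=\; \int_{\tilde{\alpha}}d\log(q^{*}f)\cdot q^{*}\psi \;=\; \int_{\tilde{\alpha}}\log(q^{*}f)\,q^{*}\psi.
\]
Since $[\alpha]\in V\subseteq\Ker(f_{*})$, the algebraic intersection of $\alpha$ with $\gamma$ vanishes, so $\alpha$ may be homotoped inside $C_{QR}$ to a representative lying in $C\setminus\gamma$, on which $\log f$ is single-valued with $\log f(P)=0$. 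The lift $\log(q^{*}f)|_{\tilde{\alpha}}$ differs from $\log f$ along this representative by an integer multiple of $2\pi i$ coming from the winding around $\gamma$; absorbing this discrepancy into $2\pi i\int_{\alpha}\psi\cdot\ZZ$ and adding $\int_{\alpha}W(\psi)$ yields
\[
\int_{\alpha}\frac{df}{f}\psi + \int_{\alpha}W(\psi) \;\equiv\; \int_{\alpha}\Theta \pmod{2\pi i \int_{\alpha}\psi\,\ZZ},
\]
where closedness of $\Theta=\log f\cdot\psi+W(\psi)$ on $C\setminus\gamma$ follows from $d\Theta = (df/f)\wedge\psi + \psi\wedge(df/f)=0$.

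\textbf{Step 2 (Stokes and the jump across $\gamma$).} To relate $\int_{\alpha}\Theta$ to $\int_{C\setminus\gamma}\eta_{\alpha}\wedge\Theta$, write $\eta_{\alpha}=-d\rho$ locally in a bicollar neighbourhood of $\alpha$, with $\rho$ a smooth step function going from $1$ to $0$ across $\alpha$. If $\alpha$ is non-separating in $C_{QR}$, perform the argument on $C_{QR}$ cut open along $\alpha$, producing a matching boundary integral equal to $\int_{\alpha}\Theta$. Since $d\Theta=0$ on $C\setminus\gamma$, Stokes on $M_{\epsilon}=C\setminus n_{\epsilon}(\gamma)$ gives
\[
\int_{M_{\epsilon}}\eta_{\alpha}\wedge\Theta \;=\; -\int_{\partial M_{\epsilon}}\rho\,\Theta,
\]
and as $\epsilon\to 0$, $\partial M_{\epsilon}$ limits to two oppositely-oriented copies $\gamma^{\pm}$ of $\gamma$; the $2\pi i$ jump of $\log f$ across $\gamma$ produces a boundary contribution of the form $-2\pi i\int_{\gamma}\rho\,\psi$. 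By Lemma~\ref{basicproperties}(3),
\[
\int_{\gamma}\eta_{\alpha}\,\psi \;=\; \int_{\gamma}d(-\rho)\,\psi \;=\; -\int_{\gamma}\rho\,\psi + \rho(\gamma(0))\int_{\gamma}\psi,
\]
and choosing $\rho$ to vanish at $\gamma(0)\in\{Q,R\}$ identifies the jump contribution with $2\pi i\int_{\gamma}\eta_{\alpha}\,\psi$. Combining with the $\int_{\alpha}\Theta$ boundary term from the cut along $\alpha$, one obtains, after orientation bookkeeping,
\[
\int_{\alpha}\Theta \;=\; \int_{C\setminus\gamma}\eta_{\alpha}\wedge\Theta \;+\; 2\pi i\int_{\gamma}\eta_{\alpha}\,\psi,
\]
which, combined with Step~1, is the claimed identity modulo $2\pi i\int_{\alpha}\psi\,\ZZ$.

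\textbf{Main obstacle.} The central technical difficulty is Step~2: one must handle the case when $\alpha$ is non-separating in $C_{QR}$ via the cutting argument (producing the $\int_{\alpha}\Theta$ boundary term), verify that contributions near the endpoints $Q,R$ of $\gamma$ vanish in the $\epsilon\to 0$ limit despite the logarithmic singularities of $\log f$ and $W(\psi)$ there (using the compact support of $\eta_{\alpha}$ in $C_{QR}$), and reconcile the sign and orientation conventions for $\eta_{\alpha}=-d\rho$, for the induced orientation on $\partial M_{\epsilon}$, and for the iterated integral $\int_{\gamma}\eta_{\alpha}\psi$ so that the final identity holds on the nose modulo $2\pi i\int_{\alpha}\psi\,\ZZ$.
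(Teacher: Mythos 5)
Your overall architecture is the same as the paper's: lift to the cover $\tilde{C}$ of Proposition \ref{coverlem} so that $q^*(df/f)$ is exact and the iterated integral becomes $\int_{\tilde{\alpha}}\tilde{\Theta}$, then run Stokes on $C\setminus n_{\epsilon}(\gamma)$ against the compactly supported dual of $[\alpha]$ and extract the term $2\pi i\int_{\gamma}\eta_{\alpha}\psi$ from the $2\pi i$ jump of $\log f$ across $\gamma$. But there is a genuine gap at the heart of Step 1. You assert that because $[\alpha]\in\Ker(f_*)$ the loop $\alpha$ can be homotoped inside $C_{QR}$ to a representative lying in $C\setminus\gamma$, so that $\int_{\tilde{\alpha}}\log(q^*f)\,q^*\psi$ becomes $\int_{\alpha}\log(f)\psi$ up to a single constant $2\pi i M\int_{\alpha}\psi$. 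Vanishing of the algebraic intersection number with $\gamma$ does not give vanishing geometric intersection, and no argument is offered that the image of $\pi_1(C\setminus\gamma)\to\pi_1(C_{QR})$ contains $V$; moreover the actual generators $\alpha_i=\alpha_i'^{N}\beta_Q^{-m_i}$ used in the paper do cross $\gamma$, and if $\alpha$ crosses $\gamma$ the branch of $\log f$ shifts on each sub-arc, so the discrepancy is $2\pi i\sum_j M_j\int_{\mathrm{arc}_j}\psi$ rather than a single multiple of $2\pi i\int_{\alpha}\psi$. The paper never moves $\alpha$ off $\gamma$: it keeps the given representative, restricts $\alpha'$ to $C_{QR}\setminus n_{\epsilon}(\gamma)$ to get finitely many arcs $\alpha'^j$, obtains $\sum_j\int_{\alpha'^j}\Theta$ as the boundary term of the Stokes computation on the half-tubular neighbourhoods $D^-_{\alpha'^j}$, and only then identifies $N\sum_j\int_{\alpha'^j}\Theta-m_i\sum_s\int_{\beta_Q^s}\Theta$ with $\int_{\tilde{\alpha}}\tilde{\Theta}$ via the homeomorphisms between the arcs $\alpha'^j$ and their lifts, letting $\epsilon\to 0$. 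That matching of arcs with lifts is the real content of the proof, and your proposal replaces it with an unproven homotopy claim.

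A second, related gap is in Step 2: $\alpha=\alpha'^{N}\beta_Q^{-m_i}$ is not a simple loop, so ``a bicollar neighbourhood of $\alpha$'' and ``$\eta_{\alpha}=-d\rho$'' are not available as stated. The Poincar\'e dual referred to in the proposition has to be \emph{constructed}, as $\eta_{\alpha}=N\eta_{\alpha'}-m_i\eta_{\beta_Q}$ from the simple loops $\alpha'$ and $\beta_Q$, and the Stokes argument must be run separately for each of these and then combined with multiplicities $N$ and $-m_i$; this is also what makes the factor $N$ reappear correctly when the boundary sum is matched with $\int_{\tilde{\alpha}}\tilde{\Theta}$ (the lift of $\alpha'^{N}$ consists of $N$ consecutive copies of $\tilde{\alpha}'$). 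Finally, a smaller point: to control the dependence on the choice of step function near the endpoints of $\gamma$ the paper proves and uses $\int_{\gamma}\psi=0$ (Lemma \ref{usefullemma}), rather than your ad hoc normalisation of $\rho$ at $\gamma(0)$; your choice does not obviously survive the passage from one primitive to another, whereas the lemma makes the ambiguity vanish identically.
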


\begin{proof} 

We first need the following useful lemma.
\begin{lem} Let $\Theta$ and $\psi$ be as above. Then 
$$\int_{\gamma} \psi=0$$
\label{usefullemma}
\end{lem}

\begin{proof} Let $C \setminus \ngam$ be the manifold with boundary as before. This is a closed set so the  form $\Theta$ is closed and compactly supported  on $C \setminus \ngam$. The boundary 
$$\partial(C \setminus \ngam)=Q \times (-\epsilon, \epsilon) \cup \gamma_1 \cup \gamma_2^{-1} \cup (\epsilon,-\epsilon) \times R$$
where $\gamma_1$ and $\gamma_2$ are copies of $\gamma$. Applying Stokes theorem to $d\Theta$ we get
$$0=\int_{C\setminus \ngam} d\Theta = \int_{\partial(C \setminus \ngam)} \Theta= \int_{\gamma_1} \Theta - \int_{\gamma_2} \Theta + \int_{Q \times (-\epsilon, \epsilon)} \Theta - \int_{R \times (-\epsilon, \epsilon)} \Theta $$
Recall $\Theta=\log(f) \psi + W(\psi)$. The function $\log(f)$ differs by $2 \pi i $ on  $\gamma_1$ and $\gamma_2$. The form $W(\psi)$ is defined on all of $C_{QR}$ so the integrals over $\gamma_1$ and $\gamma_2$ cancel. Keeping track of orientations we have 
$$0= -2 \pi i \int_{\gamma_1} \psi +  \int_{Q \times (-\epsilon, \epsilon)} \Theta - \int_{R \times (-\epsilon, \epsilon)} \Theta $$
As $\epsilon \rightarrow 0$ the shows that $ -2 \pi i \int_{\gamma} \psi=0.$
\end{proof}

The subgroup $V$ is generated by the classes of $\alpha_i={\alpha'}_i^N \beta_Q^{-m_i}$ where $\alpha'_i$ is one of the `standard' generators of  $\pi_1(C)$ coming from the edges of the fundamental polygon and $\beta_Q$ is a small simple loop around $Q$. These loops satisfy $f_*(\alpha_i)=0$. 

It suffices to prove the theorem for the $\alpha_i$ and extend linearly, so from this point on we  let $\alpha=\alpha_i$, $\alpha'=\alpha'_i$. Let $\eta_{\alpha}$ be the compactly supported  Poincar\'{e} dual of $[\alpha]$ constructed as in \cite{fakr} as follows. Suppose $\delta$ is a simple loop in $C_{QR}$. Let $D=D_{\delta}$ be a tubular neighbourhood of $\delta$. We can write $D_{\delta}-\delta=D_{\delta}^+ \cup D_{\delta}^-$ with $D_{\delta}^{-}$ to the left and $D_{\delta}^{+}$ to the right of $\delta$.  Let $D_0$ be a sub-tubular neighbourhood of $\delta$ in  $D$ and $D_0^{\pm}=D_0  \cap D_{\delta}^{\pm}$. Let $G_{\delta}$ be a function such that is smooth on $C_{QR}-\delta$ and 
$$G_{\delta} \equiv  \begin{cases}  1 & \text{ on } D_0^- \cup {\delta} \\
 0 &\text{ outside } D_{\delta}^{-}\end{cases}$$ 
Define 
$$\eta_{\delta}=\begin{cases} dG_{\delta} & \text{ on $D_{\delta}-\delta$}\\0 & \text{ elsewhere}  \end{cases}$$
so the support $\Supp(\eta_{\delta}) \subset D_{\delta}^-$. One can then see that if $\psi$ is a  closed $1$-form on $C_{QR}$
$$\int_{C_{QR}} \eta_{\delta} \wedge \psi = \int_{D_{\delta}^-} dG_{\delta} \wedge \psi = \int_{D_{\delta}^-} dG_{\delta} \psi =  \int_{\partial (D_{\delta}^-)} G_{\delta} \psi =  \int_{[\delta]} \psi$$
since $G_{\delta} \equiv 1$ on $\delta$ and with this choice of orientation $\partial(D_{\delta}^-)=\delta$. 

In our case, in general $\alpha=\alpha_i={\alpha'}^N \beta_Q^{-m_i}$, is not a {\em simple} loop. However, $\alpha'$ and $\beta_Q$ are.  Let $D_{\alpha}^-=D_{\alpha'}^- \cup D_{\beta_Q}^-$. Define 
$$\eta_{\alpha}=N\eta_{\alpha'}-m_i \eta_{\beta_Q}.$$
$\eta_{\alpha}$ is supported in $D_{\alpha}^-$ and is a Poincar\'{e} dual of $[\alpha]$ as, for a $1$-form $\psi$, 
 $$\int_{C_{QR}} \eta_{\alpha} \wedge \psi =N \int_{C_{QR}}  \eta_{\alpha'} \wedge \psi  - m_i \int_{C_{QR}} \eta_{\beta_Q} \wedge \psi =\int_{N[\alpha'] - m_i [\beta_Q]} \psi = \int_{[\alpha]} \psi$$

Let $\tilde{\Theta}=q^*(\Theta)=\log(q^*(f))q^*(\psi) +q^*(W(\psi)) $. From the discussion above 
$$\int_{\alpha} \frac{df}{f} \psi + W(\psi)= \int_{\tilde{\alpha}} \tilde{\Theta}.$$
where $\tilde{\alpha}$ is a lifting of $\alpha$ to a loop in $\tilde{C}$ such that it is based at $\tilde{P}$ and $\log(q^*(f))$ is chosen such that $\log(q^*(f)(\tilde{P})=0.$ 
We would like to compute this integral. 

Let $\ngam$ be as above and  $C \setminus \ngam$ be as before. Choosing $\epsilon$ and the tubular neighbourhoods carefully, we can assume without loss of generality that $\alpha'$ and $\beta_Q$ do not pass through the points $Q$ and $R$ and that the tubular neighbourhoods $D_{\alpha'}$ and $D_{\beta_Q}$  do not intersect $(-\epsilon,\epsilon) \times Q$ and $(-\epsilon,\epsilon) \times R$.  

We have $\alpha=\alpha'^N \beta_Q^{-m_i}$. Let $\tilde{\alpha}'$ be the lift of $\alpha'$ and $\tilde{\beta}_Q$ the lift of $\beta_Q$. The restriction $\alpha'|_{C_{QR} \setminus \ngam}=\bigcup_j \alpha'^j$ is a union of a finite number of  paths $\alpha'^j$. The covering map $q$ induces a homeomorphism from each  $\alpha'^j$  to a path $\tilde{\alpha}'^j$ such that $\bigcup \tilde{\alpha}'^j=\tilde{\alpha}'|_{\tilde{C} \setminus q^{-1}(\ngam)}$. Let $D_{\alpha'^j}$, and similarly $D_{\alpha'^j}^{-}$  denote the restriction of the tubular neighbourhood $D_{\alpha'}$  of $\alpha'$ to a tubular neighbourhood of the path $\alpha'^j$. 
We have 
$$D_{\alpha'}^- \setminus  \ngam = \bigcup_j D_{\alpha'^j}^{-}$$
Hence we have 
$$\int_{C_{QR} \setminus \ngam} \eta_{\alpha'} \wedge \Theta = \int_{D_{\alpha}'^- \setminus  \ngam}  \eta_{\alpha'} \wedge \Theta=\sum_{j}  \int_{D_{\alpha'^j}^-}  \eta_{\alpha'} \wedge \Theta$$
The boundary of the  tubular neighbourhood $D_{\alpha'^j}^-$  is 
 $$\partial(D_{\alpha'^j}^-)=\alpha'^j \cup ({\gamma_1}\cap \overline{D_{\alpha'^j}^-})   \cup (\gamma_2^{-1} \cap \overline{D_{\alpha'^j}^-})$$ 
 Applying Stokes' Theorem we get
 $$\int_{D_{\alpha'^j}^-}  \eta_{\alpha'} \wedge \Theta = \int_{D_{\alpha'^j}^-} dG_{\alpha'}\Theta = \int_{\partial(D_{\alpha'^j}^-)} G_{\alpha'}\Theta$$
 $$= \int_{\alpha'^j} \Theta + \int_{\gamma_1 \cap \overline{D_{\alpha'^j}^-}} G_{\alpha'} \Theta - \int_{\gamma_2 \cap \overline{D_{\alpha'^j}^-}} G_{\alpha'}\Theta  $$ 
Summing up over $j$ we have
 $$ \sum_j \left(\int_{\gamma_1 \cap \overline{D_{\alpha'^j}^-}} G_{\alpha'} \Theta - \int_{\gamma_2 \cap \overline{D_{\alpha'^j}^-}} G_{\alpha'}\Theta \right)=\int_{\gamma_1} G_{\alpha'} \Theta -\int_{\gamma_2} G_{\alpha'} \Theta$$
 as $G_{\alpha'}$ is supported in $D_{\alpha'}^-$. Recall that the value of $\log(f)$ on $\gamma_1$ and $\gamma_2$ differ by $-2\pi i$. The value of $G_{\alpha'} W(\psi)$ is the same on both $\gamma_1$ and $\gamma_2$ hence $\int_{\gamma_1} G_{\alpha'} W(\psi)-\int_{\gamma_2} G_{\alpha'} W(\psi)=0$.  Hence we get 
 $$\int_{\gamma_1} G_{\alpha'} \Theta - \int_{\gamma_2} G_{\alpha'} \Theta = -2 \pi i \int_{\gamma} G_{\alpha'} \psi $$
Lemma \ref{usefullemma} and  Lemma \ref{basicproperties} (3) shows that the integral simplifies to 
$$\int_{\gamma_1} G_{\alpha'} \Theta - \int_{\gamma_2} G_{\alpha'} \Theta = -2 \pi i \int_{\gamma} G_{\alpha'} \psi = -2 \pi i \int_{\gamma} \eta_{\alpha'} \psi$$
So we  get 
$$  \int_{C_{QR} \setminus \ngam} \eta_{\alpha'} \wedge \Theta=\sum_j \int_{\alpha'^j} \Theta - 2 \pi i \int_{\gamma} \eta_{\alpha'} \psi$$
We can make a similar argument for $\beta_Q$. 

Combining the two we get 
$$N \sum_j \int_{\alpha'^j} \Theta - m_i  \sum_s \int_{\beta_Q^s}  \Theta = \int_{C_{QR}  \setminus \ngam} \eta_{\alpha} \wedge \Theta + 2 \pi i  \int_{\gamma} \eta_{\alpha}\psi$$
Finally, since $\eta_{\alpha}$ is compactly supported in $C_{QR}$, we can replace $C_{QR} \setminus \ngam$ with $C\setminus \ngam$ to get 
$$N \sum_j \int_{\alpha'^j} \Theta - m_i  \sum_s \int_{\beta_Q^s}  \Theta = \int_{C  \setminus \gamma} \eta_{\alpha} \wedge \Theta + 2 \pi i  \int_{\gamma} \eta_{\alpha}\psi$$

To link this to the integral over $\tilde{\alpha}$ we observe the following. The loop $\alpha'^N$ lifts to a path in $\widetilde{\alpha'^N}$ in $\tilde{C}$ which is made up of copies of $\tilde{\alpha}'$. Let $\tilde{\alpha}'_k$ denote the lift of the $k^{th}$ copy of $\alpha'$ so $\tilde{\alpha}'_k(1)=\tilde{\alpha}'_{k+1}(0)$. We can choose the homeomorphisms between $\alpha'^j$ and $\tilde{\alpha}'^j$ such that the $k^{th}$ copy of $\alpha'^j$ is homeomorphic to a path $\tilde{\alpha}'^j_k$ in $\tilde{\alpha}'_k$. So we have homeomorphisms 
$$\bigcup_{k=1}^N \bigcup_j \alpha'^j \simeq \bigcup_{k=1}^{N} \bigcup_j \tilde{\alpha}'^j_k \simeq \widetilde{\alpha'^N} - \partial(\tilde{C} \setminus q^{-1}(\ngam))$$
A similar situation holds for $\beta_Q$. Via these homeomorphisms 
$$N \sum_j \int_{\alpha'^j} \Theta - m_i  \sum_s \int_{\beta_Q^s}  \Theta =\sum_{k=1}^N \sum_j \int_{\tilde{\alpha}'^j_k} \tilde{\Theta} -  \sum_{r=1}^{m_i} \sum_s \int_{\tilde{\beta}_{Q,r}^s}  \tilde{\Theta} $$
which is 
$$\int_{\widetilde{\alpha'^N \beta_Q^{-m_i}}  - \partial(\tilde{C} \setminus q^{-1}(\ngam))} \tilde{\Theta}=\int_{\tilde{\alpha}-\partial(\tilde{C} \setminus q^{-1}(\ngam))} \tilde{\Theta}$$
Finally, as $\epsilon \rightarrow 0$ the set  $\tilde{\alpha} \cap \partial(\tilde{C} \setminus q^{-1}(\ngam))$ becomes a set of measure $0$ so we have 
 $$\lim_{\epsilon \rightarrow 0} \int_{\tilde{\alpha}-\partial(\tilde{C} \setminus q^{-1}(\ngam))} \tilde{\Theta}= \int_{\tilde{\alpha}} \tilde{\Theta}$$ 
We have made a choice of a lifting $\tilde{\alpha}$  of $\alpha$. A different choice of basepoint $\tilde{P}'$ would change the value of $\log(q^*(f))$ by $2 \pi i M$ for some $M \in \ZZ$. This would change the integral by $2 \pi i M \int_{\alpha}  \psi$. Hence this equality holds only up to $2 \pi i \int_{\alpha} \psi \ZZ$.

 \end{proof}

We have the following useful corollary to the above proposition, which says that in fact, we can replace $\eta_{\alpha}$ by any form on $C$ which is cohomologous to $i_*([\eta_{\alpha}])$. 

\begin{cor} Let $f$, $\psi$, and $W(\psi)$  be as above and $\phi_{\alpha}$ a closed $1$ form on $C$ which  is cohomologous in $H^1(C)$ to $i_*([\eta_{\alpha}])$ for some $\alpha$ in $V$. Then
$$ \int_{\alpha} \frac{df}{f} \psi + W(\psi) =\int_{C \setminus  \gamma} \phi_{\alpha} \wedge \Theta + 2\pi i  \int_{\gamma} \phi_{\alpha} \psi \,  \left(\mod\, 2\pi i \int_{\alpha} \psi \ZZ\right)$$
\label{nocompact}
\end{cor}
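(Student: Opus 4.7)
The plan is to reduce the statement to Proposition \ref{colomboprop} by showing that the difference between the two candidate formulas vanishes. Since $\phi_\alpha$ and (the extension by zero of) $\eta_\alpha$ are both smooth closed $1$-forms on $C$ representing the same class $i_*([\eta_\alpha]) \in H^1(C)$, the de~Rham theorem gives a smooth function $h$ on $C$ with
$$\phi_\alpha - \eta_\alpha = dh.$$
Subtracting the identity of Proposition \ref{colomboprop} from the desired identity, it suffices to verify
$$\int_{C\setminus\gamma} dh \wedge \Theta \;+\; 2\pi i \int_\gamma dh \cdot \psi \;\equiv\; 0 \pmod{2\pi i \int_\alpha \psi \, \ZZ}.$$

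For the iterated--integral term, apply Lemma \ref{basicproperties}(3): for each component $\gamma^i$ of $\gamma$ starting at $Q$, one has $\int_{\gamma^i} dh\,\psi = \int_{\gamma^i} h\psi - h(Q)\int_{\gamma^i}\psi$. Summing over components and using Lemma \ref{usefullemma} ($\int_\gamma \psi = 0$) yields
$$\int_\gamma dh \, \psi \;=\; \int_\gamma h\psi.$$

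For the $2$-form integral, let $U_\epsilon = C \setminus \bigl(n_\epsilon(\gamma) \cup B_\epsilon(Q) \cup B_\epsilon(R)\bigr)$, where $B_\epsilon$ denotes a small disc. On $U_\epsilon$, the form $\Theta = \log(f)\psi + W(\psi)$ is smooth and closed (since $d\Theta = \tfrac{df}{f}\wedge\psi + dW(\psi) = 0$ on $C_{QR}\setminus\gamma$), so $d(h\Theta) = dh\wedge\Theta$ and Stokes gives
$$\int_{U_\epsilon} dh \wedge \Theta \;=\; \int_{\partial U_\epsilon} h\Theta.$$
The boundary $\partial U_\epsilon$ decomposes as (i) two copies $\gamma_1, \gamma_2$ of $\gamma$ with opposite orientation (truncated near $Q, R$), (ii) end caps $(-\epsilon,\epsilon)\times\{Q\}$ and $(-\epsilon,\epsilon)\times\{R\}$, (iii) small circles $\partial B_\epsilon(Q)$, $\partial B_\epsilon(R)$. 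On (iii), $\log(f)$ has only a $\log|\cdot|$ singularity and $W(\psi)$ has at worst the same type of singularity (as $dW(\psi) = \psi\wedge\tfrac{df}{f}$), while $h$ is bounded; hence the circle integrals are $O(\epsilon\log\epsilon) \to 0$. The end caps (ii) have length $O(\epsilon)$ and similarly contribute $O(\epsilon\log\epsilon)\to 0$. On (i), the $W(\psi)$ part is continuous across $\gamma$ so contributes $0$ to the difference $\int_{\gamma_1} - \int_{\gamma_2}$, while $\log(f)$ jumps by $2\pi i$, yielding in the limit
$$\int_{C\setminus\gamma} dh \wedge \Theta \;=\; -2\pi i \int_\gamma h\psi.$$
Combining with the iterated--integral computation gives the required cancellation.

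The main obstacle is the last step: ensuring rigorously that the contributions from the boundary circles around $Q, R$ and from the two end caps vanish in the limit $\epsilon \to 0$. This reduces to the observation that $W(\psi)$, while not globally smooth at $Q, R$, inherits only a logarithmic singularity from the equation $dW(\psi) = \psi \wedge \tfrac{df}{f}$, so the integrand $h\Theta$ is of order $\log|z|$ near each of $Q, R$, which is integrable against arclength $O(\epsilon)$. The indeterminacy $2\pi i \int_\alpha \psi \, \ZZ$ arises, as in Proposition \ref{colomboprop}, from the implicit choice of branch for $\log(f)$ along the cover $q:\tilde C \to C_{QR}$, and is preserved throughout.
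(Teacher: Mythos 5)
Your proposal is correct and follows essentially the same route as the paper: write $\phi_\alpha-\eta_\alpha=dh$, apply Stokes to $d(h\Theta)=dh\wedge\Theta$ on the complement of a tubular neighbourhood of $\gamma$, use the $2\pi i$ jump of $\log(f)$ across $\gamma$ together with Lemma \ref{usefullemma} and Lemma \ref{basicproperties}(3) to identify the result with $-2\pi i\int_\gamma dh\,\psi$, and cancel against the corresponding term from Proposition \ref{colomboprop}. Your treatment is in fact slightly more careful than the paper's, since you explicitly excise discs around $Q$ and $R$ and estimate the $O(\epsilon\log\epsilon)$ boundary contributions there, a point the paper passes over silently.
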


\begin{proof} Let $\phi_{\alpha}$ denote such a form. Both $\phi_{\alpha}$ and $\eta_{\alpha}$ are Poincar\'e duals of the same homology class, so $\phi_{\alpha}-\eta_{\alpha}=dg$. Let $\ngam$ be as before. One has 
$$\int_{C\setminus \ngam} \phi_{\alpha} \wedge \Theta  = \int_{C \setminus \ngam} dg \wedge \Theta + \int_{C \setminus \ngam} \eta_{\alpha} \wedge \Theta$$
So it suffices to compute the two terms separately. 

Any closed form $\phi$ on $C$ is compactly supported on the manifold with boundary $C \setminus \ngam$ as $C \setminus \ngam$ is a closed subset of $C$ and $C$ is compact.  Further, by choice of $V$ it will be cohomologous to $[i_*(\eta_{\alpha})]$ for some $\alpha \in V$.  In particular we know that $[c(i)\frac{dx_{\sigma(i)}}{N}]$ is cohomologous to a Poincar\'e dual of $[\alpha_i]$.

Since both $\phi_{\alpha}$ and $\eta_{\alpha}$ are  compactly supported on $C \setminus \ngam$, so is   $dg$ and hence $g$ and  $dg \wedge \Theta$ are compactly supported as well. From Stokes' Theorem we get 
 $$\int_{C \setminus \ngam} dg \wedge \Theta = \int_{C \setminus \ngam}d(g\Theta)= \int_{\partial (C \setminus \ngam)} g\Theta$$
 If we choose a different function  $h$ such that $dh=dg$, then $h-g=c$ for some constant $c$. Hence, from Lemma \ref{usefullemma} we see
 $$ \int_{\partial (C \setminus \ngam)} g\Theta-  \int_{\partial (C \setminus \ngam)} h\Theta=  c \int_{\partial (C \setminus \ngam)} \Theta = \int_{C\setminus \ngam} d\Theta=0$$  
  so it does not depend on the choice of primitive. 
  
 An argument similar to Lemma \ref{usefullemma} with $g\Theta$ in the place of $\Theta$  shows that 
 $$\int_{\partial (C \setminus \ngam)} g\Theta = -2\pi i \int_{\gamma} g\psi$$
 Lemma \ref{usefullemma} along with Lemma \ref{basicproperties} (3) shows further that 
  $$ \int_{C \setminus \ngam} dg \wedge \Theta = \int_{\partial (C \setminus \ngam)} g\Theta = -2\pi i \int_{\gamma}  g\psi= -2\pi i \int_{\gamma} dg \psi.$$
 Taking the limit as $\epsilon \rightarrow 0$ one gets
$$ \int_{C \setminus \gamma} dg \wedge \Theta = -2\pi i \int_{\gamma} dg \psi.$$ 
From Proposition \ref{colomboprop} we have 
$$ \int_{\alpha} \frac{df}{f} \psi + W(\psi) =\int_{C \setminus \gamma} \eta_{\alpha} \wedge \Theta + 2\pi i  \int_{\gamma} \eta_{\alpha} \psi \,  \left(\mod\, 2\pi i \int_{\alpha} \psi \ZZ\right)$$
Since the integral of $dg \wedge \Theta$ cancels the iterated integral term, we have 

$$ \int_{C \setminus \gamma} \phi_{\alpha} \wedge \Theta + 2\pi i  \int_{\gamma} \phi_{\alpha} \psi =\int_{C\setminus \gamma} \left(\eta_{\alpha} \wedge \Theta + dg \wedge \Theta\right) + 2 \pi i \int_{\gamma} \left( \eta_{\alpha} \psi + dg \psi\right) $$
$$= \int_{C \setminus \gamma} \eta_{\alpha} \wedge \Theta + 2 \pi i  \int_{\gamma} \eta_{\alpha} \psi = \int_{\alpha} \frac{df}{f} \psi + W(\psi)   \left(\mod\, 2\pi i \int_{\alpha} \psi \ZZ\right)$$

\end{proof}

\begin{rem} We will apply this to compute the Carlson representative of an extension class. This is an element of the intermediate Jacobian associated to the extension, hence the two expressions, while they possibly differ by and element of $(2 \pi i \int_{\alpha} \psi )\ZZ $, will have the same class in   $\Hom_{\ZZ} (\ZZ(-2),\otimes^2 H^1(C))$ keeping in mind isomorphism  of $H_1(C)$ with $q_*(H_1(\tilde{C})) \subset H_1(C_{QR})$. We will use $\equiv$ to keep in mind the fact that all equalities hold only in the intermediate Jacobian. 
\end{rem}

We now apply this in the case of interest to us.
\begin{cor}                                                                                                                                                                                                                                                                                                                                                                                                                                                                                                                                                                                                                                                                                                                                                                                                                                                                                                                                                                                                                                                                                                                                                                                                                                                                                                                                                                                                                                                                                                                                                                                                                                                                                                                                                                                                                                                                                                                                                                                                                                                                                                                                                                                                                                                                                                                                                                                                                                                                                                                                                                                                                                                                                                                                                                                                                                                                                                                                                                                                                                                                                                                                                                                                                                                                                                                                                                                                                                                                                                                                                                                                                                                                                                                                                                                                                                                                                                                                                                                                                                                                                                                                                                                                                                                     
Let  $\alpha_j$ be as above. Then                                                                                                                                                                                                                                                                                                                                                                                                                                                                                                                                                                                                                                                                                                                                                                                                                                                                                                                                                                                                                                                                                                                                                                                                                                                                                                                                                                                                                                                                                                                                                                                                                                                                                                                                                                                                                                                                                                                                                                                                                                                                                                                                                                                                                                                                                                                                                                                                                                                                                                                                                                                                                                                                                                                                                                                                                                                                                                                                                                                                                                                                                                                                                                                                                                                                                                                                                     \begin{align*}
{\bf F}_{QR,P}(\Omega)(\alpha_j \otimes \alpha_k)  \equiv &N(2g_C+1)c(j)c(\sigma(k)) \left(\int_{C\setminus \gamma}  dx_{\sigma(j)} \wedge \left( \log(f)dx_{\sigma(k)}-\frac{N}{(2g_C+1)}W(dx_{\sigma(k)})\right) \right. \\
 &+ \left. 2\pi i \int_{\gamma}  dx_{\sigma(j)} dx_{\sigma(k)} \right) 
\end{align*}
 
\end{cor}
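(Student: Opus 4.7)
The plan is to apply Corollary \ref{nocompact} directly to the expression for ${\bf F}_{QR,P}(\Omega)(\alpha_j \otimes \alpha_k)$ established in Lemma \ref{fqrplemma}. First I would rewrite
$$
{\bf F}_{QR,P}(\Omega)(\alpha_j \otimes \alpha_k) = N(2g_C+1)\, c(\sigma(k)) \left( \int_{\alpha_j} \frac{df}{f}\, dx_{\sigma(k)} - \frac{N}{2g_C+1} \int_{\alpha_j} W(dx_{\sigma(k)}) \right)
$$
by pulling the factor $(2g_C+1)$ out of the parenthesis, so that the inner expression matches the left hand side of Proposition \ref{colomboprop} with $\psi = dx_{\sigma(k)}$. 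The ordinary $1$-form $W'(\psi) := -\tfrac{N}{2g_C+1}\, W(dx_{\sigma(k)})$ (up to sign conventions) is a primitive of $\psi \wedge \frac{df}{f}$, since Theorem \ref{imthm} gives $dW(dx_k) = \tfrac{2g_C+1}{N}\, dx_k \wedge \tfrac{df}{f}$. Equivalently, $\Theta = \log(f)\, dx_{\sigma(k)} - \tfrac{N}{2g_C+1}\, W(dx_{\sigma(k)})$ is closed on $C_{QR}\setminus \gamma$, which is the hypothesis needed to invoke the proposition.

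Next I would identify the closed $1$-form on $C$ playing the role of $\phi_{\alpha_j}$ in Corollary \ref{nocompact}. Writing $\alpha_j = {\alpha'_j}^N \beta_Q^{-m_j}$, the compactly supported Poincaré dual constructed in the proof of Proposition \ref{colomboprop} is $\eta_{\alpha_j} = N \eta_{\alpha'_j} - m_j \eta_{\beta_Q}$ in $H^1_c(C_{QR})$. Pushing forward to $H^1(C)$ the $\beta_Q$ contribution vanishes because $i_*[\beta_Q] = 0$, and since the Poincaré dual of $[i_*(\alpha'_j)]$ in $H^1(C)$ is $c(j)\, dx_{\sigma(j)}$, one obtains $i_*([\eta_{\alpha_j}]) = N c(j)\, dx_{\sigma(j)}$. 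Thus I take $\phi_{\alpha_j} = N c(j)\, dx_{\sigma(j)}$ (which will conveniently match the factor structure of the target formula).

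Applying Corollary \ref{nocompact} with these choices converts the mixed integral
$$
\int_{\alpha_j} \tfrac{df}{f}\, dx_{\sigma(k)} + W'(dx_{\sigma(k)})
$$
into $c(j)\!\int_{C\setminus\gamma} dx_{\sigma(j)} \wedge \Theta \;+\; 2\pi i\, c(j)\!\int_\gamma dx_{\sigma(j)}\, dx_{\sigma(k)}$, modulo $2\pi i\, \ZZ\!\int_{\alpha_j} dx_{\sigma(k)}$, which is precisely the ambiguity in the intermediate Jacobian allowed by the notation $\equiv$. Substituting this back into the rewritten formula, the prefactor $N(2g_C+1)\,c(\sigma(k))$ combines with $c(j)$ (the sign $c(j)^2 = 1$ already absorbed in the Poincaré pairing) to give the constant $N(2g_C+1)\,c(j)\,c(\sigma(k))$ in front, yielding the stated expression.

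The main obstacle is careful sign and normalization bookkeeping. Three conventions must be reconciled: the condition $dW(\psi) = \psi \wedge \frac{df}{f}$ required by Proposition \ref{colomboprop}; the specific normalization $dW(dx_k) = \tfrac{2g_C+1}{N}\, dx_k \wedge \tfrac{df}{f}$ from Theorem \ref{imthm}; and the orientation conventions for the tubular neighborhoods entering the definition of $\eta_{\alpha_j}$ and for the jump of $\log(f)$ across the two copies of $\gamma$. The safest way to pin all of this down is to verify at the end that the proposed $\Theta = \log(f)\,dx_{\sigma(k)} - \tfrac{N}{2g_C+1}\,W(dx_{\sigma(k)})$ is indeed closed on $C_{QR}\setminus\gamma$, which forces the correct relative sign of $W(dx_{\sigma(k)})$ and hence the correct overall sign in the final identity.
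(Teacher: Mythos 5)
Your proposal is correct and is essentially the paper's own proof, which consists of the single sentence that the corollary is a straightforward application of Corollary \ref{nocompact} to the expression in Lemma \ref{fqrplemma}; you have simply made explicit the substitution $\psi = dx_{\sigma(k)}$, the closedness check for $\Theta$ via $dW(dx_k)=\tfrac{2g_C+1}{N}dx_k\wedge\tfrac{df}{f}$, and the identification of the form $\phi_{\alpha_j}$ cohomologous to $i_*([\eta_{\alpha_j}])$. The only point worth pinning down is the normalization of that Poincar\'e dual: your choice $\phi_{\alpha_j}=Nc(j)\,dx_{\sigma(j)}$ yields an overall prefactor $N^2(2g_C+1)c(j)c(\sigma(k))$ rather than the stated $N(2g_C+1)c(j)c(\sigma(k))$, whereas the paper's own (differently normalized) assertion that $c(j)\,dx_{\sigma(j)}/N$ represents a Poincar\'e dual of $[\alpha_j]$ would remove the factor of $N$ entirely, so the constant in the displayed formula should be verified independently of both conventions.
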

\begin{proof} This is a straightforward application of Corollary  \ref{nocompact} to the expression in Lemma \ref{fqrplemma}.

\end{proof}  
${\bf F}_{QR,P}(\Omega)$ determines an element of the intermediate Jacobian of $(\otimes^2 V_{\C})^*$
$$J(\otimes^2 V_{\C}^*) \simeq \frac{F^1(\otimes^2 V_{\C}^*)}{(\otimes^2 V)^*}$$
so to determine ${\bf F}_{QR,P}(\Omega)$ it suffices to evaluate it on elements of $F^1(\otimes^2 V_{\C})^*$.  From the above Corollary, we can compute it on $F^1(\otimes^2 H^1(C))$.

Let $\{dz_i\}$ be a basis of the holomorphic $1$-forms on $C$  such that 
$$\int_{i_*(\alpha_i)} dz_j=N\delta_{ij} \hspace{1in} 1\leq i\leq g$$ 
 where $\{\alpha_i\}$ is the basis $V$.
 $$dz_j=dx_{j}+ \sum_{i=1}^{g} A_{ji} dx_{i+g} \hspace{1in} \text { where } A_{ji}=\frac{1}{N} \int_{i_*(\alpha_{i+g})} dz_j$$ 
 coming from the fact that $c(j)\frac{dx_{\sigma(j)}}{N}$ is dual to $\alpha_j$. This expression continues to hold when we consider them as forms in $C_{QR}$.
 
Let $\zeta_{j}=c(\sigma(j)) \alpha_{\sigma(j)}+\sum_{1\leq i \leq g} A_{ji} c(i) \alpha_{i},$ where $j\leq g$. Then $dz_j$ is cohomologous in $H^1(C_{QR})$ to a Poincar\'e dual of $\zeta_j$. We  have the following proposition
\begin{prop}\label{colombolemma}The map ${\bf F}_{QR,P}(\Omega)$ evaluated on elements of the form $\zeta_i \otimes \alpha_j$ is 
 $${\bf F}_{QR,P}(\Omega)(\zeta_i \otimes c(\sigma(j))\alpha_{\sigma(j)}) \equiv (2g_C+1)N \left(\int_{C \setminus \gamma}  \log(f) dz_i \wedge  dx_j  + 2\pi i \int_{\gamma}  dz_i dx_j \right) $$
In other words 
$$dz_i \wedge W(dx_j)=0.$$
\end{prop}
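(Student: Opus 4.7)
The plan is to expand the left hand side by linearity using the previous corollary, and then to reduce everything to the single vanishing identity $dz_i\wedge W(dx_j)=0$.

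Write $\zeta_i=\sum_m b_m\,\alpha_m$ with $b_{\sigma(i)}=c(\sigma(i))$ and $b_k=A_{ik}c(k)$ for $1\le k\le g_C$. Applying the previous corollary at $\alpha_m\otimes\alpha_{\sigma(j)}$ (so the corollary's $j$ is $m$ and its $k$ is $\sigma(j)$) and multiplying by $c(\sigma(j))b_m$, the resulting terms depend on $m$ only through the factor $b_m c(m)\,dx_{\sigma(m)}$ inside the integrands. Using $\sigma\circ\sigma=\mathrm{id}$ and the definition $dz_i=dx_i+\sum_{k=1}^{g_C}A_{ik}\,dx_{k+g_C}$, one finds
\[
\sum_m b_m\,c(m)\,dx_{\sigma(m)}\;=\;dx_i+\sum_{k=1}^{g_C} A_{ik}\,dx_{k+g_C}\;=\;dz_i.
\]
Pulling this combination out of the integrals therefore reduces ${\bf F}_{QR,P}(\Omega)(\zeta_i\otimes c(\sigma(j))\alpha_{\sigma(j)})$ to
\[
(2g_C+1)N\left(\int_{C\setminus\gamma}\log(f)\,dz_i\wedge dx_j+2\pi i\int_{\gamma}dz_i\,dx_j\right)\;-\;N^2\int_{C\setminus\gamma}dz_i\wedge W(dx_j),
\]
so the proposition is equivalent to the identity $\int_{C\setminus\gamma}dz_i\wedge W(dx_j)=0$, and more strongly to the pointwise vanishing $dz_i\wedge W(dx_j)=0$ stated at the end.

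The pointwise vanishing is a Hodge-type assertion. By construction the section $s_F$ preserves the Hodge filtration on the (integrals over paths in the) bar complex, so the primitives $\mu_{ij,\bullet}$ and $\mu_{ijk,\bullet}$ realising $s_F$ may be chosen of pure type $(1,0)$ on the open Riemann surface $C_{\bullet}=C\setminus\{\bullet\}$. This choice is possible because $C_{\bullet}$ is Stein, so $H^1(C_{\bullet},\mathcal{O})=0$ and every smooth $(1,1)$-form on $C_{\bullet}$ is $\bar\partial$-exact; hence the equations $d\mu_{ij,\bullet}=-dx_i\wedge dx_j$ and $d\mu_{ijk,\bullet}=-dx_i\wedge\mu_{jk,\bullet}-\mu_{ij,\bullet}\wedge dx_k$ admit $(1,0)$ primitives with the prescribed residues and antisymmetry (any needed adjustments are by holomorphic $1$-forms, which do not change the Hodge type). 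With such a choice each $\mu_{j,i,\sigma(i),\bullet}$ is of type $(1,0)$, so $W(dx_j)$ is a $(1,0)$-form on $C_{QR}$. Since $dz_i$ is also of type $(1,0)$ on the Riemann surface $C_{QR}$, the wedge $dz_i\wedge W(dx_j)$ is a $(2,0)$-form on a one-complex-dimensional manifold and therefore vanishes identically.

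The main obstacle is precisely this Hodge-theoretic input: exhibiting $s_F$ by primitives $\mu_{ij,\bullet},\mu_{ijk,\bullet}$ of pure type $(1,0)$ that simultaneously satisfy antisymmetry, smoothness off the logarithmic locus, and the residue normalisations listed above Theorem~\ref{imthm}. This is implicit in Hain's construction of the mixed Hodge structure on $J_{\bullet,P}/J_{\bullet,P}^4$ via Chen's reduced bar complex, where the $F^p$-piece is computed from iterated integrals of forms of appropriate Hodge type; once this is in hand the rest of the argument is purely formal.
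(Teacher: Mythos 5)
Your proof takes essentially the same route as the paper: the paper's entire argument (a citation of Colombo, Proposition 3.4) is that $W(dx_j)$ and $dz_i$ are both of type $(1,0)$, so their wedge vanishes identically on a Riemann surface. Your extra work --- the linear expansion of $\zeta_i$ reducing the claim to $dz_i\wedge W(dx_j)=0$, and the Stein-manifold justification that the primitives $\mu_{ij,\bullet},\mu_{ijk,\bullet}$ may be chosen of pure type $(1,0)$ --- just fills in detail that the paper delegates to the reference.
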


\begin{proof} This is Colombo, \cite{colo}. Proposition 3.4. The point is that $W(dx_j)$ and $dz_i$ are both of type $(1,0)$ hence $dz_i \wedge W(dx_j)=0$ 

\end{proof}

In fact, the theorem holds for the other term as well. 

\begin{prop} For a suitable choice of $\mu_{ijk,Q}$ and $\mu_{ijk,R}$ one has 
\begin{align*}W(dz_{i}):=&W(dx_{i})+\sum_{k}A_{ki}W(dx_{i+g})=0 
\end{align*}
\label{wdz}
\end{prop}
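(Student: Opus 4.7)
The plan is to first show that $W(dz_i)$ is automatically closed, and then to exploit the freedom of choice in the primitive $\mu_{ijk,\bullet}$ to arrange $W(dz_i)=0$.

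For closedness, I would invoke Theorem \ref{imthm} together with linearity of $W$ to obtain
$$dW(dz_i) \;=\; \tfrac{2g_C+1}{N}\, dz_i \wedge \tfrac{df}{f}.$$
On the Riemann surface $C_{QR}$, both $dz_i$ (which extends holomorphically from $C$) and $\tfrac{df}{f}$ are of type $(1,0)$; since there are no nonzero $(2,0)$-forms on a complex curve, $dz_i\wedge \tfrac{df}{f}=0$, so $W(dz_i)$ is closed. Following the argument of Proposition \ref{colombolemma}, we may take each $\mu_{ijk,\bullet}$ to be of type $(1,0)$, so that $W(dz_i)$ is a closed $(1,0)$-form on $C_{QR}$, i.e.\ a holomorphic differential on $C_{QR}$ extending to $C$ with at worst simple poles at $Q$ and $R$.

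For the vanishing, the key observation is that condition (iii) of page \pageref{it123} determines $\mu_{ijk,\bullet}$ only up to addition of a closed 1-form. More precisely, replacing each $\mu_{i\,j\,\sigma(j),\bullet}$ by $\mu_{i\,j\,\sigma(j),\bullet}+\omega_{ij\sigma(j),\bullet}$ with $\omega_{ij\sigma(j),\bullet}$ any closed $(1,0)$-form on $C_{\bullet}$ preserves all the required identities and shifts
$$W(dx_i)\;\longmapsto\;W(dx_i)+\sum_{j=1}^{2g_C}c(j)\bigl(\omega_{ij\sigma(j),Q}-\omega_{ij\sigma(j),R}\bigr).$$
Since the triples indexing the $\mu$'s entering different $W(dx_i)$ are disjoint, the closed-form corrections added to the various $W(dx_i)$ are independent. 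For each $i\le g_C$, the shift of $W(dx_i)$ can be chosen equal to the closed holomorphic form $-W(dz_i)$ (for instance by absorbing all the correction into $\omega_{i\,1\,\sigma(1),Q}$, rescaled by $c(1)$, which is an admissible closed $(1,0)$-form on $C_Q$ since it extends to $C$ with simple poles at $Q,R$). With this modified choice, linearity of $W$ yields
$$W(dz_i)\;=\;W(dx_i)+\sum_k A_{ik}\,W(dx_{k+g})\;=\;0.$$

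The only substantive step is the closedness of $W(dz_i)$, which reduces to the Riemann-surface fact $(1,0)\wedge (1,0)=0$; the remainder is a formal book-keeping argument using the ambiguity in the primitive. The one point requiring care is to check that the proposed correction $-W(dz_i)$ is itself a valid closed $(1,0)$-form on $C_{\bullet}$ of the logarithmic type that the $\mu_{ijk,\bullet}$ are allowed to have, which follows from the structural description of $W(dz_i)$ obtained in the first paragraph.
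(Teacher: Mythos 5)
The overall strategy is the right one, and for what it is worth the paper itself offers no argument at all here (it simply cites Colombo's Lemma~3.1), so you are supplying content the paper omits. Your first paragraph is correct: by Theorem~\ref{imthm} and linearity, $dW(dz_i)=\frac{2g_C+1}{N}\,dz_i\wedge\frac{df}{f}=0$ because both factors are of type $(1,0)$ on a curve, so $W(dz_i)$ is a closed $(1,0)$-form on $C_{QR}$ with at worst logarithmic singularities at $Q$ and $R$. The reduction to ``use the gauge freedom in the primitives $\mu_{ijk,\bullet}$'' is also the right idea.

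The gap is in the absorption step. You propose to add the correction $-c(1)W(dz_i)$ to $\mu_{i\,1\,\sigma(1),Q}$ and justify its admissibility by saying it ``extends to $C$ with simple poles at $Q,R$.'' But $\mu_{ijk,Q}$ must be a smooth logarithmic form on $C_Q=C\setminus\{Q\}$, in particular smooth at $R$; a correction with a pole at $R$ is therefore not allowed. Worse, the available corrections are genuinely too small to kill an arbitrary such $W(dz_i)$: any \emph{closed} $1$-form on $C_Q$ has vanishing residue at $Q$, because $\beta_Q$ is a product of commutators and hence trivial in $H_1(C_Q)$ (equivalently, a closed $(1,0)$-form on $C_Q$ with at most a log pole at $Q$ is a meromorphic form on $C$ with a single simple pole, which is forbidden by the residue theorem, so it is in fact a global holomorphic form on $C$). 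The same holds on $C_R$. Consequently every modification of the $\mu_{ijk,Q}$ and $\mu_{ijk,R}$ by closed forms changes $W(dx_k)$, and hence $W(dz_i)$, only by forms with zero residues at $Q$ and $R$. Writing $W(dz_i)=\lambda\,\frac{1}{N}\frac{df}{f}+h$ with $h$ holomorphic on $C$, the coefficient $\lambda=\mathrm{Res}_Q\,W(dz_i)$ is therefore an invariant of the construction, untouched by the gauge freedom you invoke, and the proposition is true only if $\lambda=0$. Proving that this residue vanishes is the substantive content of the lemma (and of Colombo's Lemma~3.1), and your proposal does not address it; once $\lambda=0$ is known, $W(dz_i)$ is a holomorphic form on all of $C$ and your absorption argument goes through verbatim, since such a form is an admissible closed $(1,0)$ correction on $C_Q$.
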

\begin{proof} \cite{colo} Lemma $3.1$.
\end{proof}
Hence we have 
\begin{prop}\label{wd1} 
\label{integralformula}
$${\bf F}_{QR,P}(\Omega)(c(\sigma(j))\alpha_{\sigma(j)} \otimes \zeta_i) \equiv (2g_C+1) N \left(\int_{C \setminus \gamma}  \log(f) dx_j \wedge  dz_i + 2\pi i \int_{\gamma}  dx_j dz_i \right).$$
\end{prop}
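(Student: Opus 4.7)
The plan is to mirror the argument of Proposition \ref{colombolemma} with the roles of the two tensor factors swapped. In that proposition, one starts by evaluating the Carlson representative $\mathbf{F}_{QR,P}(\Omega)$ on $\zeta_i \otimes c(\sigma(j))\alpha_{\sigma(j)}$ via bilinearity; here I would instead expand $c(\sigma(j))\alpha_{\sigma(j)}\otimes \zeta_i$, using
$$\zeta_i = c(\sigma(i))\alpha_{\sigma(i)} + \sum_{k=1}^{g_C} A_{ik}\, c(k)\, \alpha_k,$$
to obtain a sum of terms of the form $c(\sigma(j))c(\sigma(i))\, \alpha_{\sigma(j)}\otimes\alpha_{\sigma(i)}$ and $c(\sigma(j))A_{ik}c(k)\,\alpha_{\sigma(j)}\otimes\alpha_k$.

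Second, I would apply the Corollary immediately preceding Proposition \ref{colombolemma}, which computes $\mathbf{F}_{QR,P}(\Omega)(\alpha_j\otimes\alpha_k)$ in terms of $dx_{\sigma(j)}\wedge(\log(f)dx_{\sigma(k)} - \tfrac{N}{2g_C+1} W(dx_{\sigma(k)}))$ together with the boundary term $2\pi i \int_\gamma dx_{\sigma(j)}dx_{\sigma(k)}$. Substituting $\alpha_{\sigma(j)}\otimes\alpha_{\sigma(i)}$ (and using $\sigma^2 = \mathrm{id}$) produces an integrand built from $dx_j$ and $dx_i$, while substituting $\alpha_{\sigma(j)}\otimes\alpha_k$ produces one built from $dx_j$ and $dx_{\sigma(k)}$. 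The sign factors $c(\sigma(l))c(l) = -1$ that appear from the interaction of the outer coefficients with the Corollary's $c(j)c(\sigma(k))$ prefactor are identical to those tracked in the proof of Proposition \ref{colombolemma}, and after combining them with the overall $c(\sigma(j))c(\sigma(i))$ and $c(\sigma(j))A_{ik}c(k)$ weights one gets a uniform overall constant $N(2g_C+1)$ (with the appropriate sign) multiplying each summand.

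Third, I would observe that the coefficients of $dx_i$ and $dx_{\sigma(k)}$ inside the bracket are precisely those defining
$$dz_i = dx_i + \sum_{k=1}^{g_C} A_{ik}\, dx_{\sigma(k)},$$
so the terms reassemble into
$$N(2g_C+1)\left(\int_{C\setminus\gamma} dx_j \wedge \bigl(\log(f)\,dz_i - \tfrac{N}{2g_C+1} W(dz_i)\bigr) + 2\pi i \int_\gamma dx_j\, dz_i\right).$$
At this point Proposition \ref{wdz} enters: since $W(dz_i)=0$, the middle term vanishes identically, and the remaining expression is exactly the claim of Proposition \ref{wd1}.

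The arguments are essentially routine once the pattern of Proposition \ref{colombolemma} is in hand; the main technical obstacle is bookkeeping of the sign conventions arising from the two appearances of $c(\sigma(\cdot))c(\cdot)$ (from the outer expansion of $\zeta_i$ and from the Corollary's $c(j)c(\sigma(k))$) together with the orientation conventions for the Poincaré duals fixed earlier in the paper. Because Proposition \ref{wdz} only yields vanishing after the full combination $dx_i + \sum A_{ik} dx_{\sigma(k)}$ is assembled, one must be careful not to discard the $W$--terms prematurely, but instead carry them along until the reassembly step makes the cancellation manifest.
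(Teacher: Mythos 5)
Your proposal is correct and follows the same route the paper (implicitly) takes: expand $\zeta_i$ in the second tensor slot by linearity, apply the general formula from the corollary to Proposition \ref{colomboprop}/\ref{nocompact} term by term, reassemble the $dx$'s into $dz_i$ and the $W(dx_{\cdot})$'s into $W(dz_i)$, and invoke Proposition \ref{wdz} to kill the latter. The paper states no explicit proof ("Hence we have") precisely because this is the intended computation, and your observation that the $W$-terms must be carried until they assemble into $W(dz_i)$ is exactly the point.
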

Comparing this with the regulator term in Theorem \ref{regform}  we get 
\begin{thm} Let $Z_{QR}$ be the motivic cohomology cycle constructed above  and $\epsilon_{QR,P}^4$ the extension in $\Ext_{MHS}(\ZZ(-2),\wedge^2 H^1(C))$. We use $\epsilon^4_{QR,P}$ to denote its Carlson representative as well.  Then one has 
$$ \epsilon^4_{QR,P}(\omega) \equiv (2g_C+1)N \reg_{\ZZ}(Z_{QR})(\omega)$$
where $\omega \in F^1\wedge^2 H^1(C)$. 
\label{mainthm}
\end{thm}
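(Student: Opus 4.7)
The plan is to reduce the theorem to a basis computation on $F^1\wedge^2 H^1(C)$ and then match the resulting expression with the regulator formula from Theorem \ref{regform}. Every element of $F^1\wedge^2 H^1(C)$ is a linear combination of wedges of the form $dx_j\wedge dz_i$ with $dz_i$ holomorphic, so by bilinearity it suffices to establish the identity on such basis elements.

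First, I would reconcile the source spaces: $\epsilon^4_{QR,P}$ lives in $\Ext_{MHS}(\ZZ(-2),\wedge^2 H^1(C))$, while the Carlson representative ${\bf F}_{QR,P}(\Omega)$ was computed as a functional on $\otimes^2 H^1(C)$ in the previous section. Evaluating on $dx_j\wedge dz_i$ should correspond to taking the antisymmetric combination
\[
{\bf F}_{QR,P}(\Omega)(\alpha_{\sigma(j)}\otimes\zeta_i) - {\bf F}_{QR,P}(\Omega)(\zeta_i\otimes\alpha_{\sigma(j)}),
\]
where, using the notation of the previous subsection, $\zeta_i$ is Poincar\'e-dual to $dz_i$ and $c(\sigma(j))\alpha_{\sigma(j)}$ is Poincar\'e-dual to $dx_j$ in the splitting $V\subset H_1(C_{QR})$ provided by Lemma \ref{commutator}.

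Second, I would apply Proposition \ref{wd1} to the first summand and Proposition \ref{colombolemma} to the second. Each evaluates to a log-integral over $C\setminus\gamma$ plus a length-two iterated integral over $\gamma$, with the auxiliary terms involving $W(dz_i)$ vanishing by Proposition \ref{wdz}. Using $dz_i\wedge dx_j = -dx_j\wedge dz_i$, the two log-integral contributions add to $(2g_C+1)N\cdot 2\int_{C\setminus\gamma}\log(f)\,dx_j\wedge dz_i$, while the iterated-integral contributions combine into $(2g_C+1)N\cdot 2\pi i\int_\gamma (dx_j\,dz_i - dz_i\,dx_j)$. Comparing with the formula of Theorem \ref{regform} applied to $\phi=dx_j$, $\psi=dz_i$, one sees that the antisymmetrized Carlson representative equals $(2g_C+1)N\cdot\reg_{\ZZ}(Z_{QR,P})(dx_j\wedge dz_i)$ on the nose. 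By bilinearity this extends to all of $F^1\wedge^2 H^1(C)$.

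The main obstacle, I expect, is twofold. On the one hand, one has to justify that taking this antisymmetric combination really is the correct procedure for passing from the Carlson representative on $\otimes^2 H^1(C)$ to one on $\wedge^2 H^1(C)$, and that the passage is compatible with the various pull-backs and push-forwards in equations \eqref{fref} and \eqref{fref1}. On the other hand, one has to carefully track the normalization factors (the signs $c(i)$, the factor $N$ coming from $\int_{[\alpha_k]}dx_\ell=N\delta_{k\ell}$, and the factor of $2$ coming from the two pieces $C^Q$ and $_RC$ of the cycle $Z_{QR,P}$). The identities are only meaningful modulo the period lattice $2\pi i\int_\alpha\psi\,\ZZ$ from Proposition \ref{colomboprop} and Corollary \ref{nocompact}, which is why the statement uses $\equiv$ rather than $=$; this ambiguity disappears in the intermediate Jacobian, which is where both sides naturally live.
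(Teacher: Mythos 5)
Your proposal is correct and follows essentially the same route as the paper: reduce to the basis elements $dx_j\wedge dz_i$, evaluate the Carlson representative on the antisymmetrized tensor $c(\sigma(j))\alpha_{\sigma(j)}\otimes\zeta_i-\zeta_i\otimes c(\sigma(j))\alpha_{\sigma(j)}$ via Propositions \ref{colombolemma} and \ref{wd1} (with $W(dz_i)=0$ from Proposition \ref{wdz}), and match the resulting $2\int_{C\setminus\gamma}\log(f)\,dx_j\wedge dz_i+2\pi i\int_\gamma(dx_j\,dz_i-dz_i\,dx_j)$ against Theorem \ref{regform}. The caveats you flag (the $c(i)$ and $N$ normalizations and the period-lattice ambiguity behind $\equiv$) are exactly the points the paper absorbs into its notation, so nothing further is needed.
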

\begin{proof} It suffices to check this on $dz_i \wedge dx_j=dz_i \otimes dx_j-dx_j \otimes dz_i$. The result then follows by comparing 
the formula for the  Carlson representative ${\bf F}_{QR,P}$ in Lemma \ref{fqrplemma} with the expression for the regulator in Theorem  
\ref{regform} using Proposition \ref{colombolemma}

From Theorem \ref{integralformula}  and Lemma \ref{fqrplemma} we have
\begin{align*}
{\bf F}_{QR,P}(\Omega)(c(\sigma(j))\alpha_{\sigma(j)} \otimes \zeta_i) \equiv &(2g_C+1)N \left(\int_{C \setminus \gamma}  \log(f) dx_{j}  \wedge dz_i    + 2\pi i \int_{\gamma}  dx_j d z_j \right)\\
\end{align*}

On the other hand, from  Propostion \ref{wd1}  one has 
\begin{align*}
{\bf F}_{QR,P}(\Omega)(\zeta_i \otimes c(\sigma(j))\alpha_{\sigma(j)})&=(2g_C+1)N\left(\int_{C\setminus \gamma}  \log(f) dz_i \wedge  dx_j  + 2\pi i \int_{\gamma}  dz_i dx_j \right)\\
&=(2g_C+1)N\left(-\int_{C \setminus \gamma}  \log(f) dx_j \wedge  dz_i  + 2\pi i \int_{\gamma}  dz_idx_j \right)
\end{align*}
Therefore we get 
\begin{align*}
{\bf F}_{QR,P}(\Omega)(c(\sigma(j))\alpha_{\sigma(j)} \wedge \zeta_i) \equiv & {\bf F}_{QR,P}(\Omega)(c(\sigma(j))\alpha_{\sigma(j)} \otimes \zeta_i) - {\bf F}_{QR,P}(\Omega)(\zeta_i \otimes c(\sigma(j))\alpha_{\alpha_{\sigma(j)}})\\
\equiv & (2g_C+1)N\left(2\int_{C \setminus \gamma}  \log(f) dx_{j}  \wedge dz_i    + 2\pi i \int_{\gamma} (dx_j dz_i - dz_i dx_j)\right) \\
\end{align*}
On the other hand, from Theorem \ref{regform}
$$(2g_C+1)N\reg_{\ZZ}(Z_{QR})( dx_j\wedge dz_i) =(2g_C+1)N\left(2\int_{C \setminus \gamma}  \log(f) dx_{j}  \wedge dz_i    + 2\pi i \int_{\gamma} (dx_jdz_i-dz_i dx_j)\right)$$
\end{proof}

Recall that we have assumed in both cases  that $f_{QR}(P)=1$. If we do not make that assumption, then one has a term corresponding to a decomposable element that one has to account for. However, if we work modulo the decomposable cycles we can ignore that term.   

As a result of this theorem, we get the following expression of the regulator as an iterated integral over a loop - which is more amenable to computation. 

\begin{cor}

Let $Z_{QR,P}$ be the element of $H^{2g-1}_{\M}(\J(C),\ZZ(g))$ and let $\eta$ and $\omega$  be two closed $1$ forms on $C$ with $\omega$ holomorphic. Let $\alpha$ be loop in $C_{QR}$ based at $P$ such that $[\alpha] \in V$ and a   Poincar\'e dual of $[\alpha]$ is homologous to $\eta$ in $H^1(C_{QR})$. Then 

%
$$\reg_{\ZZ}(Z_{QR,P})(\eta \wedge \omega)  \equiv 2(2g_C+1)N \int_{\alpha} \frac{df}{f}\omega.$$
\end{cor}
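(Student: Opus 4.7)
The plan is to derive the corollary directly from Theorem~\ref{mainthm} together with the iterated-integral description of the Carlson representative provided by Lemma~\ref{fqrplemma}, using crucially Proposition~\ref{wdz}. By the main theorem, $\reg_\ZZ(Z_{QR,P})(\eta\wedge\omega)$ equals, in the intermediate Jacobian, $\mathbf{F}_{QR,P}(\Omega)(\eta\wedge\omega)/((2g_C+1)N)$, so the task reduces to computing the right-hand side as a multiple of the single iterated integral $\int_\alpha \frac{df}{f}\omega$.

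First, I would decompose $\eta\wedge\omega = \eta\otimes\omega - \omega\otimes\eta$ in $\otimes^2 H^1(C)$ and evaluate $\mathbf{F}_{QR,P}(\Omega)$ on each tensor summand via Lemma~\ref{fqrplemma}. For the summand $\eta\otimes\omega$, the holomorphic $\omega$ sits in the second slot, and Proposition~\ref{wdz} immediately kills the $W$-correction, so this piece reduces cleanly to a scalar multiple of $\int_\alpha \frac{df}{f}\omega$, where $\alpha\in V$ is the loop whose Poincar\'e dual is cohomologous to $\eta$ in $H^1(C_{QR})$. For the transposed summand $\omega\otimes\eta$, $W(\eta)$ does not vanish a priori; however, after translating the iterated integral to a surface integral via Corollary~\ref{nocompact}, the spurious correction becomes $\int_{C\setminus\gamma}\omega\wedge W(\eta)$, and this vanishes because both factors are of Hodge type $(1,0)$. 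This is the same Hodge-theoretic cancellation that drives the proof of Proposition~\ref{colombolemma}, and it forces the two tensor orderings to contribute the same quantity up to sign, producing the wedge.

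Combining the two orderings, invoking Theorem~\ref{mainthm}, and carefully tracking the normalisations yields the claimed identity $\reg_\ZZ(Z_{QR,P})(\eta\wedge\omega) \equiv 2(2g_C+1)N\int_\alpha \frac{df}{f}\omega$ in the intermediate Jacobian. The main obstacle is the bookkeeping of signs and of the factor $(2g_C+1)N$, which is distributed among several identifications: the isomorphism $V \simeq H_1(C)$ via $i_N = \frac{1}{N}\circ i_*$, the Poincar\'e pairing on the open curve $C_{QR}$, and the passage from the holomorphic basis $dz_i$ to the twisted homology class $\zeta_i$. One must also verify that the latitude $2\pi i\int_\alpha\omega\,\ZZ$ arising in Corollary~\ref{nocompact} lies in the intermediate Jacobian lattice, so that the congruence $\equiv$ is well-defined; the shuffle relation of Lemma~\ref{basicproperties}(2), combined with $\int_\alpha \frac{df}{f}=0$ (which holds because $[\alpha]\in V = \ker f_*$), ensures that swapping the order of $\frac{df}{f}$ and $\omega$ in the iterated integral simply changes the sign, which is the origin of the factor of $2$ in the final formula.
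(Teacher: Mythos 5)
Your proposal is correct and follows essentially the same route as the paper: the two-slot decomposition, with Proposition \ref{wdz} ($W(\omega)=0$) handling the summand having $\omega$ in the second slot and the type-$(1,0)$ cancellation of Proposition \ref{colombolemma} handling the transposed summand, is exactly the content of the proof of Theorem \ref{mainthm}, and the paper's proof of the corollary then just converts the resulting expression back into a single iterated integral over $\alpha$ via Corollary \ref{nocompact}. One small correction to your bookkeeping: the factor of $2$ does not arise from swapping $\frac{df}{f}$ and $\omega$ over $\alpha$ using $\int_\alpha\frac{df}{f}=0$ --- the integral $\int_\alpha \omega\,\frac{df}{f}$ never actually appears in the computation. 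What makes the two tensor orderings contribute equal and opposite amounts is the shuffle relation of Lemma \ref{basicproperties}(2) applied over the cut $\gamma=f^{-1}([0,\infty])$ together with $\int_\gamma\omega=0$ (Lemma \ref{usefullemma}), which gives $\int_\gamma\eta\,\omega=-\int_\gamma\omega\,\eta$ and hence lets the two boundary terms $2\pi i\int_\gamma$ add rather than cancel; the surface-integral terms are negatives of each other simply by antisymmetry of the wedge product.
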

\begin{proof}  It suffices to check this for $\eta=dx_j$ and $\omega=dz_i$. From  Theorem \ref{mainthm}, one has 
$${\bf F}_{QR,P}(\Omega)(c(\sigma(j))\alpha_{\sigma(j)} \wedge \zeta_i) \equiv (2g_C+1) N \left(2 \int_{C \setminus \gamma}  \log(f) dx_j \wedge  dz_i + 2\pi i \int_{\gamma}  (dx_j dz_i -dz_i dx_j) \right).$$
From Lemma \ref{basicproperties} we have
$$\int_{\gamma} dx_j dz_i + \int_{\gamma} dz_i dx_j = \int_{\gamma} dx_j \int_{\gamma} dz_i$$
From Lemma \ref{usefullemma} one has  $\int_{\gamma} dz_i = 0$. Hence 
$$\int_{\gamma} dx_j dz_i = - \int_{\gamma} dz_i dx_j$$
Therefore, we can simplify the regulator expression to get 
$$\reg_{\ZZ}(Z_{QR,P})(dx_{j}\wedge dz_{i} )= 2(2g_C+1)N\left(\int_{C \setminus \gamma}  \log(f) dx_j  \wedge dz_i    + 2\pi i \int_{\gamma} dx_jdz_i \right)$$
$$=2(2g_C+1)N \left(\int_{C \setminus \gamma} dx_j \wedge \log(f)dz_i + 2\pi i \int_{\gamma} dx_j dx_i \right)$$
Using Corollary  \ref{nocompact}  this becomes 
$$\reg_{\ZZ}(Z_{QR,P})(dx_{j}\wedge dz_{i} )= 2(2g_C+1) c(j) N  \int_{\alpha} \frac{df}{f} dz_i.$$

\end{proof}

\begin{rem} Darmol-Rotger-Sols \cite{DRS} and Otsubo \cite{otsu2} have a similar formula for the regulator of the modified diagonal cycle. Their expression is an iterated integral of two holomorphic forms over the dual of a third forn - but none of the forms are logarithmic. Similarly, the regulator of an element of $K_2$ can be expressed as the iterated integral of two logarithmic forms over the dual of a third holomorphic form. 

\end{rem}

\bibliographystyle{alpha}
\bibliography{Extensions.bib}

 \begin{tabular}[t]{l@{\extracolsep{8em}}l} 
Ramesh Sreekantan  & Subham Sarkar \\
Statistics and Mathematics Unit & School of Mathematics\\
Indian Statistical Institute & Tata Institute of Fundamental Research\\
8th Mile, Mysore Road & 1 Homi Bhabha Road \\
Jnanabharathi & Colaba\\
Bengaluru 560 059 & Mumbai 400 005\\ 
rameshsreekantan@gmail.com  & subham.sarkar13@gmail.com
\end{tabular}

\end{document}